\documentclass[11pt]{amsart}

\usepackage[T1]{fontenc}
\usepackage[utf8]{inputenc}
\usepackage[american]{babel}
\usepackage[babel]{microtype}
\usepackage{amsfonts} % Standards
\usepackage{amsthm,amssymb,mathrsfs,setspace,pstricks}%amsmath, latexsym,footmisc
\usepackage{dynkin-diagrams}
\usepackage{amsmath}
\usepackage{booktabs}
\usepackage{array}
\usepackage[autostyle, english = american]{csquotes}
\MakeOuterQuote{"}
\usepackage{tikz}
\usepackage{tikz-cd}
\usetikzlibrary{positioning,arrows.meta}
\usetikzlibrary{calc}
\usetikzlibrary{decorations.pathreplacing}

\usepackage{mathrsfs}

\usepackage{mathtools}

\usepackage{dynkin-diagrams}

\usepackage{enumerate}

\usepackage{tikz}
\usetikzlibrary{calc}
\usetikzlibrary{decorations.pathreplacing}

\usepackage{xspace}

\usepackage{xcolor}
\usepackage{array}
\usepackage{colortbl}
\usepackage{booktabs}
\usepackage{multirow} 
\usepackage{tikz-cd}
\usepackage[noadjust]{cite}
\usepackage[hidelinks,bookmarksnumbered]{hyperref}
\usepackage[noabbrev,capitalise]{cleveref}

\usepackage{orcidlink}

\newtheorem{theorem}{Theorem}[section]
\newtheorem{lemma}[theorem]{Lemma}
\newtheorem{remark}[theorem]{Remark}
\newtheorem{proposition}[theorem]{Proposition}
\newtheorem{corollary}[theorem]{Corollary}

\DeclareMathOperator{\End}{End}
\DeclareMathOperator{\Der}{Der}
\DeclareMathOperator{\ad}{ad}
\DeclareMathOperator{\Ker}{Ker}

\DeclareMathOperator{\Hom}{Hom}

\newcommand{\ord}{\mathrm{ord}}
\newcommand{\rk}{\operatorname{rank}}
\newcommand{\cl}{\colon}
\newcommand{\bb}[1]{\mathbb{#1}}
\newcommand{\ft}{\mathfrak{t}}
\newcommand{\op}[1]{\operatorname{#1}}

\newcommand{\CC}{\mathbb{C}}
\newcommand{\Z}{\mathbb{Z}}

\newcommand{\frk}[1]{\mathfrak{#1}}
\newcommand{\g}{\mathfrak{g}}
\newcommand{\tg}{\tilde{\mathfrak{g}}}

\newcommand{\tbG}{\widetilde{\mathbf{G}}}

\newcommand{\bmu}{\boldsymbol{\mu}}

\newcommand{\h}{\mathfrak{h}}
\newcommand{\Aut}{\operatorname{Aut}}
\newcommand{\bAut}{\operatorname{\mathbf{Aut}}}
\newcommand{\Out}{\operatorname{Out}}
\newcommand{\Int}{\operatorname{Int}}
\newcommand{\Ad}{\operatorname{Ad}}
\newcommand{\Lie}{\mathscr{L}}
\newcommand{\bG}{\mathbf{G}}

\newcommand{\thistheoremname}{}
\newtheorem*{genericthm*}{\thistheoremname}
\newenvironment{namedthm*}[1]
{\renewcommand{\thistheoremname}{#1}%
\begin{genericthm*}}
{\end{genericthm*}}

\DeclareFontFamily{U}{mathx}{}
\DeclareFontShape{U}{mathx}{m}{n}{<-> mathx10}{}
\DeclareSymbolFont{mathx}{U}{mathx}{m}{n}
\DeclareMathAccent{\widehat}{0}{mathx}{"70}
\DeclareMathAccent{\widecheck}{0}{mathx}{"71}

\theoremstyle{definition}
\newtheorem{definition}[theorem]{Definition}

\newtheorem{example}[theorem]{Example}

\AddToHook{env/lemma/begin}{\crefalias{theorem}{lemma}}
\AddToHook{env/corollary/begin}{\crefalias{theorem}{corollary}}
\AddToHook{env/proposition/begin}{\crefalias{theorem}{proposition}}
\AddToHook{env/definition/begin}{\crefalias{theorem}{definition}}
\AddToHook{env/remark/begin}{\crefalias{theorem}{remark}}
\AddToHook{env/example/begin}{\crefalias{theorem}{example}}
\AddToHook{env/problem/begin}{\crefalias{theorem}{problem}}

\usepackage{tikz}
\usetikzlibrary{arrows.meta}
\tikzset{
  dynnode/.style={circle, draw, fill=white, inner sep=0pt, minimum size=7.5pt},
  dynedge/.style={line width=0.75pt},
  dynDoubleRight/.style={double distance=1.4pt, line width=0.3pt, -{Latex[length=1.7mm,width=1.3mm]}},
  dynDoubleLeft/.style={double distance=1.4pt, line width=0.3pt, {Latex[length=1.7mm,width=1.3mm]}-}
}

\title[Gradings by cyclic groups on classical simple Lie algebras]{Gradings by cyclic groups on classical simple Lie algebras in prime characteristics}
\author{Mikhail Kochetov
\orcidlink{https://orcid.org/0000-0002-0427-7926}
}
\email{mikhail@mun.ca}
\address{Department of Mathematics and Statistics, Memorial University, St. John's, NL, A1C5S7, Canada}
\thanks{We acknowledge the support of the Natural Sciences and Engineering Research Council of Canada (NSERC), RGPIN-2018-04883. Cette recherche a été financée par le Conseil de recherches en sciences naturelles et en génie du Canada (CRSNG), RGPIN-2018-04883.}

\author{Vishal Yadav
\orcidlink{https://orcid.org/0000-0003-0038-7949}
}
\email{vkyadav@mun.ca}
\address{Department of Mathematics and Statistics, Memorial University, St. John's, NL, A1C5S7, Canada}
\thanks{This paper is based on the second author's Ph.D. thesis, written under the supervision of of the first author. The second author acknowledges support by a graduate fellowship from Memorial University, Canada.}

\subjclass[2020]{Primary 17B70; Secondary 17B40, 20G15, 14L15}

\keywords{Gradings; classical Lie algebras; algebraic groups; group schemes}

\begin{document}

\begin{abstract} \footnotesize
We classify, up to isomorphism, gradings by finite cyclic groups on classical simple Lie algebras \(\g\) over an algebraically closed field of arbitrary characteristic. Using the smoothness of the automorphism group scheme \(\op{\mathbf{Aut}}\g\) and correspondence between \(\mathbb Z_m\)-gradings and morphisms \(\boldsymbol{\mu}_m\to\operatorname{\mathbf{Aut}}\g\), we express the classification as an orbit problem for certain Weyl-type groups. More generally, for the affine group scheme \(\mathbf G\) associated to a semisimple algebraic group \(G\) and the constant group scheme \(\mathbf \Gamma_0\) associated to a subgroup \(\Gamma_0\) of the automorphism group of the based root datum of \(G\), we consider the classification of morphisms \(\boldsymbol{\mu}_m\to \mathbf G \rtimes \mathbf\Gamma_0\) up to conjugation by \(G \rtimes \Gamma_0\). We show that the classification in characteristic \(p\) is the same as in characteristic \(0\) except that, in characteristic \(p\), only elements of \(\Gamma_0\) whose order is prime to \(p\) can occur. For \(\Z_m\)-gradings on \(\g\), this extends the classification by Kac coordinates to arbitrary characteristic, with the caveat that only diagram automorphisms of order prime to \(p\) are allowed and, if \(p=2\) or \(3\), the type of \(\Aut\g\) is not always the same as the type of \(\g\).
\end{abstract}

\maketitle

\section{Introduction}

Let \(\mathcal A\) be an algebra, not necessarily associative, and let \(H\) be an abstract group, written multiplicatively.
A \emph{grading by} \(H\), or an \emph{\(H\)-grading}, on \(\mathcal A\) is a decomposition
\(\Gamma\cl \mathcal A=\bigoplus_{g\in H}\mathcal A_g\)
such that \(\mathcal A_g\mathcal A_h\subseteq\mathcal A_{gh}\) for all
\(g,h\in H\). The subspace \(\mathcal A_g\) is called the
\emph{homogeneous component} of degree \(g\). 
Two \(H\)-gradings,
\(\Gamma\cl\mathcal A=\bigoplus_{g\in H}\mathcal A_g\) and
\(\Gamma'\cl\mathcal A=\bigoplus_{g\in H}\mathcal A'_g\),
are called \emph{isomorphic} if there exists an algebra automorphism 
\(\psi\cl\mathcal A\to\mathcal A\) such that
\(\psi(\mathcal A_g)=\mathcal A'_g\) for every \(g\in H\).
We say that \(\Gamma'\cl\mathcal A=\bigoplus_{g'\in H'}\mathcal A'_{g'}\) is a \emph{coarsening} of \(\Gamma\) (or \(\Gamma\) is a \emph{refinement} of \(\Gamma'\)) if for any \(g\in H\) there exists \(g'\in H'\) such that \(\mathcal A_g\subseteq\mathcal A'_{g'}\).

In this paper, we consider gradings by  \(\mathbb Z_m:=\mathbb Z/m\mathbb Z\) on a classical simple Lie algebra \(\g\) over an algebraically closed field \(K\).

If \(K\) contains a primitive \(m\)-th root of unity \(\zeta\), then a
\(\mathbb Z_m\)-grading
\(\Gamma\cl\mathcal{A}=\bigoplus_{\bar k\in\mathbb Z_m}\mathcal A_{\bar k}\)
determines an automorphism
\(\theta_\Gamma\in\operatorname{Aut}\mathcal A\) by
\(\theta_\Gamma(x)=\zeta^k x\) for \(x\in\mathcal A_{\bar k}\).
Conversely, any automorphism \(\theta\) satisfying
\(\theta^m=\operatorname{id}\) is semisimple, and its eigenspace decomposition
defines a \(\mathbb Z_m\)-grading. Under this correspondence, isomorphism
classes of gradings correspond to conjugacy classes of such automorphisms.

Kac~\cite{Kac} classified finite-order automorphisms of a finite-dimensional simple Lie algebra \(\g\) over \(\bb{C}\), and
hence gradings by finite cyclic groups, in terms of affine Dynkin diagrams and
the coordinates that now bear his name (see also~\cite{vic}). An alternative proof, not using infinite-dimensional Lie algebras and following a method of Gantmakher~\cite{gan}, is presented by Onishchik and Vinberg~\cite[\S\S3.8--3.11]{Vin}. 
It shows that, up to conjugation, \(\theta\) is contained in a certain quasitorus \(S^{(\sigma)}\subset\Aut\g\), where \(\sigma\) is a diagram automorphism of \(\g\) of order \(q\mid m\), and recasts the classification as an orbit problem for a Weyl-type group, \(W^{(\sigma)}\), acting on \(S^{(\sigma)}\). 

In characteristic \(p>0\), the reformulation of gradings in terms of finite-order automorphisms extends only to \(\Z_m\)-gradings with \(p\nmid m\). If \(p\mid m\), however, finite-order automorphisms detect only the prime-to-\(p\) part of a \(\mathbb Z_m\)-grading.
Serre~\cite{ser} observed that, if we replace elements of order \(m\) in a simply connected simple algebraic group \(G\) by embeddings of the affine group scheme of $m$-th roots of unity, $\boldsymbol{\mu}_m$, into the 
affine group scheme $\mathbf G$ associated to \(G\), then the classification up to conjugation becomes characteristic-independent and can be expressed in terms of Kac coordinates. 
For an adjoint simple algebraic group \(G\) and $A=G \rtimes \Gamma$, where \(\Gamma\) is the group of diagram automorphisms of \(G\), it was asserted by Reeder et al.~\cite[\S1.3]{reeder-levy-yu} that the classification of embeddings \(\boldsymbol{\mu}_m \to \mathbf{A}\) in terms of Kac coordinates holds in characteristic \(p\) assuming that the projection of \(\boldsymbol{\mu}_m\) to \(\mathbf\Gamma\) has order not divisible by \(p\). A proof of this was sketched by Levy in~\cite[\S2]{levy}, but only in the case $p\nmid m$. 

The purpose of this paper is to classify, up to isomorphism, cyclic gradings on a classical simple Lie algebra \(\g\) in arbitrary 
characteristic. This is equivalent to the classification of morphisms \(\boldsymbol{\mu}_m \to \mathbf{A}\) where \(\mathbf{A}=\op{\mathbf{Aut}}\g\), the automorphism group scheme of \(\g\), up to conjugation by the automorphism group \(A=\mathbf{A}(K)\). This latter was computed by Steinberg~\cite{Rob} and is isomorphic to \(G\rtimes\Gamma\) for an adjoint simple algebraic group \(G\), which has the same type as \(\g\) except in some cases in characteristic \(2\) and \(3\), see~\Cref{tab:exceptional-aut-groups}.
We show that in all cases \(\mathbf{A}\)
% , the affine group scheme associated to \(A=\Aut\g\), is smooth,
is smooth, i.e., it is the affine group scheme associated to \(A=\Aut\g\), 
and consider, more generally, the affine group schemes associated to (possibly disconnected) reductive algebraic groups \(A\).
%such that the identity component \(G=A^\circ\) is semisimple and contains its centralizer in \(A\), which implies that the connected components of \(A\) are indexed by the elements of a subgroup \(\Gamma_0\) of the automorphism group \(\Gamma\) of the Dynkin diagram of \(G\).

We first use a version of Gantmakher's approach to show that the conjugacy problem for semisimple elements contained in a connected component \(A^{(\sigma)}\) of \(A\) is equivalent to the orbit problem for the action of a Weyl-type group, \(W^{(\sigma)}\), on  a certain quasitorus \(S^{(\sigma)}\) associated to any \(\sigma\) in the component group \(\Gamma_0\) of \(A\).
The same result for the case \(A=G\rtimes\Gamma_0\) with semisimple \(G\) was obtained by Mohrdieck \cite{HamburgFoldingThesis}, using a different method, under the assumptions that \(\operatorname{char}K\neq 2\), and that \(K\) is not an algebraic closure of a finite field. Then we show that, over any algebraically closed field \(K\), any morphism \(\boldsymbol{\mu}_m \to \mathbf{A}\) is \(G\)-conjugate to one whose image is contained in \(\mathbf{S}^{(\sigma)}\), for some \(\sigma\in\Gamma_0\) of order \(q\) dividing \(m\) and not divisible by \(\operatorname{char}K\), and reduce the conjugacy problem for such morphisms to the classification of \(W^{(\sigma)}\)-orbits in \(\Hom_\Z(\frk X(S^{(\sigma)}),\Z_m)\), where \(\frk{X}\) denotes the character group. In the case \(A=G\rtimes\Gamma_0\), we determine the structure of \(W^{(\sigma)}\) and its action on \(\frk X(S^{(\sigma)})\), which shows that they are independent of \(K\). Taking \(G\) simple of adjoint type and \(\Gamma_0=\Gamma\), this recovers the classification of \(\Z_m\)-gradings on classical simple Lie algebras in terms of Kac coordinates. 

The paper is organized as follows. Section~2 recalls the correspondence
between gradings by abelian groups and actions of their Cartier duals, reviews the construction of classical simple Lie algebras, and discusses the fixed-point subgroups of quasitori under an automorphism.
In~\Cref{sec:outer-components-general}, we study semisimple conjugacy classes in the
connected components \(A^{(\sigma)}\) of  \(A\) as above. We attach to each \(\sigma\in\Gamma_0\) of 
order not divisible by \(\operatorname{char}K\) a quasitorus \(S^{(\sigma)}\), its component \(M^{(\sigma)}\), and a finite group \(W^{(\sigma)}\), and prove that 
every semisimple element of \(\sigma G\) is \(G\)-conjugate to an element of \(M^{(\sigma)}\), with conjugacy inside \(M^{(\sigma)}\) governed by \(W^{(\sigma)}\) (see~\Cref{thm:sem-conj-outer-reorganized}).
Then, in~\Cref{sec:gen-root-decomp}, we consider the grading of the tangent Lie algebra \(\mathscr{L}(A)\) determined by the quasitorus \(S^{(\sigma)}\), which shares several properties with the root space decomposition (corresponding to \(\sigma=\mathrm{id}\)) and is called the \emph{\(\sigma\)-root decomposition} by Vinberg and Onishchik (see~\Cref{thm:sigma-root-properties-rewritten}).
We also construct a characteristic-independent  model for \(W^{(\sigma)}\) and its action  (see~\Cref{prop:full-lattice-sigma-weyl-action}) in the case \(A=G\rtimes\Gamma_0\). The scheme-theoretic reductions needed to handle the \(p\)-primary part of \(\boldsymbol{\mu}_m\) are established
in~\Cref{sec:lem-on-group-schemes} and then combined
in~\Cref{sec:mu-embeddings} with the results of~\Cref{sec:outer-components-general} to classify morphisms
\(\boldsymbol{\mu}_m\to\mathbf A\) up to \(G\)-conjugacy (see~\Cref{thm:block-reduction-Weyl-orbits,thm:final-classification-mu-embeddings}) and \(A\)-conjugacy (see~\Cref{prop:A-vs-G-fixed-outer-block}).~\Cref{sec:classical-simple-lie-algebras} is dedicated to classical simple Lie algebras. First we determine their automorphism group schemes (see \Cref{thm:aut-g-smooth,thm:aut-group-scheme-classical}), which 
allows us to apply the classification of morphisms \(\boldsymbol{\mu}_m\to\mathbf A\) in the case \(A=\Aut\g\) to classify \(\Z_m\)-gradings on a classical simple Lie algebra \(\g\) (see~\Cref{thm:cyclic-gradings-final-theorem}) and present them as coarsenings of the \(\sigma\)-root decompositions of \(\mathscr{L}(A)\) restricted to its ideal \(\g\).

Throughout the paper, the ground field \(K\) is assumed to be algebraically closed but of arbitrary characteristic, unless stated otherwise.

\section{Preliminaries}

\subsection{Gradings and actions}\label{sec:duality}
Given a \(G\)-grading \(\Gamma:\,\mathcal A=\bigoplus_{g\in G}\mathcal A_g\), every character \(\chi\in\widehat G:=\operatorname{Hom}(G,K^\times)\) acts on $\mathcal A\) by \(\chi\cdot a := \chi(g)a\) for  \(a\in\mathcal A_g\), extended \(K\)–linearly. This yields an automorphism of \(\mathcal A\) as a \emph{graded} algebra, since each homogeneous component $\mathcal A_g$ is preserved and $\chi$ restricts to the scalar operator $\chi(g)\operatorname{id}_{\mathcal A_g}$. Consequently, the grading \(\Gamma\) determines a homomorphism $\eta_\Gamma:\ \widehat G \rightarrow \Aut\mathcal A$ which sends $\chi \longmapsto [a\mapsto \chi\cdot a].$
When \(G\) is abelian and \(K\) is algebraically closed of characteristic $0\), characters separate points of \(G\), so \(\Gamma\) can be recovered as the simultaneous eigenspace decomposition for the commuting family $\eta_\Gamma(\widehat G)$:
\begin{equation}\label{homog-comp-deg-g}
    \mathcal A_g
=\bigl\{\,a\in\mathcal A\ \big|\ \eta_\Gamma(\chi)(a)=\chi(g)\,a\ \text{for all } \chi\in\widehat G\,\bigr\}.
\end{equation}
For example, for the $\mathbb Z^n$-grading on $M_n(K)$ defined by assigning the matrix unit \(E_{ij}\) degree \(\varepsilon_i-\varepsilon_j\), where \(\{\varepsilon_1,\ldots,\varepsilon_n\}\) is the standard basis of \(\Z^n\), the associated homomorphism is given by $(K^\times)^n \rightarrow \Aut\bigl(M_n(K)\bigr),
$ sending $(\lambda_1,\dots,\lambda_n)$ to the inner automorphism \(\operatorname{Int}\bigl(\operatorname{diag}(\lambda_1,\dots,\lambda_n)\bigr)\).

If \(\mathcal A\) is finite-dimensional over an algebraically closed field \(K\), then \(\Aut\mathcal A\) is an algebraic group (see e.g.~\cite[\S7.6]{Wat}). 
For \(G\) a finitely generated abelian group, the character group $\widehat G$ is a diagonalizable algebraic group ~\cite[\S16]{Hum}, isomorphic to a product of a torus with a finite abelian group of order not divisible by $\operatorname{char}K$. We will refer to algebraic groups of this form as \emph{quasitori}.
For any \(G\)-grading, $\eta_\Gamma$ is a morphism of algebraic groups. Conversely, let $\eta:\widehat G\to \Aut\mathcal A$ be a morphism of algebraic groups. Its image consists of commuting diagonalizable automorphisms, hence determines a simultaneous eigenspace decomposition of \(\mathcal A\) indexed by the group \(\frk X(\widehat{G})\) of algebraic group homomorphisms $\widehat G\to K^\times$. When $\operatorname{char}K=0\) (or $\operatorname{char}K=p$ and \(G\) has no $p$-torsion), \(\frk X(\widehat{G})\) is canonically isomorphic to \(G\); defining
$\mathcal A_g$ by Equation \eqref{homog-comp-deg-g} produces a \(G\)-grading \(\Gamma\) on \(\mathcal A\) with $\eta=\eta_\Gamma$.

If the base field \(K\) is not assumed to be algebraically closed, or if 
$\operatorname{char} K$ is positive, the preceding one-to-one correspondence is formulated in the language of affine group schemes over \(K\), which can be defined as representable functors from the category of commutative associative unital \(K\)-algebras to the category of groups (for general background, we refer the reader to \cite{Wat}). For each such algebra $\mathcal R$, the Cartier dual $G^D$ of an abelian group \(G\) is defined by $G^D(\mathcal R) := \Hom(G,\mathcal R^{\times})$, and the automorphism group scheme $\op{\mathbf{Aut}}\mathcal A$ 
 is defined by \(\op{\mathbf{Aut}}\mathcal A(\mathcal R):= \Aut_{\mathcal R}(\mathcal A \otimes_{K} \mathcal R)\).
Then a \(G\)-grading \(\Gamma\) on \(\mathcal A\) yields a  homomorphism of groups \(\left(\eta_{\Gamma}\right)_{\mathcal{R}}: \Hom(G,\mathcal R^{\times}) \rightarrow \operatorname{Aut}_{\mathcal{R}}(\mathcal{A} \otimes \mathcal{R})\) defined by
\[\left(\eta_{\Gamma}\right)_{\mathcal{R}}(f)(x \otimes r)=x \otimes f(g) r \text{ for all } x \in \mathcal{A}_g, g \in G, r \in \mathcal{R},f\in\Hom(G,\mathcal R^{\times}).\]
Thus, we obtain a morphism of affine group schemes \(\eta_{\Gamma}:G^{D}\rightarrow \operatorname{\mathbf{Aut}}\mathcal{A}\).

Conversely, a morphism $\eta: G^D \rightarrow \operatorname{\mathbf{Aut}}\mathcal{A}$ gives rise to a \(G\)-grading \(\Gamma\) on the algebra \(\mathcal{A}\) such that $\eta_{\Gamma}=\eta$.

The following proposition summarises the preceding correspondence (see e.g. \cite[\S 1.4]{Alb}.
\begin{proposition} Let \(G\) be an abelian group and \(\mathcal{A}\) a finite-dimensional algebra over an arbitrary field. Then \(G\)-gradings on \(\mathcal{A}\) are in one-to-one correspondence with morphisms of affine group schemes \(G^D \rightarrow \op{\mathbf{Aut}}\mathcal{A}\). Two \(G\)-gradings are isomorphic if and only if the corresponding morphisms are conjugate by an element of \(\Aut\mathcal{A}\). 
\end{proposition}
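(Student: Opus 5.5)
The plan is to pass through the Hopf algebra of \(G^D\) and use Yoneda. The coordinate algebra of \(G^D\) is the group algebra \(KG\): an \(\mathcal R\)-point \(f\in\Hom(G,\mathcal R^\times)\) is the same thing as a unital algebra homomorphism \(KG\to\mathcal R\), \(g\mapsto f(g)\). The comultiplication is \(\Delta(g)=g\otimes g\). Since \(\mathcal A\) is finite-dimensional, \(\op{\mathbf{Aut}}\mathcal A\) is an affine group scheme, namely a closed subgroup scheme of \(\mathbf{GL}(\mathcal A)\). Morphisms \(G^D\to\op{\mathbf{Aut}}\mathcal A\) therefore correspond to \(\mathcal R\)-linear actions of \(G^D(\mathcal R)\) on \(\mathcal A\otimes\mathcal R\) by algebra automorphisms, natural in \(\mathcal R\).

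The first step is to identify these with algebra maps \(\rho\colon\mathcal A\to\mathcal A\otimes KG\) that are \(KG\)-comodule structures. By Yoneda, a morphism \(G^D\to\mathbf{GL}(\mathcal A)\) of functors is determined by the image of the universal point \(\mathrm{id}\in G^D(KG)\). That image is a \(KG\)-linear automorphism of \(\mathcal A\otimes KG\), which is the same as a \(K\)-linear map \(\rho\colon\mathcal A\to\mathcal A\otimes KG\). The conditions transfer as follows:
\begin{itemize}
\item compatibility with multiplication in \(G^D\) is coassociativity, \((\rho\otimes\mathrm{id})\rho=(\mathrm{id}\otimes\Delta)\rho\), together with counitality;
\item landing in \(\op{\mathbf{Aut}}\mathcal A\) means \(\rho\) is multiplicative.
\end{itemize}
Each condition can be checked on the universal point and then holds for every \(\mathcal R\) by naturality.

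The second step is the standard fact that \(KG\)-comodules are exactly \(G\)-graded vector spaces. Write \(\rho(x)=\sum_g x_g\otimes g\). Coassociativity gives \((x_g)_h=\delta_{gh}x_g\), and counitality gives \(x=\sum_g x_g\). Hence \(\mathcal A_g:=\{x:\rho(x)=x\otimes g\}\) defines a vector space grading. Multiplicativity of \(\rho\) is exactly \(\mathcal A_g\mathcal A_h\subseteq\mathcal A_{gh}\). Conversely, a grading gives \(\rho(x)=x\otimes g\) for \(x\in\mathcal A_g\), and on \(\mathcal R\)-points this is precisely the formula for \((\eta_\Gamma)_{\mathcal R}\) given above. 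So the two constructions are mutually inverse, and this proves the bijection.

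The third step is isomorphism versus conjugation. If \(\psi\in\Aut\mathcal A\), view it as a constant point of \(\op{\mathbf{Aut}}\mathcal A\) via \(\psi\otimes\mathrm{id}_{\mathcal R}\). Conjugating \(\eta_\Gamma\) by \(\psi\) corresponds to the comodule map \((\psi\otimes\mathrm{id})\rho\psi^{-1}\). Its degree-\(g\) component is \(\psi(\mathcal A_g)\), so \(\psi\eta_\Gamma\psi^{-1}=\eta_{\Gamma'}\) if and only if \(\psi(\mathcal A_g)=\mathcal A'_g\) for all \(g\).

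The only delicate point is the Yoneda step, in particular checking that \(\op{\mathbf{Aut}}\mathcal A\) is representable and that all axioms can be verified on the universal point. The rest is bookkeeping. The hypothesis that the field be algebraically closed is never used.
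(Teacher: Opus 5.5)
Your proposal is correct and follows essentially the route the paper relies on. The paper gives no argument of its own beyond the explicit formula for \((\eta_\Gamma)_{\mathcal R}\) and defers to \cite[\S 1.4]{Alb}, where \(G\)-gradings are identified with \(KG\)-comodule algebra structures \(\mathcal A\to\mathcal A\otimes KG\), and these, via \(K[G^D]=KG\) (which uses that \(G\) is abelian, so \(KG\) is commutative), with morphisms \(G^D\to\op{\mathbf{Aut}}\mathcal A\); there, conjugation by \(\psi\in\Aut\mathcal A\) corresponds to replacing each \(\mathcal A_g\) by \(\psi(\mathcal A_g)\), just as in your argument.
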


Since we assume that \(K\) is algebraically closed, we can identify a smooth algebraic group 
scheme with the algebraic group of its \(K\)-points. For a finite-dimensional algebra 
\(\mathcal A\), the automorphism group scheme \(\op{\mathbf{Aut}}\mathcal A\) 
is algebraic, while the Cartier dual \(G^D\) of an abelian group \(G\) is algebraic 
if and only if \(G\) is finitely generated. Smoothness is automatic in characteristic
\(0\), but may fail when \(\operatorname{char}K=p>0\). In particular, \(G^D\) is smooth if and only if \(G\) has no \(p\)-torsion. Moreover, \(\operatorname{\mathbf{Aut}}\mathcal A\) is smooth if and only if the Lie algebra of the algebraic group \(\operatorname{Aut}\mathcal A=\operatorname{\mathbf{Aut}}\mathcal A(K)\) 
coincides with \(\operatorname{Der}\mathcal A\), the Lie algebra of the group scheme \(\operatorname{\mathbf{Aut}}\mathcal A\); in general, the former is contained in the latter. Regardless of smoothness, \(G^D\) is
diagonalizable, and centralizers of diagonalizable subgroupschemes in smooth
group schemes are smooth; see, e.g.,~\cite[Exp.~XI,~Cor.~2.4]{sga3}, cf.~\cite[\S18.4]{Hum}.

\begin{remark}\label{rem:reduced-homomorphism-to-group-scheme}
If \(K\) is algebraically closed, then any affine group scheme \(\mathbf{A}\) has the largest smooth subgroupscheme, namely the subgroupscheme defined by the ideal \(I\) of the representing Hopf algebra \(K[\mathbf{A}]\) consisting of all nilpotent elements.  
If \(\mathbf A\) is algebraic, then this is the affine group scheme associated to the algebraic group \(A:=\mathbf{A}(K)\). If \(B\) is another affine algebraic group and \(\mathbf B\) is the associated smooth affine group scheme (represented by the coordinate algebra \(K[B]\)), then any morphism of affine algebraic groups \(f\cl B\to A\) induces a morphism of affine group schemes
\(\bar f \cl \mathbf{B}\to\mathbf{A}\) such that \(\bar f_K=f\). Indeed, precomposing the comorphism \(f^*\cl K[A]\to K[B]=K[\mathbf{B}]\) with the quotient map \(K[\mathbf{A}] \to K[A]=K[\mathbf{A}]/I\) gives a Hopf algebra homomorphism 
\(K[\mathbf{A}]\to K[\mathbf{B}]\), and hence a morphism \(\mathbf{B}\to\mathbf{A}\).
\end{remark}

We will need the following general fact: given a grading \(\Gamma\cl\mathcal A=\bigoplus_{g\in G}\mathcal A_g\) by a group \(G\) on an algebra \(\mathcal A\), any group homomorphism \(f\cl G\to G'\) induces a \(G'\)-grading \({}^f\Gamma\) on \(\mathcal A\) with homogeneous components \(\mathcal A'_{g'}:=\bigoplus_{g\in f^{-1}(g')}\mathcal A_g\) for any \(g'\in G'\). Clearly, \({}^f\Gamma\) is a coarsening of \(\Gamma\).

\begin{example}\label{ex:Cartan_grading}
For any finite-dimensional algebra $\mathcal{A}$,
a maximal torus $T$ of $\Aut\mathcal{A}$ gives a subgroupscheme $\mathbf{T}$ of $\mathbf{Aut}\mathcal{A}$. It also defines a grading on $\mathcal{A}$ by the free abelian group $\Lambda=\frk{X}(T)$. Since maximal tori are conjugate, this grading is unique up to isomorphism and called the \emph{Cartan grading} of $\mathcal{A}$. If $\Gamma$ is any grading on $\mathcal{A}$ by an abelian group $G$ such that the image of $\eta_\Gamma\cl G^D\to\mathbf{Aut}\mathcal{A}$ is contained in a torus (which is always the case if $G$ is torsion-free), then $\Gamma$ is isomorphic to a coarsening of the Cartan grading, induced by a group homomorphism $\Lambda\to G$. Two such coarsenings are isomorphic if and only if the homomorphisms $\Lambda\to G$ are conjugate by the action of the normalizer of $T$ in $\Aut\mathcal{A}$ (see \cite[Prop.~4.22]{Alb}).   
\end{example}

For instance, if $\g$ is a finite-dimensional simple Lie algebra over $\mathbb{C}$ with root system $\Phi$, then $\Z$-gradings on $\g$ are classified up to isomorphism by the orbits in $\Hom(\Z\Phi,\Z)$ under the action of the automorphism group of $\Phi$. If $\Pi$ is a base of $\Phi$, then each orbit has a representative that sends the elements of $\Pi$ to nonnegative integers, and two such assignments of integers yield isomorphic $\Z$-gradings if and only if one can move from one to the other by a symmetry of the Dynkin diagram. We will follow a similar approach for $\Z_m$-gradings on classical simple Lie algebras over any algebraically closed field, but we will have to replace the maximal torus of $\Aut\g$ by certain quasitori.

\subsection{Classical simple Lie algebras}\label{sec:classical}

We recall the construction of the classical simple Lie algebras over an
arbitrary field \(K\), fixing notations that will be used later. 

Let \(\Phi\) be an irreducible root system of rank \(\ell\), with base
\(\Pi=\{\alpha_1,\dots,\alpha_\ell\}\). Choose a Chevalley basis
\(\{h_1,\dots,h_\ell\}\cup\{x_\alpha\mid\alpha\in\Phi\}\) of the corresponding simple complex Lie algebra \(\g_\CC\). Let \(\mathfrak g_{\mathbb Z}\) be its \(\mathbb Z\)-span, and \(\mathfrak{h}_{\mathbb{Z}}\) be the \(\mathbb{Z}\)-span of the basis elements 
\(\{h_1, \ldots, h_{\ell}\}\). 
The resulting \(\mathbb Z\)-form is independent, up to isomorphism, of the chosen Chevalley basis; see~\cite[\S25.4]{James}. Set
\(\tilde{\g}_K:=\mathfrak g_{\mathbb Z}\otimes_{\mathbb Z}K\) and \(\tilde{\h}_K:=\h_{\Z}\otimes_{\Z} K\). When no confusion can arise, we omit the subscript \(K\).

For \(\alpha\in\Phi\), let
\(\bar\alpha\in\tilde{\h}^{\,*}\) be defined by
\(\bar\alpha(h_i)=\langle\alpha,\alpha_i^\vee\rangle\), where the Cartan
integer is regarded as an element of \(K\). 

The algebra \(\tilde{\mathfrak{g}}_{K}\) is not necessarily simple. Its simplicity depends on the characteristic of the field \(K\). Specifically, the quotient of
\(\tilde{\g}\) by its center is simple under the following conditions:
\(\operatorname{char} K \neq 2\) if \(\Phi\) contains roots of unequal lengths or is of type \(A_1\), and \(\operatorname{char} K \neq 3\) if \(\Phi\) is of type \(G_2\) (see~\cite[\S2.6]{Rob}). Under these assumptions, \(Z(\tilde{\g})=\{h\in\tilde{\h}\mid
\bar\alpha(h)=0\text{ for all }\alpha\in\Phi\}\), and we set  
\(\mathfrak g:=\tilde{\g}/Z(\tilde{\g})\) and \(\mathfrak h:=\tilde{\h}/Z(\tilde{\g})\), and continue to denote by the same symbols the images of \(h_i\) and \(x_\alpha\) as well as the functionals on \(\h\) induced by \(\bar\alpha\).

The Chevalley basis gives the usual root space decomposition of \(\g\), which is a grading by the root lattice \(\Z\Phi\):
\[
\mathfrak g=\mathfrak h\oplus\bigoplus_{\alpha\in\Phi}\g_\alpha
\] 
where \(\g_\alpha=Kx_\alpha\), \(\deg x_\alpha=\alpha\) and \(\mathfrak h\) has degree \(0\). 
In positive characteristic, distinct roots may induce the same functional on \(\mathfrak h\), so the eigenspace decomposition relative to \(\mathfrak{h}\) may be distinct from the above (a proper coarsening):
\begin{equation*}\label{4}
\mathfrak{g} = \mathfrak{h} \oplus \bigoplus_{\bar{\alpha} \in \bar{\Phi}} \mathfrak{g}_{\bar{\alpha}} , \quad \text{where} \quad \mathfrak{g}_{\bar{\alpha}} = \{ x \in \mathfrak{g} \mid [h, x] = \bar{\alpha}(h)x \text{ for all } h \in \mathfrak{h} \}.
\end{equation*}

%In characteristic \(0\), one has \(Z(\tilde{\mathfrak g})=0\), and this decomposition coincides with the root-lattice grading. 
%If \(\operatorname{char}K=p>0\), then \(\langle\bar{\Phi}\rangle\subseteq\mathfrak h^*\) is an elementary abelian \(p\)-group of rank at most \(\ell\), and the eigenspace decomposition may be a proper coarsening of the root-lattice grading.

The simple Lie algebras constructed in this way are called \emph{classical simple Lie algebras}. Under the stated restrictions on the
characteristic, their type is well defined: two such algebras are isomorphic if and only if their underlying irreducible root systems have the same type (see~\cite[\S8.1]{Rob}).

\subsection{Fixed point subgroups and their duals}

We next record a few consequences of the duality between quasitori over an algebraically closed field \(K\) and their character groups. For a quasitorus \(Q\) with character group \(X=\frk X(Q)\), each subgroup \(M\leq X\) defines the closed subgroup of \(Q\) on which all characters in \(M\) are trivial: 
\[
M^\perp:=\{q\in Q\mid \chi(q)=1\text{ for all }\chi\in M\}.
\] 
%equivalently \(M^\perp=\bigcap_{\chi\in M}\Ker(\chi)\subseteq Q\). 
If \(M\subseteq N\), then \(N^\perp\subseteq M^\perp\). Since \(K\) is algebraically closed, \(K^\times\) is a divisible abelian group, hence an injective \(\mathbb Z\)-module. Therefore, the short exact sequence of abelian groups $0\to M \xrightarrow{\iota} X \to X/M \to 0,$ where $\iota$ is the inclusion map, yields
\[0\to \Hom(X/M,K^\times) \to \Hom(X,K^\times) \xrightarrow{\iota^*} \Hom(M,K^\times)\to 0,\]
where $\iota^*$ is the restriction map. 
Since \(Q\) is a quasitorus, evaluation gives a canonical isomorphism \(Q\cong\Hom(X,K^\times)\), hence we obtain 
\begin{equation}\label{eq:A-perp-isom}
    M^\perp \cong \Ker\iota^* \cong \Hom(X/M,K^\times) \quad \text{and} \quad Q/M^\perp \cong \Hom(M,K^\times).
\end{equation} 
Define the \emph{radical} of \(M\) in \(X\) by
\(\sqrt M:=\{\chi\in X\mid n\chi\in M\ \text{for some}\ n\geq 1\}\).

\begin{proposition}\label{prop:perp-formalism}
Let $M,N$ be subgroups of $X.$ Then:
\begin{enumerate}
\item[(1)]\label{prop:perp1}
\((\sqrt{M})^\perp = (M^\perp)^\circ.\)
\item[(2)]\label{prop:perp2}
\((M+N)^\perp = M^\perp\cap N^\perp.\)
\end{enumerate}
\end{proposition}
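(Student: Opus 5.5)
Part (2) is immediate from the definitions, so I will dispose of it first. An element \(q\in Q\) lies in \((M+N)^\perp\) exactly when \(\chi(q)=1\) for every \(\chi=\mu+\nu\) with \(\mu\in M\), \(\nu\in N\). Since \(M,N\subseteq M+N\), the inclusion \((M+N)^\perp\subseteq M^\perp\cap N^\perp\) is clear. Conversely, if \(\mu(q)=\nu(q)=1\), then \((\mu+\nu)(q)=\mu(q)\nu(q)=1\), because characters multiply pointwise. This gives equality.

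For part (1), I will work through the canonical isomorphisms \eqref{eq:A-perp-isom}. The plan is:
\begin{enumerate}
\item[(a)] show that \((\sqrt M)^\perp\) is a connected closed subgroup of \(M^\perp\) of finite index;
\item[(b)] conclude that it coincides with \((M^\perp)^\circ\).
\end{enumerate}

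The inclusion \(M\subseteq\sqrt M\) gives \((\sqrt M)^\perp\subseteq M^\perp\). By \eqref{eq:A-perp-isom}, \((\sqrt M)^\perp\cong\Hom(X/\sqrt M,K^\times)\). The group \(X/\sqrt M\) is finitely generated, since \(X=\frk X(Q)\) is, and it is torsion-free by the definition of the radical. Hence \(X/\sqrt M\) is free abelian, so \(\Hom(X/\sqrt M,K^\times)\) is a torus and in particular connected. One must check that this abstract isomorphism is an isomorphism of algebraic groups, or at least a homeomorphism onto a closed subgroup. To do so I will use that \((\sqrt M)^\perp\) is the diagonalizable closed subgroup of \(Q\) whose character group is the restriction image \(X/\sqrt M\), via the standard anti-equivalence between quasitori and finitely generated abelian groups without \(p\)-torsion. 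A diagonalizable group whose character group is free abelian is a torus, hence connected.

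Next, \(M^\perp/(\sqrt M)^\perp\) is finite. Applying the exact sequence argument of \eqref{eq:A-perp-isom} to the subgroup \(\sqrt M/M\) of \(X/M\), this quotient is isomorphic to \(\Hom(\sqrt M/M,K^\times)\). The group \(\sqrt M/M\) is a finitely generated torsion group, hence finite, so its dual is finite. A closed connected subgroup of finite index in an algebraic group is its identity component: it is contained in the identity component by connectedness, and being of finite index and closed it is also open, so it contains the identity component. This gives \((\sqrt M)^\perp=(M^\perp)^\circ\).

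The main obstacle is the identification of \((\sqrt M)^\perp\) as a torus at the level of algebraic groups rather than abstract groups, including the bookkeeping when \(\operatorname{char}K=p\). In that case \(X\) has no \(p\)-torsion, but \(X/\sqrt M\) might a priori carry \(p\)-phenomena. Since \(X/\sqrt M\) is torsion-free, this causes no trouble. For the finite quotient, however, \(\Hom(\sqrt M/M,K^\times)\) only sees the prime-to-\(p\) part; this is harmless because only finiteness is needed. I would cite the duality in \cite[\S16]{Hum} for the algebraic-group statement.
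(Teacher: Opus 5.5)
Your proof is correct and follows essentially the same route as the paper. You show that \((\sqrt M)^\perp\) is a torus (because \(X/\sqrt M\) is free) contained in \(M^\perp\), and that \(M^\perp/(\sqrt M)^\perp\cong\Hom(\sqrt M/M,K^\times)\) is finite, which is exactly the paper's exact-sequence argument with \(\mathrm t(X/M)=\sqrt M/M\); part (2) is handled identically.
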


\begin{proof} 
(1) Set \(L:=X/M\). Then the torsion subgroup of \(L\) is \(\mathrm t(L)=\sqrt M/M\), and restriction from \(L\) to \(\mathrm t(L)\) gives the exact sequence 
\[ 1\longrightarrow(\sqrt M)^\perp \longrightarrow M^\perp \longrightarrow\Hom_{\mathbb Z}(\mathrm t(L),K^\times) \longrightarrow1,\]
where we have used Equation \eqref{eq:A-perp-isom} to identify \(M^\perp \cong \Hom(X/M,K^\times)\) and \((\sqrt{M})^\perp \cong \Hom(X/\sqrt{M},K^\times)\). The group \(\Hom(\mathrm t(L),K^\times)\) is finite, hence \((M^\perp)^\circ\subseteq(\sqrt M)^\perp\). 
Since \(X/\sqrt M\cong L/\mathrm t(L)\) is free, \((\sqrt M)^\perp\) is a torus, and hence \((\sqrt M)^\perp\subseteq M^\perp\) implies \((\sqrt M)^\perp \subseteq (M^\perp)^\circ\). 

(2) An element \(q\in Q\) belongs to \((M+N)^\perp\) if and only if every character in both \(M\) and \(N\) is trivial on \(q\). Hence \((M+N)^\perp=M^\perp\cap N^\perp\).
\end{proof}

Let \(\sigma\) be an automorphism of $Q$. We write the action of \(\sigma\) on \(Q\) on the
right, so that \(q^\sigma\) denotes the image of \(q\in Q\) under \(\sigma\).
The induced action on the character group \(X\) is written on the left and is defined by
\[(\sigma\chi)(q):=\chi(q^\sigma),
\qquad q\in Q,\ \chi\in X.\]

\begin{proposition}\label{prop:fixed-points-quotient-quasitorus}
Let \(\sigma\) be an automorphism of a quasitorus \(Q\), and set \(X=\frk X(Q)\) and
\(X_\sigma=(\sigma-1)X\). Then \begin{enumerate}
    \item[(1)] \(Q^\sigma\cong\Hom_{\mathbb Z}(X/X_\sigma,K^\times)\), and 
    \item[(2)] \(Q/Q^\sigma\cong\Hom_{\mathbb Z}(X_\sigma,K^\times)\).
\end{enumerate}
\end{proposition}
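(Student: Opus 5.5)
The plan is to reduce both statements to the perp formalism of \eqref{eq:A-perp-isom}, applied to the subgroup \(M=X_\sigma\) of \(X\). The one real point is to identify the fixed-point subgroup \(Q^\sigma\) with \(X_\sigma^\perp\); everything else follows from the exact sequences already recorded.

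First I will show that \(Q^\sigma=X_\sigma^\perp\). Since \(Q\) is a quasitorus, characters separate its points: evaluation gives \(Q\cong\Hom(X,K^\times)\). So \(q\in Q\) satisfies \(q^\sigma=q\) if and only if \(\chi(q^\sigma)=\chi(q)\) for all \(\chi\in X\). By definition of the left action, \(\chi(q^\sigma)=(\sigma\chi)(q)\). In the additive notation for \(X\) this condition reads \(((\sigma-1)\chi)(q)=1\) for all \(\chi\in X\), that is, every element of \(X_\sigma=(\sigma-1)X\) is trivial on \(q\). This is exactly membership in \(X_\sigma^\perp\). I will note that \(\sigma\) acts on \(X\) by group automorphisms, so \(\sigma-1\) is a \(\mathbb Z\)-endomorphism and \(X_\sigma\) is indeed a subgroup.

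With this identification, (1) follows immediately from the first isomorphism in \eqref{eq:A-perp-isom}, namely \(M^\perp\cong\Hom(X/M,K^\times)\), applied with \(M=X_\sigma\). Likewise (2) follows from the second isomorphism, \(Q/M^\perp\cong\Hom(M,K^\times)\), which holds because restriction \(\iota^*\) is surjective by the injectivity of \(K^\times\) as a \(\mathbb Z\)-module and has kernel \(M^\perp\).

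The main (mild) obstacle is checking that these isomorphisms are isomorphisms of algebraic groups and not merely of abstract groups, if the statement is to be read that way. For (1) this is the standard anti-equivalence between diagonalizable groups and finitely generated abelian groups. Here one observes that \(X/X_\sigma\) is finitely generated with no \(p\)-torsion: it is a quotient of \(X\), and for (1) one views \(X_\sigma^\perp\) as a closed subgroup whose character group is \(X/X_\sigma\) modulo \(p\)-torsion. At the level of \(K\)-points, however, \(\Hom(X/X_\sigma,K^\times)\) already ignores \(p\)-torsion, so the abstract isomorphism suffices. For (2), the quotient \(Q/Q^\sigma\) is a diagonalizable group with character group \(\{\chi\in X : \chi|_{Q^\sigma}=1\}=\sqrt[p]{X_\sigma}\). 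Its \(K\)-points are again \(\Hom(X_\sigma,K^\times)\), since \(K^\times\) has no \(p\)-torsion. I would state the result at the level of groups of \(K\)-points, as the paper's notation suggests.
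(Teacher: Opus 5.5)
Your proposal is correct and is essentially the paper's proof. You show \(Q^\sigma=(X_\sigma)^\perp\) by translating \(q^\sigma=q\) into \((\sigma\chi-\chi)(q)=1\) for all \(\chi\in X\), and then read off (1) and (2) from \eqref{eq:A-perp-isom} with \(M=X_\sigma\).

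Your extra paragraph on algebraic-group structure is not needed, since the paper works with groups of \(K\)-points. Its claim that \(X/X_\sigma\) has no \(p\)-torsion is false in general: for example, \(\sigma=-1\) on \(X=\mathbb Z\) gives \(\mathbb Z/2\mathbb Z\), which is \(p\)-torsion when \(p=2\). The slip is harmless, because \(\Hom(-,K^\times)\) kills \(p\)-torsion anyway.
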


\begin{proof} 
Since \(Q\cong\Hom(X,K^\times)\), an element \(q\in Q\) is fixed by \(\sigma\) if and only if \(\chi(q^\sigma)=\chi(q)\) for every \(\chi\in  X\). 
By the definition of the induced action on \(X\), this is equivalent to \((\sigma\chi-\chi)(q)=1\) for every \(\chi\in X\), or equivalently \((X_\sigma)^\perp=Q^\sigma\). 
The result now follows from Equation \eqref{eq:A-perp-isom}.
\end{proof}

\begin{corollary}\label{cor:kernel-translation-lattice}
The restriction map \(X \to \frk X((Q^\sigma)^\circ)\) is surjective, has kernel \(\sqrt{X_\sigma}\), and induces an isomorphism \(\frk X\bigl((Q^\sigma)^\circ\bigr)
 \cong X/\sqrt{X_\sigma} \cong M_\sigma/\mathrm t(M_\sigma)\), where \(M_\sigma:=X/X_\sigma\). 
\end{corollary}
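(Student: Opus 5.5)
We combine \Cref{prop:fixed-points-quotient-quasitorus} with \Cref{prop:perp-formalism}(1). As in the proof of \Cref{prop:fixed-points-quotient-quasitorus}, we have \(Q^\sigma=(X_\sigma)^\perp\). Part (1) of \Cref{prop:perp-formalism}, applied with \(M=X_\sigma\), then gives
\[
(Q^\sigma)^\circ=\bigl((X_\sigma)^\perp\bigr)^\circ=(\sqrt{X_\sigma})^\perp .
\]
So the identity component of \(Q^\sigma\) is the subgroup of \(Q\) annihilated by \(\sqrt{X_\sigma}\). By \eqref{eq:A-perp-isom}, it is canonically isomorphic to \(\Hom_{\mathbb Z}(X/\sqrt{X_\sigma},K^\times)\).

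Next we identify the restriction map \(X\to\frk X((Q^\sigma)^\circ)\). Put \(M=\sqrt{X_\sigma}\). The exact sequence \(0\to\Hom(X/M,K^\times)\to\Hom(X,K^\times)\) identifies \(M^\perp\) with the characters of \(X\) that vanish on \(M\). Since \(X/M\) is torsion-free and finitely generated, it is free of finite rank. Hence \(M^\perp\cong\Hom(X/M,K^\times)\) is a torus whose character group is canonically \(X/M\), by double duality for finitely generated abelian groups (valid here because \(X/M\) has no \(p\)-torsion).

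Concretely, the restriction \(X\to\frk X(M^\perp)\) factors through \(X/M\), since characters in \(M\) are trivial on \(M^\perp\). The induced map \(X/M\to\frk X(\Hom(X/M,K^\times))\) is the evaluation map, which is an isomorphism for a free group of finite rank. This gives surjectivity and shows the kernel is exactly \(M=\sqrt{X_\sigma}\). If one wants to avoid quoting double duality, injectivity can be checked directly: if \(\chi\notin M\), its image in the free group \(X/M\) is nonzero, and a homomorphism \(X/M\to K^\times\) can be chosen, using an infinite-order element of \(K^\times\) or simply a basis argument, so that \(\chi\) does not take the value \(1\) on it.

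Finally, we show \(X/\sqrt{X_\sigma}\cong M_\sigma/\mathrm t(M_\sigma)\), where \(M_\sigma=X/X_\sigma\). As in the proof of \Cref{prop:perp-formalism}(1), the torsion subgroup of \(X/X_\sigma\) is \(\sqrt{X_\sigma}/X_\sigma\). The third isomorphism theorem then gives
\[
(X/X_\sigma)/(\sqrt{X_\sigma}/X_\sigma)\cong X/\sqrt{X_\sigma}.
\]

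The only step needing care is the identification of the character group of \(\Hom(X/M,K^\times)\) with \(X/M\), that is, the surjectivity and injectivity of evaluation. This is standard duality for tori and reduces to the rank-one case \(\frk X(K^\times)=\mathbb Z\).
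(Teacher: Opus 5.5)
Your proof is correct and follows essentially the paper's route. Both arguments identify \((Q^\sigma)^\circ=(\sqrt{X_\sigma})^\perp\cong\Hom(X/\sqrt{X_\sigma},K^\times)\) via \Cref{prop:perp-formalism}(1) and \eqref{eq:A-perp-isom}, and both use freeness of \(X/\sqrt{X_\sigma}\) to pin down the kernel. The only difference is how surjectivity is obtained: the paper derives it from the surjection \(K[Q]\to K[R]\) of coordinate algebras for a closed subgroup \(R\subseteq Q\), whereas you read it off from the duality \(\frk X(\Hom(F,K^\times))=F\) for free \(F\); your route needs \eqref{eq:A-perp-isom} to be an isomorphism of algebraic groups, which a splitting \(X=\sqrt{X_\sigma}\oplus X'\) provides.

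One small slip is in your optional injectivity remark. When \(K=\overline{\mathbb F}_p\), \(K^\times\) has no elements of infinite order, so rely on the basis argument instead: choose \(t\in K^\times\) with \(t^{n}\neq1\), which exists because \(K^\times\) is infinite.
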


\begin{proof}
The surjectivity is a general fact: for any closed subgroup $R$ of $Q$, restriction is a surjection of the coordinate algebras $K[Q]\to K[R]$, which in this case are the group algebras of $X$ and $\frk{X}(R)$.
Since \(Q^\sigma= (X_\sigma)^\perp\), we have  \((Q^\sigma)^\circ=(\sqrt{X_\sigma})^\perp\) by~\Cref{prop:perp-formalism}(1). By Equation~\eqref{eq:A-perp-isom}, we get \((Q^\sigma)^\circ\cong \Hom(X/\sqrt{X_\sigma},K^\times)\).
Since \(X/\sqrt{X_\sigma}\cong M_\sigma/\mathrm t(M_\sigma)\) is a free abelian group, its elements are separated by homomorphisms to \(K^\times\), which means that only the elements of 
\(\sqrt{X_\sigma}\) restrict have trivial restriction to \((Q^\sigma)^\circ\).
% it is a projective \(\mathbb Z\)-module. Hence the exact sequence 
% \(0 \to \sqrt{X_\sigma}\to X \to M_\sigma/\mathrm{t}(M_\sigma) \to 0\) splits.
% Choose a complement \(X'\) of \(\sqrt{X_\sigma}\) in \(X\). Under the identification 
% \(Q\cong\Hom_(X,K^\times)\), the decomposition \(X=\sqrt{X_\sigma}\oplus X'\) yields
% \(Q=(X')^\perp\times(\sqrt{X_\sigma})^\perp
% =(X')^\perp\times(Q^\sigma)^\circ\).
% It follows that every character of \((Q^\sigma)^\circ\) extends to a
% character of \(Q\), by declaring it to be trivial on the factor
% \((X')^\perp\). Therefore the restriction map
% \(\varphi \colon X\rightarrow
% \frk X\bigl((Q^\sigma)^\circ\bigr)\)
% is surjective.
% Clearly, \(\sqrt{X_\sigma}\subseteq\Ker \varphi\), since
% \((Q^\sigma)^\circ=(\sqrt{X_\sigma})^\perp\). On the other hand, if \(\chi'\in X'\) is in 
% \(\Ker\varphi\), then it will be trivial on \((X')^\perp\times(Q^\sigma)^\circ=Q\).  Therefore
% \(\Ker \varphi=\sqrt{X_\sigma}\).
\end{proof}

\section{Quasitori attached to connected components}\label{sec:outer-components-general}

\subsection{The general setup}\label{subsec:the-general-setup}

%This subsection establishes some general facts about semisimple conjugacy classes 
%in a (possibly disconnected) reductive algebraic group \(A\). 
%under the assumptions that \(G:=A^\circ\) is semisimple and that \(C_A(G)\subseteq G\). 
% After identifying the connected components of \(A\) with their induced outer automorphisms of
% \(G\), we associate with each component containing semisimple elements a
% quasitorus, a distinguished coset, and a generalized Weyl group. These objects
% reduce the classification of semisimple \(G\)-conjugacy classes in the component to a finite-group orbit problem.
%
% Fix a maximal torus \(T\subset G\). Let \(\Pi\) be a base of the root system \(\Phi(G,T)\), and let \(\Gamma:=\Aut\Pi\) be the group of diagram automorphisms. 
% Since \(G=A^\circ\), conjugation induces a morphism \(\rho\cl A\longrightarrow\Aut G\),
% \(a\longmapsto\Int(a)|_G\), and hence a homomorphism 
% \(\bar\rho\cl A/G\longrightarrow\Out G\).
% This homomorphism is injective. Indeed, if
% \(\Int(a)|_G=\Int(g)\) for some \(g\in G\), then
% \(ag^{-1}\in C_A(G)\subseteq G\), and therefore \(a\in G\).
% Via the injective map \(\bar \rho\), we identify \(A/G\) with \(\Gamma_0:=\op{Im}(\bar\rho)\subseteq \op{Out}G \subseteq \Aut \Pi\) (see e.g.~\cite[\S27.4]{Hum}) and write 
% \(\pi\cl A\to \Gamma_0\) for the resulting quotient map. 
Let \(A\) be a (possibly disconnected) reductive algebraic group. Denote \(G=A^\circ\) and \(\Gamma_0=A/G\), so we have a short exact sequence of algebraic groups

\begin{equation*}\label{eq:A-extension}
1\longrightarrow G\longrightarrow A\xrightarrow{\;\pi\;}\Gamma_0\longrightarrow 1,
\end{equation*}
with \(G\) connected and \(\Gamma_0\) finite.
For \(\sigma\in\Gamma_0\), set \(A^{(\sigma)}:=\pi^{-1}(\sigma)\).
Thus \(A^{(\sigma)}\) is the connected component corresponding to \(\sigma\). 
%and \(A^{(\mathrm{id})}=G\).

We want to describe \(G\)-conjugacy classes of semisimple elements in \(A^{(\sigma)}\).
Note that \(A^{(\sigma)}\) contains semisimple elements if and only if the order of \(\sigma\) is not divisible by the characteristic of \(K\). 
Indeed, if \(s\in A^{(\sigma)}\) is semisimple, then so is \(\sigma=\pi(s)\), hence its order cannot be divisible by \(\operatorname{char}K\). Conversely, if \(\operatorname{char}K\nmid\operatorname{ord}(\sigma)\) and \(a=a_sa_u\) is the Jordan decomposition of an element \(a\in A^{(\sigma)}\), then \(\sigma=\pi(a_s)\pi(a_u)\) is the Jordan decomposition of \(\sigma\). Since \(\sigma\) is semisimple, we get \(\pi(a_u)=1\) and \(\pi(a_s)=\sigma\), so \(a_s\in A^{(\sigma)}\).

% Let \(\g:=\mathscr L(G)=\mathscr L(A)\).
% Conjugation by \(A\) on \(G\) defines a morphism
% \(\rho\colon A\rightarrow\operatorname{Aut}G\) that sends \(a\mapsto\Int(a)|_G\).
% For \(a\in A\), let \(\Ad(a):=d\bigl(\rho(a)\bigr)\in\operatorname{Aut}\g\), and write \(\g^a:=\{x\in\g\mid\Ad(a)(x)=x\}\).
Let $\g:=\mathscr L(G) = \mathscr{L}(A)$.  Each $a\in A$ acts on \(\g\) by the Lie algebra automorphism \(\mathrm{Ad}(a)\);
we write \(\g^a:=\{x\in\g\mid \mathrm{Ad}(a)(x)=x\}\). For \(\sigma\in\Gamma_0\), we define
\begin{equation}\label{df:l(sigma)}
\ell(\sigma)\;:=\;\min\bigl\{\dim\g^a\bigm|a\in A^{(\sigma)}\bigr\},
\end{equation}
and call an element  \(a\in A^{(\sigma)}\) \emph{regular} if \(\dim\g^a=\ell(\sigma)\).

Fix a semisimple element \(s\in A^{(\sigma)}\). Let \(T^{(\sigma)}\) be a maximal torus of the fixed-point subgroup \(G^s\) and consider the Zariski closure of the subgroup \(S^{(\sigma)}\) of \(A\) generated by \(T^{(\sigma)}\) and \(s\). Since \(s\) is semisimple, the Zariski closure \(\overline{\langle s\rangle}\) is a quasitorus. 
Since \(T^{(\sigma)}\) centralizes \(s\), the group \(S^{(\sigma)}\) is a homomorphic image of \(T^{(\sigma)}\times \overline{\langle s\rangle}\), hence also a quasitorus. 
Since \(T^{(\sigma)}\leq(S^{(\sigma)})^\circ\leq G^s\), the maximality of \(T^{(\sigma)}\) gives
\((S^{(\sigma)})^\circ=T^{(\sigma)}\). In particular, \(T^{(\sigma)}\) has finite index in \(S^{(\sigma)}\), so the subgroup \(\langle T^{(\sigma)},s\rangle\) is already closed, and hence equals \(S^{(\sigma)}\).

Consider the coset \(M^{(\sigma)}:=sT^{(\sigma)}\) and its normalizer \(N^{(\sigma)}:=N_G\bigl(M^{(\sigma)}\bigr)\). Since
\(s\in M^{(\sigma)}\) and \(s^{-1}M^{(\sigma)}=T^{(\sigma)}\), the coset \(M^{(\sigma)}\) generates
\(S^{(\sigma)}\). The group \(N^{(\sigma)}\) acts on \(M^{(\sigma)}\) by conjugation, and this action factors through the quotient group \(W^{(\sigma)}:=N^{(\sigma)}/T^{(\sigma)}\), which we call the
\emph{\(\sigma\)-Weyl group}.
The construction depends on the choice of the element \(s\) and maximal torus \(T^{(\sigma)}\),
but we will prove in \Cref{prop:independence-of-M}
%{thm:sem-conj-outer-reorganized} 
that the \(G\)-conjugacy classes of
\(T^{(\sigma)}\), \(S^{(\sigma)}\), and \(M^{(\sigma)}\) depend only on the component \(A^{(\sigma)}\).

\begin{lemma}\label{lem:zero-weight-space-outer}
Consider the $S^{(\sigma)}$-weight decomposition of \(\g\) 
\[\g=\g_0\oplus \bigoplus_{0\neq \chi\in  \frk X(S^{(\sigma)})}\g_\chi, \qquad
\g_\chi=\{x\in\g\mid (\operatorname{Ad} t)(x)=\chi(t)x\ \forall\,t\in S^{(\sigma)}\}.\]
Then \(\g_0=\mathscr{L}\bigl(T^{(\sigma)}\bigr).\)
\end{lemma}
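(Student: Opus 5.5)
The plan is to identify \(\g_0\) with the Lie algebra of a centralizer and then use the maximality of \(T^{(\sigma)}\). First, since \(S^{(\sigma)}\) is a quasitorus, hence a diagonalizable (smooth) subgroup of \(A\), the zero weight space \(\g_0\) is the fixed-point subalgebra \(\g^{S^{(\sigma)}}\). I will invoke the fact recalled in the preliminaries that centralizers of diagonalizable subgroups of smooth group schemes are smooth (\cite[Exp.~XI,~Cor.~2.4]{sga3}, cf.~\cite[\S18.4]{Hum}). From this I get
\[
\g_0=\g^{S^{(\sigma)}}=\mathscr L\bigl(C_A(S^{(\sigma)})\bigr)=\mathscr L\bigl(C_G(S^{(\sigma)})^\circ\bigr),
\]
where the last equality uses that \(G=A^\circ\).

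Next I will set \(H:=C_G(S^{(\sigma)})^\circ\). Since \(S^{(\sigma)}=\langle T^{(\sigma)},s\rangle\), we have \(H=C_{G^s}(T^{(\sigma)})^\circ\), and \(T^{(\sigma)}\subseteq H\) because \(T^{(\sigma)}\) is commutative and connected. The goal is then to show that \(H=T^{(\sigma)}\), which yields \(\g_0=\mathscr L(T^{(\sigma)})\).

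For this I will use the structure of \((G^s)^\circ\). Because \(s\) is a semisimple automorphism of the reductive group \(G\), the group \((G^s)^\circ\) is reductive; this is Steinberg's theorem, and it holds in the disconnected setting since \(\Int(s)|_G\) is a semisimple automorphism. The torus \(T^{(\sigma)}\) is a maximal torus of \(G^s\), hence of \((G^s)^\circ\). In a connected reductive group the centralizer of a maximal torus is that torus itself, so \(C_{(G^s)^\circ}(T^{(\sigma)})=T^{(\sigma)}\). Finally, \(H\) is connected and contained in \(G^s\), so \(H\subseteq (G^s)^\circ\), and therefore \(H=T^{(\sigma)}\).

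The main obstacle is justifying that \((G^s)^\circ\) is reductive, together with the smoothness identification \(\g^{S}=\mathscr L(C_A(S))\) in arbitrary characteristic. For the second point I rely on the SGA3 citation already quoted in the paper. For the first, I would cite Steinberg's \emph{Endomorphisms of linear algebraic groups}, Theorem 9.1. If that is unavailable, I would instead argue via the \(S^{(\sigma)}\)-grading: the unipotent radical of \(H\) is normalized by \(s\), and centralizers of tori in reductive groups are reductive. This alternative route also needs care, so I expect this step to be the main point to get right.
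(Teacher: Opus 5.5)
Your proof is correct and follows essentially the same route as the paper. Both arguments identify \(\g_0\) with the Lie algebra of the centralizer of the quasitorus \(S^{(\sigma)}\), rewrite that centralizer as \(C_{G^s}(T^{(\sigma)})\), and use that \((G^s)^\circ\) is reductive with maximal torus \(T^{(\sigma)}\), so the identity component of the centralizer is \(T^{(\sigma)}\); the differences are only in the citations (the paper uses Humphreys \S18.4 and McNinch in place of SGA3 and Steinberg), and your fallback argument is not needed.
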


\begin{proof}
By definition,
\(\g_0\) is the centralizer \(\mathfrak c_{\g}\bigl(S^{(\sigma)}\bigr)\) of \(S^{(\sigma)}\) in \(\g\). 
Since $S^{(\sigma)}$ is a quasitorus, we have, by~\cite[\S18.4, Prop.~A]{Hum}, that \(\mathfrak c_{\g}\!\bigl(S^{(\sigma)}\bigr)=\mathscr{L}\!\bigl(C_G(S^{(\sigma)})\bigr)\). As
\(S^{(\sigma)}=\langle T^{(\sigma)},s\rangle\), we have
\(C_G\bigl(S^{(\sigma)}\bigr)
=C_{G^s}\bigl(T^{(\sigma)}\bigr)\).
Since \(s\) is semisimple, \((G^s)^\circ\) is reductive (see,
e.g.,~\cite[\S3.6.2]{mcninch}). Moreover, \(T^{(\sigma)}\) is a maximal torus
of \((G^s)^\circ\), and therefore \(C_{(G^s)^\circ}\bigl(T^{(\sigma)}\bigr)=T^{(\sigma)}\)
(see, e.g.,~\cite[\S26.2, Cor.~A]{Hum}). Hence  
\(\g_0=\mathscr L\bigl(C_{G^s}(T^{(\sigma)})\bigr)=\mathscr L\bigl(C_{(G^s)^\circ}(T^{(\sigma)})\bigr)
=\mathscr L\bigl(T^{(\sigma)}\bigr).\)
\end{proof}

We next isolate a nonempty open subset of \(M^{(\sigma)}\) for elements of which the fixed-point algebra is exactly this zero-weight space.

\begin{lemma}\label{lem:regular-open-slice}
With notation as above, let
\(\Phi^{(\sigma)}:=\{\chi\in\frk X(S^{(\sigma)})\setminus\{0\}\mid \g_\chi\neq0\}\), and define
\(M_0^{(\sigma)}:=\{x\in M^{(\sigma)}\mid \chi(x)\neq1\text{ for every }\chi\in\Phi^{(\sigma)}\}\).
Then \(M_0^{(\sigma)}\) is a nonempty open subset of \(M^{(\sigma)}\). Moreover, \(x\in M_0^{(\sigma)}\) if and only if \(\g^x=\g_0=\mathscr L(T^{(\sigma)})\). Equivalently, \(x\in M_0^{(\sigma)}\) if and only if \(\dim\g^x=\dim T^{(\sigma)}\).
\end{lemma}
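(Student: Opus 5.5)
Since \(S^{(\sigma)}\) is a quasitorus and \(\g\) is finite-dimensional, the set \(\Phi^{(\sigma)}\) is finite. Each character \(\chi\in\frk X(S^{(\sigma)})\) restricts to a regular function on the coset \(M^{(\sigma)}=sT^{(\sigma)}\), so \(\{x\in M^{(\sigma)}\mid \chi(x)\neq1\}\) is open. Hence \(M_0^{(\sigma)}\) is a finite intersection of open subsets of \(M^{(\sigma)}\), and is therefore open.

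For nonemptiness, note that \(M^{(\sigma)}\) is irreducible, being a translate of the torus \(T^{(\sigma)}\). It therefore suffices to show that each of the finitely many open sets is nonempty, i.e., that no \(\chi\in\Phi^{(\sigma)}\) is identically \(1\) on \(M^{(\sigma)}\). Suppose \(\chi\equiv1\) on \(M^{(\sigma)}\). Since \(M^{(\sigma)}\) generates \(S^{(\sigma)}\) as a group, as noted before \Cref{lem:zero-weight-space-outer}, and \(\chi\) is a homomorphism, we would get \(\chi\equiv1\) on \(S^{(\sigma)}\), i.e., \(\chi=0\). This contradicts \(\chi\neq0\).

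For the characterization, take \(x\in M^{(\sigma)}\subseteq S^{(\sigma)}\). Then \(\operatorname{Ad}(x)\) acts on \(\g_\chi\) by the scalar \(\chi(x)\), so
\[
\g^x=\bigoplus_{\chi\,:\,\chi(x)=1}\g_\chi=\g_0\oplus\bigoplus_{\chi\in\Phi^{(\sigma)},\,\chi(x)=1}\g_\chi .
\]
Thus \(\g^x=\g_0\) if and only if \(\chi(x)\neq1\) for all \(\chi\in\Phi^{(\sigma)}\), using that \(\g_\chi\neq0\) for \(\chi\in\Phi^{(\sigma)}\). By \Cref{lem:zero-weight-space-outer}, \(\g_0=\mathscr L(T^{(\sigma)})\).

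For the dimension version, recall that \(\g_0\subseteq\g^x\) always holds and that \(\dim\mathscr L(T^{(\sigma)})=\dim T^{(\sigma)}\), since tori are smooth. Hence \(\g^x=\g_0\) if and only if \(\dim\g^x=\dim T^{(\sigma)}\).

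The only mildly delicate point is nonemptiness, and it is handled by the generation argument above. I expect no real obstacle, provided the fact that \(M^{(\sigma)}\) generates \(S^{(\sigma)}\), established in the setup, is invoked.
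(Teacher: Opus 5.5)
Your proposal is correct and takes essentially the same route as the paper. Openness and nonemptiness come from finitely many conditions \(\chi(x)\neq1\) on the irreducible coset \(M^{(\sigma)}=sT^{(\sigma)}\), with each condition nonempty because \(S^{(\sigma)}=\langle T^{(\sigma)},s\rangle\), and the characterization comes from the \(S^{(\sigma)}\)-weight decomposition together with \Cref{lem:zero-weight-space-outer}. The only cosmetic difference is that you skip the paper's case split on whether \(\chi|_{T^{(\sigma)}}\) is trivial: you argue directly that a character identically \(1\) on the generating coset \(sT^{(\sigma)}\) is trivial on \(S^{(\sigma)}\).
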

\begin{proof}
Write \(\Phi^{(\sigma)}=\{\chi_1,\dots,\chi_m\}\) and set \(T_i:=\{t\in T^{(\sigma)}\mid \chi_i(st)\neq 1\}\). 
Then \(M_0^{(\sigma)}=sT_0\), where \(T_0:=\bigcap_iT_i\). We claim that each \(T_i\) is a nonempty open subset of \(T^{(\sigma)}\). If \(\chi_i|_{T^{(\sigma)}}\) is nontrivial, then the function \(t\mapsto\chi_i(st)=\chi_i(s)\chi_i(t)\) is a nonconstant regular function on \(T^{(\sigma)}\), so \(T_i\) is the complement of a proper closed subset and is therefore nonempty open. If \(\chi_i|_{T^{(\sigma)}}=1\), then \(\chi_i(st)=\chi_i(s)\) is constant. In this case \(\chi_i(s)\neq1\), since otherwise \(\chi_i\) would be trivial on \(S^{(\sigma)}=\langle T^{(\sigma)},s\rangle\). 

Since \(T^{(\sigma)}\) is irreducible, the finite intersection \(T_0=\bigcap_iT_i\) is nonempty open, hence so is \(M_0^{(\sigma)}=sT_0\). Now let \(x\in M^{(\sigma)}\). Since \(x\in S^{(\sigma)}\), the \(S^{(\sigma)}\)-weight decomposition gives 
\[
\g^x=\g_0\oplus\bigoplus_{\chi\in\Phi^{(\sigma)},\,\chi(x)=1}\g_\chi.
\] 
By~\Cref{lem:zero-weight-space-outer}, \(\g_0=\mathscr L(T^{(\sigma)})\). Therefore \(\g^x=\g_0\) if and only if no nonzero \(S^{(\sigma)}\)-weight occurring in the decomposition of \(\g\) takes the value \(1\) on \(x\), which is exactly the condition \(x\in M_0^{(\sigma)}\). This is also equivalent to \(\dim\g^x=\dim\g_0=\dim T^{(\sigma)}\).
\end{proof}

We next show that the
\(G\)-conjugates of \(M_0^{(\sigma)}\) form a nonempty open subset of \(A^{(\sigma)}\). Recall that $\mathscr{T}(X)_x$ denotes the Zariski tangent space of an algebraic variety $X$ at a point $x$.
\begin{lemma}\label{lem:orbit-map-open}
Consider the morphism
\(\varphi \colon G\times M_0^{(\sigma)}\rightarrow A^{(\sigma)},\ (g,r)\mapsto grg^{-1}\).
Then $\varphi$ is open. In particular, its image is a nonempty open subset of $A^{(\sigma)}$.
\end{lemma}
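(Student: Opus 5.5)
We will show that the differential of \(\varphi\) is surjective at every point of \(G\times M_0^{(\sigma)}\), and then use the standard fact that a morphism of smooth irreducible varieties with surjective differentials everywhere is open. Concretely, \(d\varphi\) surjective at every point means \(\varphi\) is smooth (separable, dominant with the right fibre dimension) on the whole source, and smooth morphisms are open. Since \(G\times M_0^{(\sigma)}\) is nonempty by \Cref{lem:regular-open-slice}, the image is then a nonempty open subset of \(A^{(\sigma)}\).

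\emph{Reduction to the identity.} The map \(\varphi\) is equivariant: \(\varphi(hg,r)=h\varphi(g,r)h^{-1}\). Left translation by \(h\) on the first factor and conjugation by \(h\) on \(A^{(\sigma)}\) are isomorphisms. So it suffices to check surjectivity of \(d\varphi\) at points \((1,r)\) with \(r\in M_0^{(\sigma)}\).

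\emph{Computing the differential.} We identify tangent spaces by translation. We take \(\mathscr T(A^{(\sigma)})_r=\mathscr L(A)r=\g\) via right translation by \(r\), and \(\mathscr T(M_0^{(\sigma)})_r=\mathscr L(T^{(\sigma)})\) since \(M_0^{(\sigma)}\) is open in \(rT^{(\sigma)}\). A short computation of the type used for the map \(g\mapsto grg^{-1}r^{-1}\) then gives
\[
d\varphi_{(1,r)}(X,Y)=\bigl(X-\operatorname{Ad}(r)X\bigr)+\operatorname{Ad}(r)Y \qquad (X\in\g,\ Y\in\mathscr L(T^{(\sigma)})),
\]
up to the harmless twist by \(\operatorname{Ad}(r)\). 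Because \(T^{(\sigma)}\) commutes with \(r\), \(\operatorname{Ad}(r)\) acts trivially on \(\mathscr L(T^{(\sigma)})\), so the image is \((1-\operatorname{Ad}r)\g+\mathscr L(T^{(\sigma)})\).

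\emph{Dimension count.} The element \(r\) is semisimple, since it lies in the quasitorus \(S^{(\sigma)}\). Hence \(\operatorname{Ad}(r)\) is diagonalizable, and \(\g=(1-\operatorname{Ad}r)\g\oplus\g^r\). By \Cref{lem:regular-open-slice}, \(\g^r=\mathscr L(T^{(\sigma)})\). So the image of \(d\varphi_{(1,r)}\) is all of \(\g\), whose dimension equals \(\dim A^{(\sigma)}\).

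\emph{Main obstacle.} The delicate step is passing from surjective differentials to openness in positive characteristic. The source and target are smooth, as they are open in a product of a group and a torus coset, and in a coset of \(G\). Surjectivity of \(d\varphi\) everywhere therefore means \(\varphi\) is smooth in the scheme-theoretic sense (e.g. via the Jacobian criterion / \cite[\S\S 4.3, 5.5]{Hum}-style arguments). In particular it is flat, hence open. If one prefers elementary tools, an alternative route is as follows. Surjectivity at one point gives dominance and separability. Then every fibre has dimension \(\dim G+\dim T^{(\sigma)}-\dim G\), and a dominant morphism of irreducible varieties with all fibres equidimensional of the expected dimension onto a normal target is open (Chevalley's openness criterion). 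I would rely on the smoothness/flatness argument as the primary route.
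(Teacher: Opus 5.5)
Your proposal is correct and follows essentially the same route as the paper. Both reduce to points $(1,r)$ by $G$-equivariance, identify tangent spaces via right translation by $r^{-1}$, show that the image of the differential is $(1-\Ad(r))\g+\mathscr L(T^{(\sigma)})=\g$ using semisimplicity of $\Ad(r)$ and $\g^r=\mathscr L(T^{(\sigma)})$ from \Cref{lem:regular-open-slice}, and then get openness from smoothness of $\varphi$ (the paper cites Hartshorne III.10.4 and the openness of smooth morphisms).
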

\begin{proof}
By~\Cref{lem:regular-open-slice} $M_0^{(\sigma)}\neq\varnothing$, and hence the image is nonempty.
The varieties $G\times M_0^{(\sigma)}$ and $A^{(\sigma)}$ are smooth, so if $\varphi$ has surjective
differential everywhere then it follows from~\cite[III,~\S10, Prop.~10.4]{Har} that $\varphi$ is smooth everywhere. Since smooth morphisms are open (see e.g.~\cite[III,~\S9,~Exer.~9.1]{Har}), we have that $\varphi(G\times M_0^{(\sigma)})$ is open in $A^{(\sigma)}$. We now prove that the differential of $\varphi$ is surjective everywhere. 

For $(g,r)\in G\times M_0^{(\sigma)}$, the differential of $\varphi$ is a \(K\)-linear map
\[d\varphi_{(g,r)}\cl \mathscr{T}(G\times M_0^{(\sigma)})_{(g,r)}\;=\;\mathscr{T}(G)_g\oplus \mathscr{T}(M_0^{(\sigma)})_r \longrightarrow \mathscr{T}(A^{(\sigma)})_{grg^{-1}}.\]
Fix $g\in G$. Then left translation $L_g\cl G\to G$ and conjugation $\Int(g)\cl A^{(\sigma)}\to A^{(\sigma)}$ are isomorphisms of varieties, and \(\varphi\circ (L_g\times \mathrm{id})=\Int(g)\circ \varphi\).
Differentiating at $(1,r)$ shows that \(d\varphi_{(g,r)}\) is obtained from \(d\varphi_{(1,r)}$ by composing with the differentials of these isomorphisms. In particular, \(d\varphi_{(g,r)}\) is surjective if and only if \(d\varphi_{(1,r)}\) is surjective. Hence it suffices to prove surjectivity at  $(1,r)$ with $r\in M_0^{(\sigma)}$.

For $x,r\in A^{(\sigma)}$ we have $xr^{-1}\in G$, so the right translation 
\(R_{r^{-1}}\cl \ A^{(\sigma)}\longrightarrow G,\ x\longmapsto xr^{-1}\) is an isomorphism of varieties.
Since $R_{r^{-1}}(r)=1$, we have 
\(d(R_{r^{-1}})_r\cl \mathscr{T}(A^{(\sigma)})_r\xrightarrow{\sim}\mathscr{T}(G)_1=\g\). 
Moreover, since $r\in M^{(\sigma)}=sT^{(\sigma)}$ and $T^{(\sigma)}\subset G^s$, writing $r=sh$ with $h\in T^{(\sigma)}$ we get
\(M^{(\sigma)}r^{-1} = sT^{(\sigma)}h^{-1}s^{-1}=T^{(\sigma)}\).
Hence $R_{r^{-1}}$ restricts to an isomorphism $M^{(\sigma)}\to T^{(\sigma)}$; since
$M_0^{(\sigma)}$ is open in $M^{(\sigma)}$, it follows that
\(d(R_{r^{-1}})_r\cl \mathscr{T}(M_0^{(\sigma)})_r
\xrightarrow{\sim}
\mathscr{T}(T^{(\sigma)})_1
=\mathscr{L}\bigl(T^{(\sigma)}\bigr)\).

We now consider the restrictions of $\varphi$ to $G \times \{r\}$ and $\{1\} \times M^{(\sigma)}_{0}$. For the first one, we will use the commutator map
\(\gamma_r\cl G\to G, \gamma_r(g):=grg^{-1}r^{-1}.\)
Then $\varphi(g,r)=grg^{-1}=\gamma_r(g)r$, i.e.,
\(\varphi(\,\cdot\,,r)=R_r\circ \gamma_r\).
Since \(d(R_{r^{-1}})_r\cl \mathscr T(A^{(\sigma)})_r\to\g\) is an isomorphism, surjectivity of \(d\varphi_{(1,r)}\) is equivalent to surjectivity of
\[
d(R_{r^{-1}})_r\circ d\varphi_{(1,r)}\cl  \g\oplus \mathscr{T}(M_0^{(\sigma)})_r \longrightarrow \g.
\]
Using $\varphi(\,\cdot\,,r)=R_r\circ\gamma_r$, we have
\(d(R_{r^{-1}})_r\circ d\bigl(\varphi(\,\cdot\,,r)\bigr)_1 = d(\gamma_r)_1\).
It is well known (see e.g.~\cite[Prop.~10.1(c)]{Hum}) that
\(d(\gamma_r)_1 = 1-\operatorname{Ad}(r)\) on \(\g\). On the other hand, the restriction of $\varphi$ to $\{1\}\times M_0^{(\sigma)}$ is the inclusion
$M_0^{(\sigma)}\hookrightarrow A^{(\sigma)}$, hence its differential at $r$ is the natural inclusion
\(\mathscr{T}(M_0^{(\sigma)})_r \hookrightarrow \mathscr{T}(A^{(\sigma)})_r\).
After composing with \(d(R_{r^{-1}})_r\), this becomes the inclusion
\(\mathscr{L}\bigl(T^{(\sigma)}\bigr)\ \hookrightarrow\ \g\). Combining the differentials of the two restrictions, we obtain the formula
\[\bigl(d(R_{r^{-1}})_r\circ d\varphi_{(1,r)}\bigr)(\xi,\eta)
\;=\;(1-\operatorname{Ad}(r))\xi+\eta,\qquad
\xi\in\g,\ \eta\in \mathscr{L}\bigl(T^{(\sigma)}\bigr).\]
In particular, \(\op{Im}\bigl(d(R_{r^{-1}})_r\circ d\varphi_{(1,r)}\bigr)
=(1-\operatorname{Ad}(r))\g+\mathscr{L}\bigl(T^{(\sigma)}\bigr)\).
By definition $\Ker(1-\operatorname{Ad}(r))=\g^r$, so \Cref{lem:regular-open-slice} gives 
\(\Ker(1-\operatorname{Ad}(r))=\mathscr{L}\bigl(T^{(\sigma)}\bigr)\) for $r\in M_0^{(\sigma)}$. Since \(r \in A\) is semisimple, \(\Ad(r)\) is a semisimple endomorphism of
\(\g\), so
\(\g=\Ker(1-\Ad(r))\oplus \op{Im}(1-\Ad(r))=\mathscr{L}\bigl(T^{(\sigma)}\bigr)+(1-\operatorname{Ad}(r))\g\), so \(d(R_{r^{-1}})_r\circ d\varphi_{(1,r)}\) is surjective, as required. 
\end{proof}

\begin{proposition}\label{prop:independence-of-M}
Let $\tilde s\in A^{(\sigma)}$ be semisimple, and let $\widetilde T^{(\sigma)}$,
$\widetilde S^{(\sigma)}$ and $\widetilde M^{(\sigma)}$ be constructed from $\tilde s$ instead of $s$. 
Then there exists $g\in G$ such that
\[g\,\widetilde M^{(\sigma)}\,g^{-1}=M^{(\sigma)}, \quad g\,\widetilde S^{(\sigma)}\,g^{-1}=S^{(\sigma)},
\quad\text{and}\quad
g\,\widetilde T^{(\sigma)}\,g^{-1}=T^{(\sigma)},
\]
i.e., $M^{(\sigma)}$, $T^{(\sigma)}$ and $S^{(\sigma)}$ depend, up to \(G\)-conjugacy,
only on $\sigma$. Moreover, every semisimple element of $A^{(\sigma)}$ is \(G\)-conjugate to an
element of $M^{(\sigma)}$.
\end{proposition}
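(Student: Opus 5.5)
The plan is to use the openness from \Cref{lem:orbit-map-open} and the irreducibility of the connected component \(A^{(\sigma)}\) to find one element lying in a conjugate of both slices. I then recover the three objects \(T^{(\sigma)}\), \(S^{(\sigma)}\), \(M^{(\sigma)}\) intrinsically from that single element.

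\emph{Step 1: find a common element.} The tilde construction also satisfies \Cref{lem:regular-open-slice} and \Cref{lem:orbit-map-open}, since those only used that \(\tilde s\) is semisimple in \(A^{(\sigma)}\). So both \(G\cdot M_0^{(\sigma)}\) and \(G\cdot\widetilde M_0^{(\sigma)}\) are nonempty open subsets of \(A^{(\sigma)}\). The variety \(A^{(\sigma)}\) is irreducible, being a coset of the connected group \(G\), so these two subsets intersect. Hence there are \(r\in M_0^{(\sigma)}\), \(\tilde r\in\widetilde M_0^{(\sigma)}\) and \(g\in G\) with \(g\tilde r g^{-1}=r\). Replacing the tilde data by their \(g\)-conjugates, we may assume that \(x:=r=\tilde r\) lies in \(M_0^{(\sigma)}\cap\widetilde M_0^{(\sigma)}\).

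\emph{Step 2: recover the torus from \(x\).} I claim \((G^x)^\circ=T^{(\sigma)}\). Write \(x=sh\) with \(h\in T^{(\sigma)}\). Since \(T^{(\sigma)}\) is commutative and centralizes \(s\), it centralizes \(x\), so \(T^{(\sigma)}\subseteq (G^x)^\circ\). On the other hand, \(\mathscr L(G^x)\subseteq\g^x\), and \(\g^x=\mathscr L(T^{(\sigma)})\) by \Cref{lem:regular-open-slice}. This gives
\[
\dim T^{(\sigma)}\le\dim (G^x)^\circ\le\dim\g^x=\dim T^{(\sigma)}.
\]
A connected group containing a closed connected subgroup of the same dimension equals it, which proves the claim. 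By the same argument applied to \(\tilde s\), we get \((G^x)^\circ=\widetilde T^{(\sigma)}\). Hence \(T^{(\sigma)}=\widetilde T^{(\sigma)}\).

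\emph{Step 3: recover the quasitorus and the coset.} Since \(x\in sT^{(\sigma)}\), we have \(M^{(\sigma)}=xT^{(\sigma)}\) and \(S^{(\sigma)}=\langle T^{(\sigma)},s\rangle=\langle T^{(\sigma)},x\rangle\). Both expressions depend only on \(x\) and the torus, so the same formulas hold for the tilde objects. Therefore \(\widetilde M^{(\sigma)}=M^{(\sigma)}\) and \(\widetilde S^{(\sigma)}=S^{(\sigma)}\). Undoing the conjugation from Step 1 gives a single \(g\in G\) with the three stated equalities.

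\emph{Step 4: the final assertion.} Let \(t\in A^{(\sigma)}\) be semisimple. Run the construction with \(\tilde s:=t\); then \(t\in\widetilde M^{(\sigma)}=t\widetilde T^{(\sigma)}\). By the first part, some \(g\in G\) conjugates \(\widetilde M^{(\sigma)}\) onto \(M^{(\sigma)}\), so \(gtg^{-1}\in M^{(\sigma)}\).

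The only delicate step is Step 2. It relies on the inclusion \(\mathscr L(G^x)\subseteq\g^x\), which holds without any smoothness assumption and is all the argument needs. It also relies on the fact that \(T^{(\sigma)}\) fixes \(x\), not merely \(s\); this holds because \(x\in sT^{(\sigma)}\) and \(T^{(\sigma)}\) is commutative. The rest is formal once the openness result \Cref{lem:orbit-map-open} is available.
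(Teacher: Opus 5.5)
Your proposal is correct and follows the paper's proof essentially step for step: you intersect the two nonempty open sets from \Cref{lem:orbit-map-open}, identify both tori with \((G^x)^\circ\) for a common point \(x\) by a dimension count based on \Cref{lem:regular-open-slice}, and then recover \(M^{(\sigma)}\) and \(S^{(\sigma)}\) as the coset of that torus through \(x\) and the group that coset generates. The only minor difference is that you use just the general inclusion \(\mathscr L(G^x)\subseteq\g^x\), while the paper invokes the equality \(\mathscr L((G^{x})^\circ)=\g^{x}\) for semisimple \(x\); this shows that semisimplicity of \(x\) is not needed at that step.
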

\begin{proof}
Applying~\Cref{lem:orbit-map-open} to the semisimple elements \(s\) and \(\tilde s\), we
obtain two nonempty open subsets of \(A^{(\sigma)}\), namely the images of
\[G\times M_0^{(\sigma)}\longrightarrow A^{(\sigma)}
\qquad\text{and}\qquad
G\times \widetilde M_0^{(\sigma)}\longrightarrow A^{(\sigma)}.\]
Hence these images intersect. Choose a point \(x\) in the intersection. Then there
exist elements \(a,b\in G\), \(s_0\in M_0^{(\sigma)}\), and
\(\tilde s_0\in \widetilde M_0^{(\sigma)}\) such that
\(x=as_0a^{-1}=b\tilde s_0 b^{-1}\). Set \(g:=a^{-1}b\), then
\(s_0=g\tilde s_0 g^{-1}\in M_0^{(\sigma)}\cap g\widetilde M_0^{(\sigma)}g^{-1}\). 

We claim that
\(T^{(\sigma)}=(G^{s_0})^\circ\).
Indeed, writing \(s_0=st_0\) with \(t_0\in T^{(\sigma)}\), we see that
\(T^{(\sigma)}\) centralizes \(s_0\) and hence \(T^{(\sigma)}\subseteq(G^{s_0})^\circ\).
Moreover, \(s_0\) is semisimple, so \(\mathscr L((G^{s_0})^\circ)=\g^{s_0}\). 
On the other hand, by \Cref{lem:regular-open-slice}, \(\g^{s_0}=\mathscr L(T^{(\sigma)})\).
Thus \(T^{(\sigma)}\) and \((G^{s_0})^\circ\) have the same dimension, which forces 
\(T^{(\sigma)}=(G^{s_0})^\circ\).
Applying the same argument to
\(s_0\in g\widetilde M_0^{(\sigma)}g^{-1}\) gives
\(g\widetilde T^{(\sigma)}g^{-1}=(G^{s_0})^\circ\).
Therefore,
\(g\widetilde T^{(\sigma)}g^{-1}=T^{(\sigma)}\).

It follows that \(M^{(\sigma)}\) and \(g\widetilde M^{(\sigma)}g^{-1}\) are cosets of
the same torus \(T^{(\sigma)}\), and both contain the element \(s_0\). Hence
\(g\widetilde M^{(\sigma)}g^{-1}=s_0T^{(\sigma)}=M^{(\sigma)}\). 
Since \(M^{(\sigma)}\) generates \(S^{(\sigma)}\) and \(\widetilde M^{(\sigma)}\) generates 
\(\widetilde S^{(\sigma)}\), the first assertion is proved.
Finally, if \(\tilde s\in A^{(\sigma)}\) is semisimple, then by construction
\(\tilde s\in \widetilde M^{(\sigma)}\), and the conjugacy just established shows that
\(\tilde s\) is \(G\)-conjugate to an element of \(M^{(\sigma)}\).
\end{proof}

Therefore \(M^{(\sigma)}\) (hence also \(T^{(\sigma)}\) and \(S^{(\sigma)}\)) 
are defined up to \(G\)-conjugacy.
Moreover, \Cref{prop:independence-of-M} reduces the classification of
semisimple elements in \(A^{(\sigma)}\) up to \(G\)-conjugacy to the
classification of elements of \(M^{(\sigma)}\) up to \(G\)-conjugacy.

\begin{theorem}\label{thm:sem-conj-outer-reorganized}
Let \(A\) be a reductive algebraic group with identity component \(G\) and component group \(\Gamma_0\).
%such that \(G:=A^\circ\) is semisimple and \(C_A(G)\subseteq G\). 
%Let \(\Gamma_0\) be the image of the monomorphism \(A/G\to\Out G\) induced by the action of \(A\) on \(G\). 
Denote by \(\pi\cl A\to\Gamma_0\) the quotient map and set \(A^{(\sigma)}:=\pi^{-1}(\sigma)\) for \(\sigma\in\Gamma_0\). If \(\op{char}K=p\), assume that \(p\nmid\ord(\sigma)\). For \(s\in A^{(\sigma)}\) semisimple, let \(T^{(\sigma)}\) be a maximal torus of \(G^s\), \(S^{(\sigma)}=\langle T^{(\sigma)},s \rangle\), \(M^{(\sigma)}=sT^{(\sigma)}\), and \(W^{(\sigma)}=N_G(M^{(\sigma)})/T^{(\sigma)}\). 
\begin{enumerate}
\item[(1)] The subset \(M^{(\sigma)}\subset A^{(\sigma)}\), and hence also \(T^{(\sigma)}\) and \(S^{(\sigma)}\), depend up to \(G\)-conjugacy only on \(\sigma\), and not on the choice of semisimple \(s\in A^{(\sigma)}\).

\item[(2)] \(\dim S^{(\sigma)}=\dim T^{(\sigma)}=\ell(\sigma)\), where \(\ell(\sigma)\) is defined by Equation \eqref{df:l(sigma)}.
%:=\min\{\dim\g^a\mid a\in A^{(\sigma)}\}\).

\item[(3)] The set of regular semisimple elements of \(A^{(\sigma)}\) is open and nonempty.

\item[(4)] Any semisimple element of \(A^{(\sigma)}\) is \(G\)-conjugate to an element of \(M^{(\sigma)}\).

\item[(5)] Two elements of \(M^{(\sigma)}\) are \(G\)-conjugate if and only if they lie in the same \(W^{(\sigma)}\)-orbit.
\end{enumerate}
\end{theorem}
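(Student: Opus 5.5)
Parts (1) and (4) are exactly \Cref{prop:independence-of-M}, so I will simply cite it. For (2) and (3), I will first show that \(\dim\g^a\ge\dim T^{(\sigma)}\) for every \(a\in A^{(\sigma)}\). The plan is to use the open set \(U:=\varphi(G\times M_0^{(\sigma)})\) from \Cref{lem:orbit-map-open}. On \(U\), every element is \(G\)-conjugate to some \(r\in M_0^{(\sigma)}\), and \Cref{lem:regular-open-slice} gives \(\dim\g^r=\dim T^{(\sigma)}\).

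Next, I use that \(a\mapsto\dim\g^a=\dim\Ker(1-\Ad(a))\) is upper semicontinuous on the irreducible variety \(A^{(\sigma)}\). Hence its minimum \(\ell(\sigma)\) is attained on a nonempty open set, and that set meets \(U\). It remains to show that no \(a\) has \(\dim\g^a<\dim T^{(\sigma)}\). If some \(a\) did, the set \(\{a:\dim\g^a<\dim T^{(\sigma)}\}\) would be nonempty and open, so it would meet \(U\), which is a contradiction. Therefore \(\ell(\sigma)=\dim T^{(\sigma)}\), and \(\dim S^{(\sigma)}=\dim T^{(\sigma)}\) because \((S^{(\sigma)})^\circ=T^{(\sigma)}\). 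This proves (2).

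For (3), every element of \(U\) is semisimple (being conjugate to an element of the quasitorus \(S^{(\sigma)}\)) and regular by (2). Conversely, a regular semisimple \(a\) is, by (4), conjugate to some \(r\in M^{(\sigma)}\) with \(\dim\g^r=\ell(\sigma)=\dim T^{(\sigma)}\). By \Cref{lem:regular-open-slice}, this means \(r\in M_0^{(\sigma)}\). So the set of regular semisimple elements is exactly \(U\), which is open and nonempty.

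For (5), the "if" direction holds by definition of \(W^{(\sigma)}\). For "only if", suppose \(x,y\in M^{(\sigma)}\) and \(gxg^{-1}=y\) with \(g\in G\). Both \(T^{(\sigma)}\) and \(gT^{(\sigma)}g^{-1}\) centralize \(y\), so both are tori in \(G^y\). The key claim is that \(T^{(\sigma)}\) is a maximal torus of \((G^y)^\circ\). Since \(C_G(S^{(\sigma)})^\circ=T^{(\sigma)}\) (shown in \Cref{lem:zero-weight-space-outer}), any torus of \(G^y\) containing \(T^{(\sigma)}\) commutes with \(T^{(\sigma)}\) and with \(y\). Because \(y\) and \(T^{(\sigma)}\) generate \(S^{(\sigma)}\), such a torus lies in \(C_G(S^{(\sigma)})^\circ=T^{(\sigma)}\).

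By conjugacy of maximal tori in the connected group \((G^y)^\circ\), there is \(h\in(G^y)^\circ\) with \(hgT^{(\sigma)}g^{-1}h^{-1}=T^{(\sigma)}\). Put \(n:=hg\). Then \(nxn^{-1}=y\) and \(nT^{(\sigma)}n^{-1}=T^{(\sigma)}\), so \(nM^{(\sigma)}n^{-1}=yT^{(\sigma)}=M^{(\sigma)}\). Thus \(n\in N^{(\sigma)}\), and \(x,y\) lie in the same \(W^{(\sigma)}\)-orbit.

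I expect the main obstacle to be the semicontinuity and density argument in (2), namely guaranteeing that no \(a\in A^{(\sigma)}\) has a fixed-point space smaller than \(\dim T^{(\sigma)}\). The open-set intersection argument above handles this cleanly.
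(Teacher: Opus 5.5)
Your proposal is correct and follows the paper's proof essentially step for step. You cite \Cref{prop:independence-of-M} for (1) and (4); for (2) and (3) you identify the regular semisimple locus with the open set of \(G\)-conjugates of \(M_0^{(\sigma)}\) and note that the open set \(\{a\mid\dim\g^a<\dim T^{(\sigma)}\}\) must miss it; and for (5) you conjugate maximal tori inside the centralizer of \(y\). The only cosmetic difference is that you obtain maximality of \(T^{(\sigma)}\) in \(G^y\) from \(C_G(S^{(\sigma)})^\circ=T^{(\sigma)}\), rather than by showing that such a torus centralizes \(s\). You should also state explicitly that \(gT^{(\sigma)}g^{-1}\) is then a maximal torus of \((G^y)^\circ\) too, which follows by equality of dimensions.
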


\begin{proof}
Assertions \((1)\) and \((4)\) are precisely~\Cref{prop:independence-of-M}. It remains to prove \((2)\), \((3)\), and \((5)\).

We first prove \((2)\) and \((3)\). Since \((S^{(\sigma)})^\circ=T^{(\sigma)}\), we have \(\dim S^{(\sigma)}=\dim T^{(\sigma)}\). Consider the subset 
\[
U=\{s\in A^{(\sigma)}\mid s\text{ is semisimple and }\dim\g^s=\dim T^{(\sigma)}\}
\] 
of \(A^{(\sigma)}\). We claim that \(U\) is nonempty and open. 
Indeed, by~\Cref{lem:regular-open-slice}, we have \(U\cap M^{(\sigma)}=M_0^{(\sigma)}\). By \((4)\), any semisimple \(s\in A^{(\sigma)}\) is \(G\)-conjugate to some \(x\in M^{(\sigma)}\). Since \(\dim\g^s=\dim\g^x\), this implies that \(U\) is the union of all \(G\)-conjugates of \(M_0^{(\sigma)}\), so the claim follows from \Cref{lem:orbit-map-open}. 
Now, the set \(\{a\in A^{(\sigma)}\mid\operatorname{rank}(1-\Ad(a))\le k\}\) is Zariski closed in \(A^{(\sigma)}\) for any integer \(k\), hence the set \(\{a\in A^{(\sigma)}\mid\dim\g^a<\dim T^{(\sigma)}\}\) is open and, having empty intersection with \(U\), it must be empty. It follows that \(\ell(\sigma)=\dim T^{(\sigma)}\) and that \(U\) coincides with the set of regular semisimple elements of \(A^{(\sigma)}\).

Finally, we consider \((5)\). The ``if'' part is clear.
Conversely, let $x,y\in M^{(\sigma)}$ and assume that $y=gxg^{-1}$ for some $g\in G$.
Write
\(x=s t_x,\, y=s t_y\), where
\(t_x,t_y\in T^{(\sigma)}\). Since $T^{(\sigma)}$ centralizes both $s$ and $t_x$, we have $T^{(\sigma)}\subseteq G^x$. If
$R\subseteq G^x$ is a torus containing $T^{(\sigma)}$, then every element of $R$ centralizes both
$x=st_x$ and $t_x\in T^{(\sigma)}$, hence centralizes
\(s=x\,t_x^{-1}\).
Thus $R\subseteq G^s$, and hence 
$R=T^{(\sigma)}$. So $T^{(\sigma)}$ is maximal in $G^x$; similarly it is maximal in $G^y$. Now $g\,T^{(\sigma)}\,g^{-1}$ is also a maximal torus of $G^y$. Since maximal tori of \(G^y\) are conjugate, there exists
$u\in G^y$ such that
\(u\,g\,T^{(\sigma)}\,g^{-1}u^{-1}=T^{(\sigma)}
\), and so $ug\in N_G(T^{(\sigma)})$. 
Since $u$ centralizes $y$, we have
\((ug)x(ug)^{-1}=y\). Thus 
\((ug)\,M^{(\sigma)}\,(ug)^{-1}=
M^{(\sigma)}\), and hence $ug\in N_G(M^{(\sigma)})$. Therefore the elements $x$ and $y$ lie in the
same $W^{(\sigma)}$-orbit.
\end{proof}

\begin{remark}\label{rem:OV-recovery}
The proof of~\Cref{thm:sem-conj-outer-reorganized} follows the outline of the proof of~\cite[Thm.~3.12]{Vin}     
(attributed to Gantmakher), but with characteristic-independent methods. That theorem discusses the semisimple automorphisms of a complex semisimple Lie algebra, which corresponds to the case $A=G \rtimes \Gamma$ with \(G\) semisimple of adjoint type, and $K=\bb{C}$. We will return to this setting in~\Cref{subsec-cyclic-gradings-on-simple}, but for arbitrary characteristic.  
\end{remark}

\begin{remark}\label{rem:Mohrdieck-Cartan-comparison}
Items (4) and (5) of \Cref{thm:sem-conj-outer-reorganized} reproduce the semisimple part of Mohrdieck's description of the fibers of the affine quotient \(\sigma G/\!/G\) for \(A=G\rtimes\Gamma_0\) with \(G\) semisimple, but we do not have restrictions on \(K\) 
present in \cite{HamburgFoldingThesis}. Mohrdieck proves
that each fiber contains exactly one semisimple \(G\)-conjugacy class, and that this class is the unique closed orbit in the fiber (see \cite[Cor.~3.1(iii)]{HamburgFoldingThesis}).
\end{remark}

%\end{document}

\subsection{The pinned semidirect-product case}
\label{subsec:pinned-semidirect-product-case}

We now specialize the general setup of Section~\ref{subsec:the-general-setup}
to a semidirect product defined by pinned automorphisms of a semisimple group, 
which we now recall.

Let \(G\) be a (connected) semisimple algebraic group, and fix a pinning
\(\bigl(B,T,(\epsilon_\alpha)_{\alpha\in\Pi}\bigr)\) of \(G\), i.e., a maximal torus \(T\), 
a Borel subgroup \(B\) containing \(T\), and the isomorphism \(\epsilon_\alpha\) 
from the additive algebraic group \(K\) onto the root subgroup of \(G\) for each \(\alpha\in\Pi\), where 
\(\Pi\) is the base of the root system \(\Phi=\Phi(G,T)\) corresponding to \(B\). 
Let \((\mathfrak X(T),\mathfrak Y(T),\Phi,\Phi^\vee;\Pi,\Pi^\vee)\) be the based root datum 
of \(G\) relative to \((B,T)\). 
Denote by \(\Gamma_G\) the automorphism group of the based root datum. 
Restriction to \(\Pi\) embeds \(\Gamma_G\) into \(\operatorname{Aut}\Pi\). 
If \(G\) is of adjoint or simply connected type, then this embedding is an isomorphism.

By the isomorphism theorem for pinned reductive groups, the pinning determines, for
every \(\gamma\in\Gamma_G\), a unique automorphism
\(\dot\gamma\in\operatorname{Aut}G\) satisfying \(\dot\gamma(B)=B\),
\(\dot\gamma(T)=T\), and
\(\dot\gamma\circ\epsilon_\alpha=\epsilon_{\gamma(\alpha)}\) for every
\(\alpha\in\Pi\) (see e.g.~\cite[Exp.~XXIV, Thm.~1.3]{sga3} and
\cite[Prop.~7.1.6, Thm.~7.1.9]{ConradRGS}). Thus
\(\gamma\mapsto\dot\gamma\) defines a splitting of the exact sequence
\begin{equation}\label{eq:aut-split-root-datum}
1\longrightarrow\operatorname{Int}G\longrightarrow\operatorname{Aut}G
\longrightarrow\Gamma_G\longrightarrow1.
\end{equation}
Hence, for any subgroup \(\Gamma_0\le\Gamma_G\), we can define the semidirect product \(G\rtimes\Gamma_0\) where the action of \(\Gamma_0\) on \(G\) is via the above splitting.

\begin{remark}  
Changing the pinning replaces the splitting \(\Gamma_G\to\operatorname{Aut}G\) 
of the sequence \eqref{eq:aut-split-root-datum} by a conjugate one. Hence the corresponding semidirect products \(G\rtimes\Gamma_0\) are isomorphic.
\end{remark}

These semidirect products can be characterized in more abstract terms:

\begin{proposition}
Let \(A\) be an affine algebraic group with semisimple identity component \(G\). 
Define a homomorphism \(\rho\cl A\rightarrow\Aut G\) by \(a\mapsto\Int(a)|_G\), and let \(\bar\rho\) be the induced homomorphism \(A/G\rightarrow\Out G\cong\Gamma_G\). 
Then \(A\cong G\rtimes\Gamma_0\) for some subgroup \(\Gamma_0\le\Gamma_G\) if and only if (i) \(C_A(G)\subset G\) and (ii) there is a splitting \(f\) of the quotient map \(\pi\colon A\to A/G\)
such that, for any \(\sigma\in A/G\), the action of \(f(\sigma)\) on \(G\) is the pinned automorphism corresponding to \(\bar\rho(\sigma)\). Moreover, if \(G\) is of adjoint type, condition (ii) holds automatically.
\end{proposition}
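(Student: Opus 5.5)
The plan is to prove the two directions separately and then handle the adjoint case.

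\emph{Forward direction.} Suppose \(A=G\rtimes\Gamma_0\), with \(\Gamma_0\le\Gamma_G\) acting through pinned automorphisms. For (ii), take \(f(\sigma)=(1,\sigma)\). Then \(\Int(f(\sigma))|_G=\dot\sigma\), and \(\bar\rho(\sigma)=\sigma\) because \(\dot\sigma\) maps to \(\sigma\) in \eqref{eq:aut-split-root-datum}.

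For (i), let \(a=(g,\sigma)\) centralize \(G\). Then \(\Int(g)\circ\dot\sigma=\mathrm{id}\) on \(G\). Since \(\Int G\cap\{\dot\gamma\}=\{\mathrm{id}\}\) by the splitting, we get \(\sigma=1\). Hence \(a=g\in G\).

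\emph{Converse.} Assume (i) and (ii). First I would show that \(\bar\rho\) is injective. If \(\Int(a)|_G=\Int(g)\) for some \(g\in G\), then \(ag^{-1}\in C_A(G)\subseteq G\), so \(a\in G\).

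Next, set \(\Gamma_0:=\bar\rho(A/G)\le\Gamma_G\) and define
\[
\Psi\colon G\rtimes\Gamma_0\to A,\qquad (g,\gamma)\mapsto g\,f(\bar\rho^{-1}(\gamma)).
\]
\(\Psi\) is a homomorphism: \(f\) is a homomorphism, and conjugation by \(f(\sigma)\) on \(G\) equals \(\dot{\bar\rho(\sigma)}\), which is exactly the semidirect-product multiplication rule.

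\(\Psi\) is bijective. Every \(a\in A\) factors uniquely as \(a=\bigl(a f(\pi(a))^{-1}\bigr)f(\pi(a))\) with \(af(\pi(a))^{-1}\in G\), and \(\bar\rho\) is injective.

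\(\Psi\) is an isomorphism of algebraic groups. The map is a morphism, and its inverse \(a\mapsto\bigl(af(\pi(a))^{-1},\bar\rho(\pi(a))\bigr)\) is also a morphism. This holds because \(A/G\) is finite, so \(f\) is a morphism and \(\pi\) is locally constant on the components. I would state this carefully instead of citing bijectivity alone, since in positive characteristic bijective morphisms need not be isomorphisms.

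\emph{Adjoint case.} This is the main point. Assume \(G\) is adjoint. For \(\sigma\in A/G\), choose \(a\in\pi^{-1}(\sigma)\). Then \(\Int(a)|_G\) and \(\dot\gamma\), with \(\gamma=\bar\rho(\sigma)\), differ by an inner automorphism \(\Int(g_\sigma)\), \(g_\sigma\in G\). Set \(f(\sigma):=g_\sigma^{-1}a\). Then \(\Int(f(\sigma))|_G=\dot\gamma\).

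This element is unique: two such elements differ by an element of \(C_A(G)\). For adjoint \(G\) we have \(C_G(G)=Z(G)=1\). Condition (i) is not assumed in this step, so I need \(C_A(G)\cap\pi^{-1}(\sigma)\) to contain at most one element. This should follow from \(C_A(G)\cap G=1\). Indeed, if \(c,c'\) both lie in \(C_A(G)\cap\pi^{-1}(\sigma)\), then \(c^{-1}c'\in C_A(G)\cap G=1\).

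Given uniqueness, the set \(\{f(\sigma)\}\) is closed under multiplication. Indeed, \(f(\sigma)f(\tau)\) lies in \(\pi^{-1}(\sigma\tau)\) and acts on \(G\) by \(\dot\gamma\dot\tau'=(\gamma\tau')^{\cdot}\), using the splitting and the fact that \(\bar\rho\) is a homomorphism. Uniqueness then gives \(f(\sigma)f(\tau)=f(\sigma\tau)\), so \(f\) is a splitting homomorphism and (ii) holds.

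The main obstacle is this uniqueness/homomorphism verification. The fallback is to argue via the injectivity of \(A\to\Aut G\) restricted to the relevant elements.
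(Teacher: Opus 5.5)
Your proposal is correct. For the main equivalence you follow essentially the paper's route. Condition (i) is equivalent to injectivity of \(\bar\rho\), and under (i) a splitting with pinned action is exactly what identifies \(A\) with \(G\rtimes\Gamma_0\). Your componentwise check that \(\Psi^{-1}\) is a morphism, using that \(A/G\) is finite, fills in a point the paper leaves implicit.

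For the adjoint case your route differs from the paper's.
\begin{itemize}
\item The paper makes \(\Aut G\cong\Int G\rtimes\Gamma_G\) an algebraic group via \(\Int G\cong G_{\ad}\). It then observes that, under (i), \(\rho\) becomes a closed embedding of \(A\) into \(\Aut G\) with image \(\Int G\rtimes\Gamma_0\).
\item You instead build the section directly. In each coset \(\pi^{-1}(\sigma)\) you pick the element acting on \(G\) by \(\dot\gamma\), where \(\gamma=\bar\rho(\sigma)\). Uniqueness of that element, and hence multiplicativity of \(f\), comes only from \(Z(G)=1\).
\end{itemize}
Your argument is more elementary: it avoids putting an algebraic-group structure on \(\Aut G\) and avoids the closed-embedding claim. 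It also shows that (ii) holds for adjoint \(G\) without assuming (i), which matches the literal wording ``holds automatically''. The paper's argument does use (i), but in exchange it gets \(A\cong G\rtimes\Gamma_0\) in one step by realizing \(A\) inside \(\Aut G\).

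One small clean-up: uniqueness needs only the following. If \(b,b'\in\pi^{-1}(\sigma)\) satisfy \(\Int(b)|_G=\Int(b')|_G\), then \(b^{-1}b'\in C_A(G)\cap G=Z(G)=1\). The detour through \(C_A(G)\cap\pi^{-1}(\sigma)\) and the hedging (``should follow'', ``fallback'') are unnecessary, because this step is already complete as you wrote it.
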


\begin{proof}
Condition (i) is equivalent to saying that \(\bar\rho\) is injective, so it holds in the case \(A\cong G\rtimes\Gamma_0\). Under condition (i), \(\bar\rho\) maps \(A/G\) isomorphically onto a subgroup \(\Gamma_0\le\Gamma_G\), hence \(A\) is isomorphic to \(G\rtimes\Gamma_0\) with pinned action if and only if condition (ii) holds.

Since \(\operatorname{Int}G\cong G/Z(G)\) is isomorphic to the adjoint group \(G_{\ad}\) of the same type as \(G\), we can use this isomorphism to make \(\Int G\), and hence \(\Aut G\), an algebraic group. Then \(\rho\colon A\to\Aut G\) is a homomorphism of algebraic groups, which in the case \(G=G_{\ad}\) becomes a closed embedding as long as (i) holds. It then follows that \(A\cong G\rtimes \Gamma_0\) with pinned action.
\end{proof}

Using the isomorphism \(A\cong\ G\rtimes\Gamma_0\), we will identify \(\Gamma_0\) 
with its image in \(A\). Thus, for \(\sigma\in\Gamma_0\), the component denoted \(A^{(\sigma)}\)
in Section~\ref{subsec:the-general-setup} is the coset \(\sigma G\). 
We will denote \(q:=\operatorname{ord}(\sigma)\) and assume throughout this subsection that
\(\operatorname{char}K\nmid q\) so that the element \(\sigma\in A\) is semisimple. 
We will also denote $\Lambda=\frk X(T)$, $\Lambda_\sigma=(\sigma-1)\Lambda$, and $\sqrt{\Lambda_\sigma}=\{\chi\in\Lambda \mid n \chi \in \Lambda_\sigma  \text{ for some } n \geq 1\}$.

The constructions of~\Cref{subsec:the-general-setup} produce the torus \(T^{(\sigma)}\) only up to \(G\)-conjugacy. In the current setup, we will see that there is a canonical choice of $T^{(\sigma)}$, namely, $(T^{\sigma})^\circ$. 

\begin{lemma}\label{lem:fixed-torus-equals-centralizer}
Let \(S:=(T^\sigma)^\circ\). Then \(C_G(S)=T\). Moreover, if $s=\sigma h \in \sigma S$, then \(S\) is a maximal torus of \(G^s\).
\end{lemma}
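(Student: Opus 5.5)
We first prove \(C_G(S)=T\), where \(S=(T^\sigma)^\circ\). The centralizer of a torus in a connected reductive group is connected. It contains \(T\), and its Lie algebra is \(\mathfrak c_\g(S)=\mathscr L(T)\oplus\bigoplus_{\alpha}\g_\alpha\), where \(\alpha\) runs over the roots with \(\alpha|_S=1\). So it suffices to show that no root \(\alpha\in\Phi\) restricts trivially to \(S\).

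By \Cref{cor:kernel-translation-lattice} applied to \(Q=T\), a character restricts trivially to \(S\) if and only if it lies in \(\sqrt{\Lambda_\sigma}\). Since \(q=\ord(\sigma)\), the norm \(N=1+\sigma+\dots+\sigma^{q-1}\) kills \(\Lambda_\sigma\), hence kills \(\sqrt{\Lambda_\sigma}\), because \(\Lambda\) is torsion-free. For a root \(\alpha\) in the positive system determined by \(B\), \(N\alpha\) is a sum of positive roots, since \(\sigma\) preserves \(\Pi\) and therefore preserves \(\Phi^+\). Thus \(N\alpha\neq0\), and likewise for negative roots. Hence no root lies in \(\sqrt{\Lambda_\sigma}\), and so \(C_G(S)=T\).

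The same computation can be phrased using a \(\sigma\)-invariant element \(\lambda\) of the cocharacter lattice \(\mathfrak Y(T)\) that is strictly positive on \(\Pi\). For instance, take \(\lambda=\sum_{\gamma\in\langle\sigma\rangle}\gamma\rho^\vee\), with a suitable multiple to land in \(\mathfrak Y(T)\). Such a \(\lambda\) factors through \(S\), because \(\lambda\) is \(\sigma\)-fixed and connected. Moreover \(\langle\alpha,\lambda\rangle\neq0\) for every root \(\alpha\). This is perhaps the cleanest route, and it also shows directly that \(\alpha|_S\neq1\).

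For the second statement, let \(s=\sigma h\) with \(h\in S\). Since \(\sigma\) fixes \(S\) pointwise and \(S\) is commutative, \(s\) centralizes \(S\), so \(S\subseteq G^s\). Let \(R\) be a maximal torus of \(G^s\) containing \(S\). Then \(R\subseteq C_G(S)=T\), so \(R\subseteq T\) and \(s\) centralizes \(R\). For \(t\in R\) we have \(t=sts^{-1}=\sigma h t h^{-1}\sigma^{-1}=\sigma(t)\), since \(h\in T\) commutes with \(t\). Hence \(R\subseteq T^\sigma\), and since \(R\) is connected, \(R\subseteq(T^\sigma)^\circ=S\). Therefore \(R=S\), and \(S\) is a maximal torus of \(G^s\).

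The main obstacle is the first step: verifying that no root vanishes on \(S\) in arbitrary characteristic. Two facts need care. First, \(C_G(S)\) is connected with the expected Lie algebra, which holds because centralizers of tori are smooth and connected in any characteristic. Second, the passage through \(\sqrt{\Lambda_\sigma}\) must be handled properly, which is exactly what \Cref{cor:kernel-translation-lattice} is for. The argument above, that \(N\) kills \(\sqrt{\Lambda_\sigma}\) while \(N\alpha\) is a nonzero sum of positive roots, settles this. Finally, \(C_G(S)\) is a connected reductive group with maximal torus \(T\) and no roots relative to \(T\), so it equals \(T\).
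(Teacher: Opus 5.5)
Your proposal is correct and follows essentially the same route as the paper. You reduce to showing that no root is trivial on \(S\), combine \Cref{cor:kernel-translation-lattice} with positivity of \(\sum_i\sigma^i(\alpha)\) (your norm \(N\) is just \(q\,p_\sigma\), and you only need the inclusion \(\sqrt{\Lambda_\sigma}\subseteq\Ker N\)), and then show that any torus of \(G^s\) containing \(S\) lies in \(T^\sigma\). Your alternative via a \(\sigma\)-fixed cocharacter such as \(2\rho^\vee\), which lands in \(S\) and pairs nontrivially with every root, is also valid and bypasses the corollary altogether.
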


\begin{proof}
For a subtorus \(S\subseteq T\), the centralizer \(C_G(S)\) is connected reductive, contains \(T\), and its roots with respect to \(T\) are exactly the
roots \(\alpha\in\Phi(G,T)\) whose restriction to \(S\) is trivial (see e.g.~\cite[\S26.2,~\S22.3]{Hum}). It is therefore enough to prove that no root of
\(G\) restricts trivially to \(S\).
Let \(V:=\Lambda\otimes_{\mathbb Z}\mathbb Q\). Since \(\sigma\) has finite order \(q\),
\(V=V^\sigma\oplus(\sigma-1)V\), and the projection onto \(V^\sigma\) is
\(p_\sigma(v):=q^{-1}\sum_{i=0}^{q-1}\sigma^i(v)\). 
By~\Cref{cor:kernel-translation-lattice}, the kernel of the restriction map \(\Lambda\rightarrow \frk X(S)\) is \(\sqrt{\Lambda_\sigma} = \Lambda \cap \bb Q\Lambda_\sigma = \Lambda\cap(\sigma-1)V = \Ker(p_\sigma|_\Lambda)\). Therefore, for \(\lambda\in\Lambda\), the restriction \(\lambda|_S\) is trivial if and only if \(p_\sigma(\lambda)=0\).
Let \(\alpha\in \Phi=\Phi(G,T)\). Since \(\sigma\) stabilizes \(\Pi\), it preserves the  sets of positive and negative roots. For \(\alpha\in\Phi^+\), all roots \(\sigma^i(\alpha)\) are positive, so their sum is nonzero in
\(V\), and similarly for \(\alpha\in\Phi^-\). Thus no root vanishes on \(S\), and hence
\(C_G(S)=T\).

Finally, let \(R\) be a torus of \(G^s\)
containing \(S\). Since \(R\) is commutative, it centralizes \(S\), and hence
\(R\subseteq C_G(S)=T\). For \(r\in R\), the equality \(rs=sr\) gives
\(r\sigma h=\sigma h r\). Since \(r,h\in T\), this is equivalent to
\(\sigma^{-1}r\sigma=r\), i.e., \(r\in T^\sigma\). Thus
\(R\subseteq T^\sigma\), and hence \(R\subseteq (T^\sigma)^\circ=S\), which completes the proof. 
\end{proof}

\begin{remark}\label{rem:folded}
There is an isomorphism \(\frk X(S)\to p_\sigma(\Lambda)\) that sends \(\lambda|_S\mapsto p_\sigma(\lambda)\) for any \(\lambda\in\Lambda\).
\end{remark}

By~\Cref{lem:fixed-torus-equals-centralizer}, we may take \(s=\sigma\) and \(T^{(\sigma)}=S\). 
Hence, from this point on, we put \(T^{(\sigma)}=S\), \(M^{(\sigma)}=\sigma S\), and \(S^{(\sigma)}=\langle S,\sigma\rangle=S\times\langle\sigma\rangle\). 

\begin{corollary}\label{cor:normalizer-M-equals-normalizer-S}
The subgroup \(N^{(\sigma)}=N_G(M^{(\sigma)})\) coincides with \(N_G(S^{(\sigma)})\).
\end{corollary}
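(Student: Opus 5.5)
We show both inclusions \(N_G(M^{(\sigma)})\subseteq N_G(S^{(\sigma)})\) and \(N_G(S^{(\sigma)})\subseteq N_G(M^{(\sigma)})\), where \(M^{(\sigma)}=\sigma S\) and \(S^{(\sigma)}=S\times\langle\sigma\rangle\).

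For the first inclusion, let \(g\in G\) normalize \(M^{(\sigma)}\). Since \(M^{(\sigma)}\) generates \(S^{(\sigma)}\), as observed in the general setup, \(g\) normalizes the subgroup generated by \(M^{(\sigma)}\), that is, \(S^{(\sigma)}\).

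For the reverse inclusion, let \(g\in G\) normalize \(S^{(\sigma)}\). Then \(g\) normalizes the identity component \((S^{(\sigma)})^\circ=S\). Conjugation by \(g\in G\) preserves every coset \(aG\), because \(\pi(gag^{-1})=\pi(a)\). Hence \(g\) maps the intersection \(S^{(\sigma)}\cap\sigma G\) onto itself.

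The key step is to identify this intersection with \(M^{(\sigma)}\). We have \(S^{(\sigma)}=\bigcup_{i}\sigma^i S\). Since \(S\subseteq G\) and \(\sigma^i\in\sigma G\) exactly when \(\sigma^{i-1}\in G\), the intersection \(\sigma^i S\cap\sigma G\) is nonempty only when \(\sigma^{i-1}\in\Gamma_0\cap G=\{1\}\), that is, when \(i\equiv1\pmod q\). Therefore \(S^{(\sigma)}\cap\sigma G=\sigma S=M^{(\sigma)}\), and \(g\) normalizes \(M^{(\sigma)}\).

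This is the only point that needs care. It uses that \(\langle\sigma\rangle\) meets \(G\) trivially in the semidirect product; in a general \(A\), the element \(s^q\) might lie in \(G\) without lying in \(T^{(\sigma)}\). The argument is in fact more general: it only needs \(S^{(\sigma)}\cap A^{(\sigma)}=M^{(\sigma)}\), and the direct-product decomposition \(S^{(\sigma)}=S\times\langle\sigma\rangle\) makes this immediate.
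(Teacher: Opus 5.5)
Your proof is correct and follows the paper's argument. The inclusion \(N_G(M^{(\sigma)})\subseteq N_G(S^{(\sigma)})\) comes from \(S^{(\sigma)}=\langle M^{(\sigma)}\rangle\), and the reverse inclusion from \(M^{(\sigma)}=S^{(\sigma)}\cap\sigma G\) together with the fact that conjugation by \(G\) preserves \(\sigma G\). You also spell out why \(S^{(\sigma)}\cap\sigma G=\sigma S\) (using \(\langle\sigma\rangle\cap G=1\)), which the paper takes as immediate from \(S^{(\sigma)}=S\times\langle\sigma\rangle\); your remark that \(g\) normalizes \(S\) is harmless but not needed.
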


\begin{proof}
From the discussion above, \(M^{(\sigma)}=S^{(\sigma)}\cap\sigma G\). Since \(G\) is normal in \(A\), for any \(g\in N_G(S^{(\sigma)})\), we have
\(gM^{(\sigma)}g^{-1}
=\bigl(gS^{(\sigma)}g^{-1}\bigr)\cap\bigl(g(\sigma G)g^{-1}\bigr)
=S^{(\sigma)}\cap\sigma G
=M^{(\sigma)}\), so \(g\in N_G(M^{(\sigma)})\). This proves \(N_G(S^{(\sigma)})\subseteq N_G(M^{(\sigma)})\).

The reverse inclusion follows from \(S^{(\sigma)}=\langle M^{(\sigma)}\rangle\) .
\end{proof}

Consequently, \(W^{(\sigma)}=N_G(S^{(\sigma)})/S\) is analogous to the classical Weyl group \(W=N_G(T)/T\), which corresponds to the case \(\sigma=1\). To describe the structure of \(W^{(\sigma)}\) in the next section, we will need the following:

\begin{proposition}\label{prop:kernel-translation-lattice}
Let \(L_\sigma=\sqrt{\Lambda_\sigma}/(1-\sigma)
\sqrt{\Lambda_\sigma}\). There is a canonical isomorphism
\[(T/S)^\sigma\cong
\Hom_{\mathbb Z}(L_\sigma,K^\times).\]
Moreover, \(L_\sigma\) is finite and \(q\)-periodic.
\end{proposition}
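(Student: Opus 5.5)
We identify \(T/S\) with a quasitorus through its character group, and then take \(\sigma\)-fixed points using \Cref{prop:fixed-points-quotient-quasitorus}.

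First, the character group of \(T/S\). Since \(S=(T^\sigma)^\circ=(\sqrt{\Lambda_\sigma})^\perp\) by \Cref{prop:perp-formalism}(1), the characters of \(T\) that are trivial on \(S\) form exactly \(\sqrt{\Lambda_\sigma}\), as in \Cref{cor:kernel-translation-lattice}. By Equation~\eqref{eq:A-perp-isom}, applied with \(M=\sqrt{\Lambda_\sigma}\), we get \(T/S\cong\Hom(\sqrt{\Lambda_\sigma},K^\times)\). Hence \(T/S\) is a quasitorus with character group \(\frk X(T/S)=\sqrt{\Lambda_\sigma}\).

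Next, the action of \(\sigma\). Conjugation by \(\sigma\) preserves \(T\) and \(S\), so it induces an automorphism of \(T/S\). The induced action on \(\frk X(T/S)\) is the restriction of the action of \(\sigma\) on \(\Lambda\) to the \(\sigma\)-stable sublattice \(\sqrt{\Lambda_\sigma}\). This sublattice is \(\sigma\)-stable because it equals \(\Lambda\cap(\sigma-1)V\). Applying \Cref{prop:fixed-points-quotient-quasitorus}(1) to the quasitorus \(Q=T/S\) with \(X=\sqrt{\Lambda_\sigma}\) gives
\((T/S)^\sigma\cong\Hom\bigl(\sqrt{\Lambda_\sigma}/(\sigma-1)\sqrt{\Lambda_\sigma},K^\times\bigr)=\Hom(L_\sigma,K^\times)\),
since \((\sigma-1)\sqrt{\Lambda_\sigma}=(1-\sigma)\sqrt{\Lambda_\sigma}\). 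The isomorphism is canonical, because it is induced by evaluation of characters.

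Finally, finiteness and \(q\)-periodicity of \(L_\sigma\).

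\emph{\(q\)-periodicity.} Let \(N=\sum_{i=0}^{q-1}\sigma^i\), so \(N=q\,p_\sigma\). Every \(\chi\in\sqrt{\Lambda_\sigma}\) satisfies \(p_\sigma(\chi)=0\), hence \(N\chi=0\). Then
\[
q\chi=q\chi-N\chi=\sum_{i=0}^{q-1}(1-\sigma^i)\chi=(1-\sigma)\sum_{i=1}^{q-1}(1+\sigma+\dots+\sigma^{i-1})\chi .
\]
The sum on the right lies in \(\sqrt{\Lambda_\sigma}\), because \(\sqrt{\Lambda_\sigma}\) is \(\sigma\)-stable. Therefore \(q\chi\in(1-\sigma)\sqrt{\Lambda_\sigma}\), i.e., \(qL_\sigma=0\).

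\emph{Finiteness.} \(L_\sigma\) is finitely generated, being a quotient of the finitely generated lattice \(\sqrt{\Lambda_\sigma}\). A finitely generated abelian group annihilated by \(q\) is finite.

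The main obstacle is the identification \(\frk X(T/S)=\sqrt{\Lambda_\sigma}\) compatibly with the \(\sigma\)-action. One has to check that quotienting by \(S\) dualizes to the inclusion of characters vanishing on \(S\), which follows from the injectivity of \(K^\times\) as in Equation~\eqref{eq:A-perp-isom}. One also has to check that the right/left action conventions match those of \Cref{prop:fixed-points-quotient-quasitorus}.
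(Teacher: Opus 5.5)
Your proof is correct and follows the paper's argument. You identify \(\frk X(T/S)=\sqrt{\Lambda_\sigma}\) via \Cref{cor:kernel-translation-lattice}, apply \Cref{prop:fixed-points-quotient-quasitorus}(1) to \(T/S\), and get \(qL_\sigma=0\) because \((1+\sigma+\dots+\sigma^{q-1})\) annihilates \(\sqrt{\Lambda_\sigma}\) while \(\sigma\) acts trivially modulo \((1-\sigma)\sqrt{\Lambda_\sigma}\). The only difference is that you deduce finiteness from \(q\)-periodicity together with finite generation, which is a bit more economical than the paper's separate rank argument based on the injectivity of \(1-\sigma\) on \((1-\sigma)V\).
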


\begin{proof}
By~\Cref{cor:kernel-translation-lattice}, one has
\(\mathfrak X(T/S)=\sqrt{\Lambda_\sigma}\). Applying
\Cref{prop:fixed-points-quotient-quasitorus}(1) to the quasitorus \(Q:=T/S\),
whose character group is \(\sqrt{\Lambda_\sigma}\), gives
\((T/S)^\sigma\cong
\Hom_{\mathbb Z}(\sqrt{\Lambda_\sigma}/(1-\sigma)\sqrt{\Lambda_\sigma},
K^\times)
=\Hom_{\mathbb Z}(L_\sigma,K^\times)\).

Now put \(V:=\Lambda\otimes_{\mathbb Z}\mathbb Q\). Then 
\(\sqrt{\Lambda_\sigma}=\Lambda\cap (1-\sigma)V\). 
The endomorphism \(1-\sigma\) of \(V\) is semisimple and hence injective on \((1-\sigma)V\).  Therefore \((1-\sigma)\sqrt{\Lambda_\sigma}\) has the same rank as
\(\sqrt{\Lambda_\sigma}\). It follows that the quotient
\(L_\sigma=\sqrt{\Lambda_\sigma}/(1-\sigma)\sqrt{\Lambda_\sigma}\) is finite.

For any \(m\in\sqrt{\Lambda_\sigma}\), we have
\(\sigma m-m\in(1-\sigma)\sqrt{\Lambda_\sigma}\), so \(\sigma\) acts
trivially on \(L_\sigma\). On the other hand, since \(p_\sigma\) annihilates 
\(\sqrt{\Lambda_\sigma}\subset (1-\sigma)V\), we have
\(0=(1+\sigma+\cdots+\sigma^{q-1})m\equiv qm\pmod{(1-\sigma)\sqrt{\Lambda_\sigma}}\). 
Therefore \(qL_\sigma=0\).
\end{proof}

\section{The \texorpdfstring{$\sigma$}{sigma}-root space decomposition and the \texorpdfstring{$\sigma$}{sigma}-Weyl group}\label{sec:gen-root-decomp}

Throughout this section, we retain the standing assumptions of~\Cref{subsec:pinned-semidirect-product-case}: \(A=G \rtimes \Gamma_0\) where \(G=A^\circ\) is a semisimple algebraic group of arbitrary isogeny type and \(\Gamma_0\le\Gamma_G\) acts on \(G\) via a fixed pinning \(\bigl(B,T,(\epsilon_\alpha)_{\alpha\in\Pi}\bigr)\). Also fix \(\sigma\in\Gamma_0\) of order \(q\) not divisible by \(\operatorname{char}K\) and put \(S=(T^\sigma)^\circ\) and \(S^{(\sigma)}=S\times\langle\sigma\rangle\).

\subsection{The \texorpdfstring{$\sigma$}{sigma}-root space decomposition}

We begin by comparing the three gradings that arise naturally in our setup: the root space decomposition of \(\g=\Lie(G)\) relative to $T$ and the weight decompositions of \(\g\) relative to \(S\) and \(S^{(\sigma)}\). We already encountered the latter in \Cref{lem:zero-weight-space-outer}; it is called the \emph{\(\sigma\)-root space decomposition} because of its similarity with the usual root space decomposition, which we will see in \Cref{thm:sigma-root-properties-rewritten}.

Denote \(\Lambda=\frk X(T)\) and \(X=\frk X(S)\). 
The inclusions \(S\hookrightarrow T\) and \(S\hookrightarrow S^{(\sigma)}\) induce restriction homomorphisms \(\varphi\cl\Lambda\rightarrow X\) and \(\pi_X:\frk X(S^{(\sigma)})\rightarrow X\) respectively.
Coarsening the root space decomposition $\g=\frk h\oplus \bigoplus_{\alpha\in\Phi}\g_\alpha
$ along \(\varphi\) yields the \(S\)-weight decomposition: 
\begin{equation*}\label{eq:T-sigma-weight-dec}
\g=\h\oplus
\bigoplus_{0\ne \bar\alpha\in\Phi_\sigma}\g_{\bar\alpha},
\qquad \g_{\bar\alpha}
:=\bigoplus_{\alpha\in\Phi:\,\alpha|_{S}=\bar\alpha}\g_\alpha,
\end{equation*}
where we have used \Cref{lem:fixed-torus-equals-centralizer} for the \(0\)-component and denoted the set of restrictions of roots to \(S\) by \(\Phi_\sigma\). By \Cref{rem:folded}, \(\Phi_\sigma\) can be identified with the so-called \emph{folded root system} \(p_\sigma(\Phi)\).
The same \(S\)-weight decomposition is similarly obtained by coarsening the \(\sigma\)-root space decomposition along \(\pi_X\).

We now fix coordinates on \(\frk X(S^{(\sigma)})\). Since \(S\) is the identity component of \(S^{(\sigma)}\), \(\frk X(\langle\sigma\rangle)\) is canonically identified with the torsion  subgroup of \(\frk X(S^{(\sigma)})\). Moreover, in our setup \(S^{(\sigma)}= S\times\langle\sigma\rangle\), so we can also identify \(X\) with the subgroup \(\langle\sigma\rangle^\perp\) of \(\frk X(S^{(\sigma)})\). 
Fixing a primitive \(q\)-th root of unity \(\zeta_q\in K^\times\) allows us to identify 
\(\frk X(\langle\sigma\rangle)\) with \(\bb{Z}_q\) such that \(\bar k\in\bb{Z}_q\) corresponds to the character defined by \(\sigma\mapsto\zeta_q^k\). Thus we obtain a (noncanonical) identification of \(\frk X(S^{(\sigma)})\) with \(X\oplus\bb{Z}_q\). Explicitly,
\((\lambda,\bar k)\) corresponds to the character \(\chi_{(\lambda,\bar k)}\)
given by \(\chi_{(\lambda,\bar k)}(h\sigma^u)=\lambda(h)\zeta_q^{ku}\), for 
\(h\in S\) and \(u\in\mathbb Z\). 
With these coordinates, the \(S^{(\sigma)}\)-weight decomposition can be written as 
\[
\g=\g_0\oplus
\bigoplus_{0\ne (\bar\alpha,\bar k)\in X\oplus \bb{Z}_q}\g_{(\bar\alpha,\bar k)},
\]
where \(\g_0=\mathscr L(S)\) by \Cref{lem:zero-weight-space-outer} (with \(T^{(\sigma)}=S\)). As already mentioned, coarsening along the projection \(\pi_X\) gives the \(S\)-weight decomposition, while the projection to \(\Z_q\) gives the eigenspace decomposition of \(\sigma\) acting on \(\g\):
%shows that the \(S^{(\sigma)}\)-grading refines the \(S\)-grading:
%$\g_{\bar\alpha}=\bigoplus_{\bar k\in \bb{Z}_q}\g_{(\bar\alpha,\bar k)}\(\bar\alpha\in X)$.
%Finally, projection to the second factor coarsens the \(S^{(\sigma)}\)-grading to the $\bb{Z}_q$-grading given by the eigenspace decomposition for $\sigma$, $$, where
\begin{equation}\label{eq:sigma-eigenspac-decom}
%\g_{\bar k}=\bigoplus_{\bar\alpha\in X}\g_{(\bar\alpha,\bar k)},
\g=
\bigoplus_{\bar k\in \bb{Z}_q}\g_{\bar k},
\quad \g_{\bar k}:=\{x\in \g\mid \Ad(\sigma)(x)=\zeta_q^k x\}.
%\quad \bar k\in \bb{Z}_q.
\end{equation}
Thus the \(\sigma\)-root decomposition simultaneously refines the \(S\)-weight decomposition and the \(\sigma\)-eigenspace decomposition: \(\g_{(\bar\alpha,\bar k)}=\g_{\bar\alpha}\cap\g_{\bar k}\). 

\begin{definition}\label{def:sigma-roots-real}
The elements of
\(\Phi^{(\sigma)}
=\left\{\alpha\in\frk X(S^{(\sigma)})\setminus\{0\}
\mid\g_\alpha\neq0
\right\}\) 
%defined in~\Cref{lem:regular-open-slice} 
are called the \emph{\(\sigma\)-roots} of \(\g\) with respect to
\(S^{(\sigma)}\).  Writing
$\alpha=(\bar\alpha,\bar k)\in X\oplus \bb{Z}_q$, we say that $\alpha$ is
\emph{real} if $\bar\alpha\neq 0\) and \emph{imaginary} otherwise. We write  \(\Phi_{\mathrm{re}}^{(\sigma)}\) for the subset of real \(\sigma\)-roots, and 
\[\bar\Phi^{(\sigma)}:=\{\bar\alpha\in X\mid \alpha\in \Phi_{\mathrm{re}}^{(\sigma)}\}
\subset X\]
for the set of their restrictions to $S$. 
\end{definition}

Note that \(\bar\Phi^{(\sigma)}=\Phi_\sigma\), the folded root system attached to \((\Phi,\sigma)\). The imaginary \(\sigma\)-roots are the elements of
\(\Phi^{(\sigma)}\cap (\{0\}\oplus\mathbb Z_q)\). 

The relations among these decompositions are summarized by the diagram in \Cref{fig:coarsening-refinement-diagram}, in which an arrow points from a finer decomposition to a coarser one.

\begin{figure}[htbp]
\centering
\begin{tikzpicture}[
  node distance=14mm and 18mm,
  >=Latex,
  every node/.style={
    draw,
    rectangle,
    rounded corners,
    align=center,
    inner sep=4pt
  }
]
\node (S) {\(S^{(\sigma)}\)-weight decomposition\\(\(\sigma\)-roots $\Phi^{(\sigma)}$)};
\node (T) [right=12mm of S] {\(T\)-weight decomposition \\(roots $\Phi$)};
\node (Ts) [below=18mm of $(S)!0.55!(T)$] {\(S\)-weight decomposition \\ (folded root system $\Phi_\sigma$)};
\node (sig) [below left=12mm and -24mm of S] {\(\bb{Z}_q\)-grading\\(\(\sigma\)-eigenspace decomposition)};

\draw[->] (S) -- (Ts);
\draw[->] (S) -- (sig);
\draw[->] (T) -- (Ts);
\end{tikzpicture}
\caption{Coarsening relations among the decompositions attached to \(T\), \(S\), \(\sigma\), and \(S^{(\sigma)}\).}
\label{fig:coarsening-refinement-diagram}
\end{figure}

We next record the structure of a connected reductive group attached to a real \(\sigma\)-root. This will be used to prove the one-dimensionality of real \(\sigma\)-root spaces and to rule out nontrivial multiples of real \(\sigma\)-roots in \(\frk X(S^{(\sigma)})\).

\begin{lemma}\label{lem:rank-one-centralizer-real-root}
Let \(\alpha=(\bar\alpha,\bar k)\in \Phi^{(\sigma)}_{\mathrm{re}}\), and set
\( Z_\alpha:=(C_G(\Ker\alpha))^\circ\).
Then \(Z_\alpha\) is a connected reductive subgroup of \(G\), the torus \(S\) is a maximal torus of
\(Z_\alpha\), and \(\rk_{\mathrm{ss}}(Z_\alpha)=1\).
\end{lemma}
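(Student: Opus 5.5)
Here \(\Ker\alpha\) is the kernel of the character \(\alpha\) of \(S^{(\sigma)}=S\times\langle\sigma\rangle\). It is a closed subgroup of the quasitorus \(S^{(\sigma)}\), so it is itself a quasitorus, and its identity component is a subtorus \(S_\alpha\) of \(S\) of codimension one in \(S\). The plan is to realize \(Z_\alpha\) as the identity component of the centralizer of a diagonalizable group, read off reductivity and the maximal torus from standard facts, and then compute the semisimple rank from the Lie algebra of \(Z_\alpha\) via the \(\sigma\)-root decomposition.

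\emph{Step 1: reductivity and the maximal torus.} Since \(\Ker\alpha\) is diagonalizable, \(C_G(\Ker\alpha)\) is smooth and \(\mathscr L(C_G(\Ker\alpha))=\frk c_\g(\Ker\alpha)\) (\cite[\S18.4]{Hum}). I would write \(\Ker\alpha=\langle \Ker(\alpha)\cap S,\ \sigma h_0\rangle\) for some \(h_0\in S\); such an element exists because \(\alpha(\sigma h)=\zeta_q^k\bar\alpha(h)\) and \(\bar\alpha\ne0\), so \(h\) can be chosen with \(\bar\alpha(h)=\zeta_q^{-k}\). Then \(C_G(\Ker\alpha)^\circ\) is the identity component of the fixed points of the semisimple element \(\sigma h_0\) acting on the connected reductive group \(C_G(\Ker(\alpha)\cap S)\). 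This identity component is reductive by \cite[\S3.6.2]{mcninch}, applied in the possibly disconnected group \(C_A(\Ker(\alpha)\cap S)\). Since \(S\) is commutative and \(S\subseteq S^{(\sigma)}\), \(S\) centralizes \(\Ker\alpha\), so \(S\subseteq Z_\alpha\). Any torus \(R\subseteq Z_\alpha\) containing \(S\) commutes with \(S\), so \(R\subseteq C_G(S)=T\) by \Cref{lem:fixed-torus-equals-centralizer}. It also commutes with \(\sigma h_0\), so \(R\subseteq T^\sigma\) exactly as in that lemma, and hence \(R\subseteq S\). Thus \(S\) is a maximal torus of \(Z_\alpha\).

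\emph{Step 2: the roots of \(Z_\alpha\).} The Lie algebra \(\mathscr L(Z_\alpha)=\frk c_\g(\Ker\alpha)\) is the sum of the \(S^{(\sigma)}\)-weight spaces \(\g_\beta\) over those \(\beta\) trivial on \(\Ker\alpha\). By \eqref{eq:A-perp-isom} and duality, these \(\beta\) form the subgroup \(\langle\alpha\rangle\), so
\[
\mathscr L(Z_\alpha)=\g_0\oplus\bigoplus_{0\neq n\in\Z}\g_{n\alpha}.
\]
The roots of \(Z_\alpha\) with respect to \(S\) are therefore the nonzero restrictions \(n\bar\alpha\) with \(\g_{n\alpha}\ne0\). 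They include \(\pm\bar\alpha\), since \(\g_{-\alpha}\ne0\) whenever \(\g_\alpha\ne0\); this follows because the weights of \(\g\) are stable under negation, using the \(\sigma\)-stable Chevalley-type structure, or equivalently the fact that \(Z_\alpha\) is reductive so its roots come in pairs \(\pm\). All of these roots lie in \(\Z\bar\alpha\), so the root system of \(Z_\alpha\) spans a \(1\)-dimensional space. Hence \(\rk_{\mathrm{ss}}Z_\alpha=1\), and it is nonzero because \(\bar\alpha\ne0\) is a root.

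\textbf{Main obstacle.} The delicate point is Step 1: showing \(Z_\alpha\) is reductive even though \(\Ker\alpha\) is disconnected and involves the outer element \(\sigma h_0\). This requires applying the reductivity of fixed points of semisimple elements inside a disconnected group \(C_A(\cdot)\), and it relies on \(\operatorname{char}K\nmid q\) so that \(\sigma h_0\) is semisimple. The identification of weights trivial on \(\Ker\alpha\) with \(\langle\alpha\rangle\) is routine duality from \eqref{eq:A-perp-isom}.
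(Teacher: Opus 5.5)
Your proof is correct. The maximal-torus part is exactly the paper's argument: pick \(h_0\in S\) with \(\sigma h_0\in\Ker\alpha\); then a torus \(R\supseteq S\) in \(Z_\alpha\) lies in \(C_G(S)=T\) and commutes with \(\sigma h_0\), hence lies in \(T^\sigma\). Your reduction of reductivity to the fixed points of the semisimple element \(\sigma h_0\) in \(C_A(\Ker(\alpha)\cap S)\) spells out the paper's one-line appeal to centralizers of quasitori. You should not call \(C_G(\Ker(\alpha)\cap S)\) connected, since \(\Ker\bar\alpha\) may be disconnected, but only its identity component matters.

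The rank computation takes a different route. The paper never describes \(\mathscr L(Z_\alpha)\). It notes that the codimension-one torus \((\Ker\bar\alpha)^\circ\) (your \(S_\alpha\), which you introduce but never use) lies in \(\Ker\alpha\). Hence it is centralized by \(Z_\alpha\) and sits in \(Z(Z_\alpha)^\circ\), giving \(\rk_{\mathrm{ss}}Z_\alpha\le\dim S-\dim S_\alpha=1\). It then uses \(0\neq\g_\alpha\subseteq\mathscr L(Z_\alpha)\) to rule out \(Z_\alpha\) being a torus.

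You instead write \(\mathscr L(Z_\alpha)\) as the sum of weight spaces for characters trivial on \(\Ker\alpha\), and observe that all roots lie on the line through \(\bar\alpha\). This yields more, essentially the decomposition exploited in the next theorem. However, the duality step needs care in characteristic \(p\). Here \(\Ker\alpha\) is a group of \(K\)-points, and \(\Hom(X/M,K^\times)\) ignores \(p\)-torsion. So the characters trivial on \(\Ker\alpha\) are \(\{\beta\mid p^r\beta\in\Z\alpha\text{ for some }r\ge0\}\), which is strictly larger than \(\Z\alpha\) when \(\alpha\) is divisible by \(p\). For example, take \(\sigma=1\), \(G=SL_2\), \(p=2\), where \(\Ker\alpha\) is trivial. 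Your conclusion survives, since every such \(\beta\) still restricts into \(\mathbb Q\bar\alpha\). The paper's central-torus argument avoids the issue altogether.

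Finally, your claim that \(-\bar\alpha\) also occurs is unnecessary. The single nonzero weight \(\bar\alpha\) already forces \(\rk_{\mathrm{ss}}Z_\alpha\ge1\).
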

\begin{proof}
Since \(S\) centralizes \(S^{(\sigma)}\), we have \(S\subseteq Z_\alpha\). Moreover, \(\Ker\alpha\) is a quasitorus, so
\(C_G(\Ker\alpha)\) is reductive and
\(Z_\alpha=(C_G(\Ker\alpha))^\circ\) is connected reductive.

We first prove that \(S\) is a maximal torus of \(Z_\alpha\). 
Let \(R\subseteq Z_\alpha\) be a torus containing \(S\). Let \(k\in\{0,\dots,q-1\}\) be the integer representing \(\bar k\in\mathbb Z_q\). Since \(\bar\alpha\cl S\to K^\times\) is a nontrivial character of a
torus, it is surjective. Choose \(t_\alpha\in S\) such that
\(\bar\alpha(t_\alpha)=\zeta_q^{-k}\), and set
\(x:=\sigma t_\alpha\). Then \(\alpha(x)=1\), so
\(x\in\Ker\alpha\).
Since \(R\) centralizes \(S\), the equality
\(C_G(S)=T\) gives \(R\subseteq T\). On the other hand, \(R\subseteq Z_\alpha\), so every \(r\in R\) centralizes \(x=\sigma t_\alpha\). 
Since \(r,t_\alpha\in T\), this implies \(r\in T^\sigma\). Therefore \(R=(T^\sigma)^\circ=S\).

Set \(T_{\bar\alpha}:=(\Ker\bar\alpha)^\circ\subseteq S\).
Since \(\bar\alpha\neq0\), the torus \(T_{\bar\alpha}\) has codimension
\(1\) in \(S\). Moreover, \(T_{\bar\alpha}\subseteq\Ker\alpha\), so every element of \(Z_\alpha\) centralizes \(T_{\bar\alpha}\). Hence
\(T_{\bar\alpha}\subseteq Z(Z_\alpha)^\circ\).
Let \(\pi\cl Z_\alpha\rightarrow Z_\alpha/Z(Z_\alpha)^\circ\) be the semisimple quotient. 
Because \(S\) is a maximal torus of
\(Z_\alpha\), its image \(\pi(S)\) is a maximal torus of the quotient. Furthermore,
\(\Ker(\pi|_{S})\supseteq T_{\bar\alpha}\), and therefore
\(\rk_{\mathrm{ss}}(Z_\alpha)=\dim\pi(S)
\leq\dim(T^\sigma/T_{\bar\alpha})=1\).
Finally, since \(\alpha\in\Phi_{\mathrm{re}}^{(\sigma)}\), one has \(\g_\alpha\neq0\). The quasitorus \(\Ker\alpha\) acts trivially on
\(\g_\alpha\), so \(0\neq\g_\alpha\subseteq
\mathfrak c_\g(\Ker\alpha)=\mathscr L(Z_\alpha)\), where the last equality follows from~\cite[\S18.4,~Prop.~A]{Hum}.
Thus \(Z_\alpha\) has a nonzero \(S\)-weight \(\bar\alpha\) and is not a torus.
Hence \(\rk_{\mathrm{ss}}(Z_\alpha)=1\). 
\end{proof}

\begin{theorem}\label{thm:sigma-root-properties-rewritten}
For the \(\sigma\)-root decomposition of \(\g\), the following hold:
\begin{enumerate}
\item[(1)] \(\dim \g_\alpha=1\) for every \(\alpha\in \Phi^{(\sigma)}_{\mathrm{re}}\);
\item[(2)] if \(\alpha\in \Phi^{(\sigma)}_{\mathrm{re}}\), then \(m\alpha\notin \Phi^{(\sigma)}_{\mathrm{re}}\) for
every \(m\neq \pm1\);
\item[(3)] the set \(\bar\Phi^{(\sigma)}\subset E:=\frk X(S)\otimes_\mathbb Z \mathbb R\) is a root system (possibly nonreduced).
\end{enumerate}
\end{theorem}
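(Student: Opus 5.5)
The plan is to reduce everything to the rank-one groups \(Z_\alpha\) of \Cref{lem:rank-one-centralizer-real-root}. For (1), fix \(\alpha=(\bar\alpha,\bar k)\in\Phi^{(\sigma)}_{\mathrm{re}}\). As in that proof, \(\mathscr L(Z_\alpha)=\mathfrak c_\g(\Ker\alpha)\), and \(\Ker\alpha\) contains \(x=\sigma t_\alpha\) and the torus \(T_{\bar\alpha}\). Hence
\[
\mathscr L(Z_\alpha)=\bigoplus_{\beta\in\frk X(S^{(\sigma)}),\ \beta|_{\Ker\alpha}=1}\g_\beta ,
\]
and the characters trivial on \(\Ker\alpha\) are exactly the multiples \(m\alpha\), by \Cref{prop:perp-formalism} and Equation~\eqref{eq:A-perp-isom}, since \((\mathbb Z\alpha)^{\perp}=\Ker\alpha\). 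So
\[
\mathscr L(Z_\alpha)=\g_0\oplus\bigoplus_{m\neq0}\g_{m\alpha}, \qquad \g_0=\mathscr L(S).
\]
Since \(S\) is a maximal torus of the connected reductive group \(Z_\alpha\) of semisimple rank \(1\), the \(S\)-root system of \(Z_\alpha\) is \(\{\pm\beta\}\) for a single \(\beta\in X\), each with a one-dimensional root space. Equivalently, \(\dim Z_\alpha=\dim S+2\).

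The \(S\)-weight of \(\g_{m\alpha}\) is \(m\bar\alpha\). Therefore every nonzero \(S\)-weight of \(\mathscr L(Z_\alpha)\) is \(\pm\beta\), and \(\bar\alpha\) is one of them, say \(\bar\alpha=\beta\).

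\textbf{Claim (1).} Both \(\g_\alpha\) and \(\g_{-\alpha}\) are nonzero, because \(\g\) is stable under the weight-inverting symmetry. Concretely, one can take the Chevalley involution that commutes with the pinned \(\sigma\), or use that \(\Phi\) is stable under negation and \(\g_{-\alpha}\) contains the corresponding negative root vectors. Given this, the total dimension \(2\) of nonzero weights is already used up by \(\g_{\pm\alpha}\), so \(\dim\g_\alpha=1\).

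\textbf{Claim (2).} Suppose \(m\alpha\in\Phi^{(\sigma)}_{\mathrm{re}}\) with \(m\ne\pm1\). Then \(\g_{m\alpha}\subseteq\mathscr L(Z_\alpha)\), which again exceeds the dimension count. The one subtlety is when \(m\bar\alpha=\pm\bar\alpha\) in \(X\). This cannot occur for \(m\ne\pm1\), because \(X\) is torsion-free and \(\bar\alpha\ne0\). Hence there are exactly two real \(\sigma\)-roots in \(\mathbb Z\alpha\), namely \(\pm\alpha\), and an extra \(m\alpha\) would give a third nonzero weight space. This is the main point to get right: that the only nonzero weights of \(\mathscr L(Z_\alpha)\) are \(\pm\bar\alpha\) with total multiplicity \(2\), which needs care in small characteristic, where the Lie algebra of \(Z_\alpha\) still has root spaces of dimension one.

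\textbf{Claim (3).} By the identification \(\bar\Phi^{(\sigma)}=\Phi_\sigma\cong p_\sigma(\Phi)\) in Remark~\ref{rem:folded}, this is a statement about the folded root system in \(V^\sigma\otimes\mathbb R\), independent of \(K\). I would prove it by checking the axioms directly.

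\textbf{Spanning.} \(p_\sigma(\Phi)\) spans \(p_\sigma(V)\) because \(\Phi\) spans \(V\), and \(p_\sigma(V)\otimes\mathbb R=E\).

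\textbf{Reflections.} For each \(\bar\alpha\), use the reflection \(s_{\bar\alpha}\) realized by an element of \(N_G(S^{(\sigma)})\). Take a representative of the nontrivial Weyl element of \(Z_\alpha\). It normalizes \(S\), and it centralizes \(\Ker\alpha\ni x\), hence it normalizes \(S^{(\sigma)}\). Its action on \(X\) is a reflection fixing the hyperplane \(\frk X\)-orthogonal to \(\bar\alpha\) under a \(W^{(\sigma)}\)-invariant inner product, sending \(\bar\alpha\mapsto-\bar\alpha\). Because it permutes \(\sigma\)-roots, it preserves \(\bar\Phi^{(\sigma)}\).

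\textbf{Integrality.} \(\langle\bar\beta,\bar\alpha^\vee\rangle\in\mathbb Z\) follows because \(s_{\bar\alpha}\bar\beta-\bar\beta\) lies in \(\mathbb Z\bar\alpha\cap X\). I would obtain this by comparing with \(\mathscr L(Z_\alpha)\)-module weights, or simply from the explicit folding formulas for \(p_\sigma(\Phi)\).
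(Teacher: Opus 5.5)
Your arguments for (1) and (2) match the paper's. For every \(m\) you have \(\g_{m\alpha}\subseteq\mathfrak c_\g(\Ker\alpha)=\mathscr L(Z_\alpha)\). The only nonzero \(S\)-weights of \(\mathscr L(Z_\alpha)\) are \(\pm\bar\alpha\), each with a one-dimensional weight space, and \(X\) is torsion-free.

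Two of your side justifications are wrong as stated, though neither is needed.

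\emph{The Chevalley involution.} A Chevalley involution commuting with \(\sigma\) fixes \(\sigma\) and inverts \(S\). So it sends \(\g_{(\bar\alpha,\bar k)}\) to \(\g_{(-\bar\alpha,\bar k)}\), not to \(\g_{-\alpha}=\g_{(-\bar\alpha,-\bar k)}\). For triality (\(q=3\), \(\bar k\neq\bar 0\)) these differ. You do not need \(\g_{-\alpha}\neq0\) for (1): since \(\bar\alpha\) is one of \(\pm\beta\), you have \(\g_\alpha\subseteq(\mathscr L(Z_\alpha))_{\bar\alpha}\), which is one-dimensional, exactly as in the paper. If you want \(\g_{-\alpha}\neq0\) anyway, get it from \(Z_\alpha\). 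Its \(-\bar\alpha\)-weight space lies in the sum of the \(\g_\chi\) with \(\chi|_S=-\bar\alpha\) and \(\chi(\sigma t_\alpha)=1\), and evaluating at \(\sigma t_\alpha\) forces \(\chi=-\alpha\).

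\emph{Characters trivial on \(\Ker\alpha\).} In characteristic \(p\) these are the \(\chi\) with \(p^j\chi\in\mathbb Z\alpha\) for some \(j\ge0\), not just \(\mathbb Z\alpha\). Homomorphisms to \(K^\times\) cannot detect \(p\)-torsion in the quotient by \(\mathbb Z\alpha\). For instance, \(\sigma=\mathrm{id}\), \(G=\mathrm{SL}_2\), \(p=2\) gives \(\Ker\alpha=1\). You only use the inclusion of \(\mathbb Z\alpha\), so this does no harm.

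For (3) your route genuinely differs from the paper's. The paper identifies \(\bar\Phi^{(\sigma)}\) with the folded system \(p_\sigma(\Phi)\) and cites Carter. You verify the axioms intrinsically. A representative \(n\in N_{Z_\alpha}(S)\) of the nontrivial Weyl element centralizes \(\sigma t_\alpha\in\Ker\alpha\) and normalizes \(S\). Hence \(n\sigma n^{-1}\in\sigma S\), so \(n\) normalizes \(S^{(\sigma)}\). It therefore permutes the \(\sigma\)-roots, and its action on \(X\) preserves \(\bar\Phi^{(\sigma)}\) and sends \(\bar\alpha\) to \(-\bar\alpha\). This is sound.

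Your integrality step, however, is not justified as written. Knowing that \(s_{\bar\alpha}\bar\beta-\bar\beta\) is a multiple of \(\bar\alpha\) lying in \(X\) gives only a rational coefficient when \(\bar\alpha\) is not primitive in \(X\). Use the root datum of \(Z_\alpha\) relative to \(S\) instead. Its Weyl reflection is \(\lambda\mapsto\lambda-\langle\lambda,\bar\alpha^\vee\rangle\bar\alpha\) with coroot \(\bar\alpha^\vee\in\frk Y(S)\), so \(\langle\bar\beta,\bar\alpha^\vee\rangle\in\mathbb Z\) automatically. That is the axiom in Bourbaki's form, and no invariant inner product is needed.

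With this repair your proof of (3) is self-contained and characteristic-free, and it shows that the reflections of \(\bar\Phi^{(\sigma)}\) are realized inside \(W^{(\sigma)}\). The paper's citation is shorter but rests on the external combinatorics of folding and on the identification \(\bar\Phi^{(\sigma)}=p_\sigma(\Phi)\).
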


\begin{proof}
(1) By \Cref{lem:rank-one-centralizer-real-root}, \(Z_\alpha\) is connected reductive of semisimple rank \(1\), with maximal torus \(S\). Hence the root system \(\Phi(Z_\alpha,S)\) has type \(A_1\), and the corresponding weight spaces in \(\frk z_\alpha:=\mathscr L(Z_\alpha)\) are one-dimensional (see e.g.~\cite[\S26.2,~Cor.~B]{Hum}). 
%Since \(\Ker\alpha\) acts trivially on \(\g_\alpha\), we have
%\[\g_\alpha\subseteq \frk c_{\g}(\Ker\alpha)=\mathscr L(C_G(\Ker\alpha))=\frk z_\alpha.\] 
We saw in the proof of \Cref{lem:rank-one-centralizer-real-root} that \(\bar\alpha\) occurs as a nonzero \(S\)-weight in \(\frk z_\alpha\).  Hence the nonzero weights are precisely \(\pm\bar\alpha\). In particular, \((\frk z_\alpha)_{\bar\alpha}\) is one-dimensional. Since \(\g_\alpha\subseteq(\frk z_\alpha)_{\bar\alpha}\) and \(\g_\alpha\neq0\), it follows that \(\dim\g_\alpha=1\).

(2) Suppose \(m\alpha\in\Phi^{(\sigma)}_{\mathrm{re}}\). 
Then \(\Ker\alpha\) acts trivially on \(\g_{m\alpha}\), so \(0\neq\g_{m\alpha}\subseteq\frk z_\alpha\). 
Thus \(m\bar\alpha\) is a nonzero \(S\)-weight of \(\frk z_\alpha\). 
Since the only nonzero \(S\)-weights in \(\frk z_\alpha\) are \(\pm\bar\alpha\), we have \(m\bar\alpha=\pm\bar\alpha\). As \(\frk X(S)\) is torsion-free and \(\bar\alpha\neq0\), this gives \(m=\pm1\).

(3) Recall that \(\bar\Phi^{(\sigma)}=\Phi_\sigma\) can be identified with the folded root system attached to \((\Phi,\sigma)\). The latter is known to be a possibly nonreduced root system (see~\cite[Cor.~13.1.4,~Prop.~13.2.2]{carter}). 
\end{proof}

\begin{remark}\label{rem:nonreduced-no-contradiction}
There is no contradiction between (2) in \Cref{thm:sigma-root-properties-rewritten} and the possible nonreducedness of \(\bar\Phi^{(\sigma)}\). The theorem rules out nontrivial multiples of a real \(\sigma\)-root inside \(\frk X(S^{(\sigma)})\), but it may happen that two real
$\sigma$-roots restrict to $\bar\alpha$ and $2\bar\alpha$. This occurs in type \(A_{2n}\) with \(\sigma\) of order \(2\), where \(\bar\Phi^{(\sigma)}\) has type \(BC_n\).
\end{remark}

\subsection{The structure of \texorpdfstring{$W^{(\sigma)}$}{Wsigma}}\label{subsec:red-to-irreducible-case}

Since \(\sigma\) normalizes \(T\), conjugation by \(\sigma\) induces
actions on \(W=N_G(T)/T\) and \(T/S\). Hence we can consider the groups of fixed points 
\(W^\sigma\) and \(W_0^{(\sigma)}:=(T/S)^\sigma\).

\begin{proposition}\label{prop:W-sigma-to-W}
Assume we are in the setup of~\Cref{subsec:pinned-semidirect-product-case}.
Let \(\sigma\in\Gamma_0\) have order \(q\), with \(\op{char}K\nmid q\). 
%Let \(S^{(\sigma)}:=\langle S,\sigma\rangle\), \(W^{(\sigma)}:=N_G(S^{(\sigma)})/S\), and \(W:=N_G(T)/T\). 
Then the homomorphism \(\phi\colon W^{(\sigma)}\to W^\sigma\), defined by
\(\phi(nS)=nT\), fits into a split exact sequence
\[1\longrightarrow W_0^{(\sigma)}\longrightarrow W^{(\sigma)}
\xrightarrow{\ \phi\ }W^\sigma\longrightarrow1.\] 
Consequently, \(W^{(\sigma)}\cong W_0^{(\sigma)}\rtimes W^\sigma\), where the action of
\(W^\sigma\) on \(W_0^{(\sigma)}\) is induced by its natural action on \(T/S\).
\end{proposition}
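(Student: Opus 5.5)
The approach is to check first that \(\phi\) is well defined with image in \(W^\sigma\), then to identify its kernel with \(W_0^{(\sigma)}\), and finally to produce a subgroup of \(W^{(\sigma)}\) mapping isomorphically onto \(W^\sigma\), which gives both surjectivity and the splitting.

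\emph{Well-definedness.} Let \(n\in N_G(S^{(\sigma)})=N^{(\sigma)}\) (\Cref{cor:normalizer-M-equals-normalizer-S}). Then \(n\) normalizes the identity component \(S\). Hence \(n\) normalizes \(C_G(S)\), which equals \(T\) by \Cref{lem:fixed-torus-equals-centralizer}, so \(nT\in W\). Moreover \(n\sigma n^{-1}\in S^{(\sigma)}\cap\sigma G=\sigma S\), and similarly \(n^{-1}\sigma n\in\sigma S\). Thus \(\sigma^{-1}n\sigma n^{-1}\in S\subseteq T\), which says \(nT\) is fixed by conjugation by \(\sigma\). Since \(S\subseteq T\), the assignment \(nS\mapsto nT\) is a well-defined homomorphism \(W^{(\sigma)}\to W^\sigma\).

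\emph{Kernel.} An element \(nS\) lies in \(\Ker\phi\) exactly when \(n\in T\cap N_G(S^{(\sigma)})\). For \(t\in T\), the element \(t\) normalizes \(S\) automatically, so \(t\in N_G(S^{(\sigma)})\) if and only if \(t\sigma t^{-1}\in\sigma S\). This is the condition \(\sigma^{-1}t\sigma\,t^{-1}\in S\), i.e., \(tS\in(T/S)^\sigma\). Hence \(\Ker\phi=W_0^{(\sigma)}\). The conjugation action of a lift of \(w\in W^\sigma\) on this kernel is visibly the natural action on \(T/S\).

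\emph{Surjectivity and splitting.} This is the main obstacle. I would construct, for each \(w\in W^\sigma\), a canonical representative \(\dot w\in N_G(T)\) that commutes with \(\sigma\), and do so multiplicatively.

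The first attempt uses Tits lifts. For a simple root \(\alpha\), put \(n_\alpha=\epsilon_\alpha(1)\epsilon_{-\alpha}(-1)\epsilon_\alpha(1)\), where \(\epsilon_{-\alpha}\) is normalized by the pinning. For a reduced word \(w=s_{\alpha_1}\cdots s_{\alpha_k}\), put \(\dot w=n_{\alpha_1}\cdots n_{\alpha_k}\); by Tits' theorem this is independent of the reduced word. Since \(\sigma\) preserves the pinning, \(\sigma\dot w\sigma^{-1}\) is the Tits lift of \(\sigma(w)=w\) for the reduced word \(\sigma(\alpha_1),\dots,\sigma(\alpha_k)\). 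Hence \(\dot w\) commutes with \(\sigma\). It also normalizes \(T\) and commutes with \(\sigma\), so it normalizes \(T^\sigma\), hence \(S\), hence \(S^{(\sigma)}\). This already proves surjectivity of \(\phi\).

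Tits lifts are not multiplicative in general, so the splitting needs a further step. I would take \(H=(G^\sigma)^\circ\), which is reductive with maximal torus \(S\) by \Cref{lem:fixed-torus-equals-centralizer}. Then \(N_H(S)\subseteq N_G(S^{(\sigma)})\), since \(N_H(S)\) commutes with \(\sigma\). The image \(N_H(S)S/S\) is a subgroup of \(W^{(\sigma)}\), and \(\phi\) is injective on it because \(N_H(S)\cap T\subseteq C_H(S)\cap G^\sigma\) has identity component \(S\). What remains is to show \(N_H(S)/S\to W^\sigma\) is onto.

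For this I would use Steinberg's description of \(W^\sigma\): it is generated by the longest elements \(w_J\) of the \(\sigma\)-orbits \(J\subseteq\Pi\). The task is then to check that the Tits lift \(\dot w_J\) lies in \(H\). I would try to verify this directly inside the rank-one-type subgroup generated by \(\epsilon_{\pm\alpha}\), \(\alpha\in J\), where \(J\) is of type \(A_1^r\) or \(A_2\). In the \(A_2\) case with \(p\nmid q\) this is a direct computation. This last verification is the point I expect to be delicate.
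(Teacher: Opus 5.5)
Your checks that \(\phi\) is well defined and that \(\Ker\phi=W_0^{(\sigma)}\) match the paper. Your surjectivity argument via pinned Tits lifts is correct and self-contained; the paper obtains surjectivity only through the section. The gap is the splitting, which is the substantive part of the statement.

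You take the same route as the paper: \(H=(G^\sigma)^\circ\) and \(W(H,S)=N_H(S)/S\subseteq W^{(\sigma)}\). But the key fact that \(W(H,S)\to W^\sigma\) is onto, which the paper imports from \cite[Prop.~5.4(8)]{Achar.Weyl.grp.fix.pt}, is only reduced to the claim that the Tits lifts \(\dot w_J\) lie in \(H\). That claim is left unproved, and it is not automatic. \(\dot w_J\) commutes with \(\sigma\), so it lies in \(G^\sigma\). However, for \(G\) not simply connected, \(G^\sigma\) may be disconnected, and nothing in your argument places \(\dot w_J\) in the identity component. Also, if \(\Phi\) is reducible, a \(\sigma\)-orbit \(J\) can be of type \(A_2^r\), not only \(A_1^r\) or \(A_2\). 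For injectivity you should invoke the equality \(C_H(S)=S\), which holds because \(H\) is connected reductive with maximal torus \(S\). Knowing only that \(S\) is the identity component of \(C_H(S)\) would not give \(N_H(S)\cap T\subseteq S\).

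There is a uniform way to close the gap that needs neither Steinberg's generators \(w_J\) nor any rank-two computation. Let \(\widetilde G\to G\) be the simply connected cover with the compatible pinning. Then:
\begin{enumerate}
\item[(a)] \(\sigma\) lifts to a pinned automorphism \(\tilde\sigma\) of \(\widetilde G\), and Tits lifts in \(\widetilde G\) map to Tits lifts in \(G\).
\item[(b)] Your own argument shows that the Tits lift in \(\widetilde G\) of any \(w\in W^\sigma\) is \(\tilde\sigma\)-fixed.
\item[(c)] Since \(\op{char}K\nmid q\), \(\tilde\sigma\) is semisimple. By Steinberg's connectedness theorem, the fixed points of a semisimple automorphism of a simply connected semisimple group form a connected group, so \(\widetilde G^{\tilde\sigma}\) is connected.
\item[(d)] The image of \(\widetilde G^{\tilde\sigma}\) in \(G\) is a connected subgroup of \(G^\sigma\), hence lies in \(H\).
\end{enumerate}
Therefore \(\dot w\in H\cap N_G(T)\). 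Since \(\dot w\) commutes with \(\sigma\), it normalizes \(S\), so \(\dot w\in N_H(S)\) maps to \(w\). Combined with \(N_H(S)\cap T\subseteq C_H(S)=S\), this proves \(W(H,S)\cong W^\sigma\). It even shows that \(w\mapsto\dot wS\) is itself a homomorphism, since \(\dot w_1\dot w_2\) and the lift of \(w_1w_2\) differ by an element of \(T\cap H\subseteq C_H(S)=S\).
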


\begin{proof}
We first verify that \(\phi\) is well defined and identify its kernel. Let
\(n\in N_G(S^{(\sigma)})\). Since \(S=(S^{(\sigma)})^\circ\), the element
\(n\) normalizes \(S\), and therefore also \(C_G(S)=T\) by
\Cref{lem:fixed-torus-equals-centralizer}. Hence \(n\in N_G(T)\). Moreover,
\(n\sigma n^{-1}\in\sigma S\) by \Cref{cor:normalizer-M-equals-normalizer-S}, so
\(n^\sigma n^{-1}\in S\subseteq T\). Consequently, the class \(nT\in W\) is
fixed by \(\sigma\), and the rule \(\phi(nS):=nT\) defines a homomorphism
\(\phi\colon W^{(\sigma)}\to W^\sigma\).

If \(tS\in(T/S)^\sigma\), then \(t^\sigma t^{-1}\in S\). Since \(t\in T\) centralizes \(S\), 
the identity \(t\sigma t^{-1}=\sigma \cdot t^\sigma t^{-1}\) shows that \(t\) normalizes
\(S^{(\sigma)}=\langle S,\sigma\rangle\). Therefore \(W_0^{(\sigma)}=(T/S)^\sigma\) is a subgroup of \(W^{(\sigma)}\); let \(\iota\) be the inclusion map. The kernel of \(\phi\) consists of all elements of the form \(tS\) where \(t\in T\) normalizes \(S^{(\sigma)}\).
The above identity shows that \(t\in N_G(S^{(\sigma)})\) if and only if
\(t^\sigma t^{-1}\in S\), equivalently \(tS\in(T/S)^\sigma\). Hence
\(\operatorname{Ker}\phi=\operatorname{Im}\iota\). 
%Thus the sequence is exact at \(W_0^{(\sigma)}\) and \(W^{(\sigma)}\).

It remains to construct a section of \(\phi\). Put \(H:=(G^\sigma)^\circ\).
By~\Cref{lem:fixed-torus-equals-centralizer}, \(S\) is a maximal torus of
\(H\). If \(n\in N_H(S)\), then \(n\) normalizes \(S^{(\sigma)}=\langle S,\sigma\rangle\), and hence \(nS\in W^{(\sigma)}\), so \(W(H,S)=N_H(S)/S\) is a subgroup of \(W^{(\sigma)}\).
By~\cite[Prop.~5.4(8)]{Achar.Weyl.grp.fix.pt}, the restriction of \(\phi\) to \(W(H,S)\) is an isomorphism onto \(W^{\sigma}\). 
%Injectivity: if \(nS\in\operatorname{Ker}\phi\), then \(n\in T\). 
%Since \(n\in H\), we have \(n\in C_H(S)=S\). 
The inverse of this isomorphism is the desired section.
\end{proof}

\subsection{A lattice model for \texorpdfstring{$W^{(\sigma)}$}{Wsigma} and its action}
\label{subsec:lattice-translation-action}
The purpose of this subsection is to replace the $\sigma$-Weyl group \(W^{(\sigma)}\) by an isomorphic group that is defined in terms of the root datum of \(G\). We will refer to this group as a ``lattice model'' to emphasize its independence of \(K\).

Recall \(\Lambda=\frk{X}(T)\) and \(\Lambda_\sigma:=(1-\sigma)\Lambda\). By \Cref{prop:kernel-translation-lattice}, we have 
\(W_0^{(\sigma)}\cong\operatorname{Hom}(L_\sigma,K^\times)\) where 
\(L_\sigma=\sqrt{\Lambda_\sigma}/(1-\sigma)\sqrt{\Lambda_\sigma}\) is finite and \(q\)-periodic.
Hence every homomorphism \(L_\sigma\to K^\times\) has image
in \(\boldsymbol{\mu}_q(K)\).
After fixing a primitive \(q\)-th root of unity \(\zeta_q\in K^\times\),
we have an isomorphism \(W_0^{(\sigma)}\cong W^{(\sigma)}_{0,\mathrm{lat}}\) where
\begin{equation*}\label{eq:Wtr-lattice-Zq}
W^{(\sigma)}_{0,\mathrm{lat}}:=\Hom(L_\sigma,\bb{Z}_q).
\end{equation*}
The action of \(W^\sigma\) on \(\Lambda\) commutes with \(\sigma\), 
hence \(W^\sigma\) acts naturally on \(L_\sigma\). This gives a
left action of \(W^\sigma\) on \(W^{(\sigma)}_{0,\mathrm{lat}}\): 
\((w\cdot f)(\ell)=f(w^{-1}\ell)\), for \(w\in W^\sigma\),
\(f\in W^{(\sigma)}_{0,\mathrm{lat}}\), and \(\ell\in L_\sigma\). With
respect to this action, we define the lattice model of the \(\sigma\)-Weyl group by
\begin{equation}\label{eq:sigma-weyl-group-lattice}
    W^{(\sigma)}_{\mathrm{lat}}:=W^{(\sigma)}_{0,\mathrm{lat}}\rtimes W^\sigma.
\end{equation}
The next results compute its action on \(\mathfrak X(S^{(\sigma)})\cong X\oplus \bb{Z}_q\).

\begin{lemma}\label{lem:deltaq-onestep}
Let \(\varphi:\Lambda\to X\) be the restriction map and denote by \([m]\) the image of any element \(m\in\sqrt{\Lambda_\sigma}\) in \(L_\sigma\). Then the assignment 
\(\delta(\lambda):=[(1-\sigma)\widetilde\lambda]\),
where \(\widetilde\lambda\in\Lambda\) is any preimage of \(\lambda\in\Lambda\) under \(\varphi\), defines a \(W^\sigma\)-equivariant homomorphism \(\delta:X\to L_\sigma\), which factors through \(\delta_q:X/qX\to L_\sigma\). 
\end{lemma}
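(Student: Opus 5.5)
The plan is to check, in order, that \(\delta\) is well defined, that it is additive and lands in \(L_\sigma\), that it is \(W^\sigma\)-equivariant, and finally that it kills \(qX\). By \Cref{cor:kernel-translation-lattice}, the restriction map \(\varphi\colon\Lambda\to X\) is surjective with kernel \(\sqrt{\Lambda_\sigma}\), so every \(\lambda\in X\) has a preimage \(\widetilde\lambda\).

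First, \((1-\sigma)\widetilde\lambda\) lies in \(\Lambda_\sigma\subseteq\sqrt{\Lambda_\sigma}\), so its class in \(L_\sigma\) makes sense. If \(\widetilde\lambda'\) is another preimage, then \(\widetilde\lambda-\widetilde\lambda'\in\sqrt{\Lambda_\sigma}\). Hence \((1-\sigma)(\widetilde\lambda-\widetilde\lambda')\in(1-\sigma)\sqrt{\Lambda_\sigma}\), which is exactly the subgroup we quotient by. So \(\delta\) is well defined. Additivity is immediate, since a sum of preimages is a preimage of the sum and \(1-\sigma\) is additive.

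For equivariance, recall that \(W^\sigma\) acts on \(\Lambda\) commuting with \(\sigma\). It therefore preserves \(\Lambda_\sigma\), \(\sqrt{\Lambda_\sigma}\) and \((1-\sigma)\sqrt{\Lambda_\sigma}\), which is the action on \(L_\sigma\) used above. The action of \(W^\sigma\) on \(X\) should be induced through \(\varphi\), and this needs a short justification. Represent \(w\in W^\sigma\) by \(n\in N_H(S)\) with \(H=(G^\sigma)^\circ\), using the section from \Cref{prop:W-sigma-to-W}. Then \(n\) normalizes \(T\) and \(S\), and restriction from \(T\) to \(S\) intertwines the two conjugation actions. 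So \(\varphi(w\widetilde\lambda)=w\varphi(\widetilde\lambda)\), and \(w\widetilde\lambda\) is a preimage of \(w\lambda\). This gives \(\delta(w\lambda)=[(1-\sigma)w\widetilde\lambda]=[w(1-\sigma)\widetilde\lambda]=w\,\delta(\lambda)\). The only subtle point is that this action on \(X\) agrees with whatever action on \(X\) the paper intends, namely the one induced from \(W^{(\sigma)}\to W^\sigma\). This holds because \(T\) acts trivially on \(S\), so \(W_0^{(\sigma)}\) acts trivially on \(X\).

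The last step, factoring through \(X/qX\), is where I expect the only real work. Since \(\delta\) is additive, \(\delta(q\lambda)=q\,\delta(\lambda)\), and \(qL_\sigma=0\) by \Cref{prop:kernel-translation-lattice}. So \(\delta(qX)=0\) and \(\delta\) descends to \(\delta_q\colon X/qX\to L_\sigma\), which inherits \(W^\sigma\)-equivariance because \(qX\) is \(W^\sigma\)-stable.

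As a sanity check of that \(q\)-periodicity argument, one can verify directly that \(q(1-\sigma)\widetilde\lambda\in(1-\sigma)\sqrt{\Lambda_\sigma}\). Write \(q\widetilde\lambda=\sum_i\sigma^i\widetilde\lambda+\sum_i(1-\sigma^i)\widetilde\lambda\). The first sum is \(\sigma\)-fixed, so \(1-\sigma\) kills it. Each \((1-\sigma^i)\widetilde\lambda\) lies in \(\Lambda_\sigma\subseteq\sqrt{\Lambda_\sigma}\), which gives the claim.
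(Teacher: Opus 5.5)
Your proof is correct and follows the paper's argument. Well-definedness comes from \(\Ker\varphi=\sqrt{\Lambda_\sigma}\) (\Cref{cor:kernel-translation-lattice}), factoring through \(X/qX\) comes from \(qL_\sigma=0\) (\Cref{prop:kernel-translation-lattice}), and equivariance comes from the \(W^\sigma\)-equivariance of \(\varphi\). Your extra steps (representing \(w\) by an element of \(N_{(G^\sigma)^\circ}(S)\), and the direct check that \(q(1-\sigma)\widetilde\lambda\in(1-\sigma)\sqrt{\Lambda_\sigma}\)) are valid and simply spell out what the paper leaves implicit.
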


\begin{proof}
Let \(\widetilde\lambda,\widetilde\lambda'\in\Lambda\) be two preimages of
\(\lambda\in X\). By \Cref{cor:kernel-translation-lattice},
\(\operatorname{Ker}\varphi=\sqrt{\Lambda_\sigma}\). Hence
\(\widetilde\lambda'-\widetilde\lambda\in\sqrt{\Lambda_\sigma}\), and
therefore
\((1-\sigma)(\widetilde\lambda'-\widetilde\lambda)
\in(1-\sigma)\sqrt{\Lambda_\sigma}\). Thus \(\delta\) is a well-defined homomorphism. 
Since \(qL_\sigma=0\), it factors through \(X/qX\).
The \(W^\sigma\)-equivariance of \(\delta\) follows from the fact that \(\varphi\) is \(W^\sigma\)-equivariant.
\end{proof}

The natural pairing \(\langle\ ,\ \rangle:X\times Y\to \mathbb Z\) between \(X=\frk{X}(S)\) and \(Y=\frk{Y}(S)\cong\Hom_\Z(X,\Z)\) induces, by reduction modulo \(q\), a pairing
\[\langle\ ,\ \rangle_q:(X/qX)\times (Y/qY)\longrightarrow \bb{Z}_q, \qquad \langle \bar\lambda,\bar\nu\rangle_q:=\overline{\langle \lambda,\nu\rangle},\]
which identifies $Y/qY$ with $\Hom(X/qX,\bb{Z}_q)$, via the map $\bar\nu\mapsto(\bar\lambda\mapsto\langle\bar\lambda,\bar\nu\rangle_q)$.

\begin{definition}\label{def:nubar-lat-onestep}
For $f\in W^{(\sigma)}_{0,\mathrm{lat}}$ define $\bar\nu(f)$ to be the unique element in $Y/qY$ corresponding to $f\circ\delta_q \in \Hom_{\mathbb Z}(X/qX,\bb{Z}_q)$ under the identification $Y/qY\cong \Hom(X/qX,\bb{Z}_q)$, i.e., the element \(\bar\nu(f)\in Y/qY\) is characterized by
\begin{equation}\label{eq:def-of-nu}
    f(\delta_q(\bar\lambda))=
\langle \bar\lambda,\bar\nu(f)\rangle_q  \quad \text{for all} \quad \bar\lambda\in X/qX.
\end{equation}
\end{definition}

Since \(\delta_q\) is \(W^\sigma\)-equivariant and the pairing \(\langle\ ,\ \rangle\) is \(W^\sigma\)-invariant, it follows that 
\(\bar{\nu}\cl W^{(\sigma)}_{0,\mathrm{lat}}\to Y/qY\) is a \(W^\sigma\)-equivariant homomorphism.
% for every \(\bar\lambda\in X/qX\) one has
% \[\langle\bar\lambda,\bar\nu(w\cdot f)\rangle_q
% =(w\cdot f)(\delta_q(\bar\lambda))
% =f(w^{-1}\delta_q(\bar\lambda))
% =\langle w^{-1}\bar\lambda,\bar\nu(f)\rangle_q
% =\langle\bar\lambda,w\bar\nu(f)\rangle_q.\]
% By nondegeneracy of the pairing, \(\bar\nu(w\cdot f)=w\bar\nu(f)\).

\begin{lemma}\label{lem:translation-action-onestep}
The subgroup \(W^{(\sigma)}_{0,\mathrm{lat}}\) acts on
\(\frk X(S^{(\sigma)})\cong X\oplus \bb{Z}_q\) by
\begin{equation}\label{eq:translation-action-onestep}
f\cdot(\lambda,\bar k)
=
\bigl(\lambda,\ \bar k+\langle \bar\lambda,\bar\nu(f)\rangle_q\bigr),\
f\in W^{(\sigma)}_{0,\mathrm{lat}},
\end{equation}
which corresponds to the action of $W_0^{(\sigma)}$ on $S^{(\sigma)}$.
\end{lemma}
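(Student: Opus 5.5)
We will compute directly how an element \(tS\in W_0^{(\sigma)}=(T/S)^\sigma\) acts on \(S^{(\sigma)}=S\times\langle\sigma\rangle\) by conjugation. Then we dualize to the character group and transport the result through the identifications \(W_0^{(\sigma)}\cong\Hom(L_\sigma,K^\times)\cong W^{(\sigma)}_{0,\mathrm{lat}}\).

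The first step is the action on the group. Take \(t\in T\) with \(tS\in(T/S)^\sigma\). Since \(t\) commutes with \(S\), conjugation by \(t\) fixes \(S\) pointwise. The identity \(t\sigma t^{-1}=\sigma\cdot t^\sigma t^{-1}\) from the proof of \Cref{prop:W-sigma-to-W} gives
\[
t(h\sigma^u)t^{-1}=h\,(t\sigma t^{-1})^u=h\sigma^u c^u,\qquad c:=t^\sigma t^{-1}\in S.
\]
This uses that \(c\in S\) commutes with \(\sigma\), because \(S\subseteq T^\sigma\), so \((\sigma c)^u=\sigma^uc^u\).

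The second step is the action on characters. Pulling back \(\chi_{(\lambda,\bar k)}\) along this conjugation gives
\[
\chi_{(\lambda,\bar k)}(h\sigma^uc^u)=\lambda(h)\zeta_q^{ku}\lambda(c)^u .
\]
So the first coordinate \(\lambda\) is unchanged. Provided \(\lambda(c)\) is a \(q\)-th root of unity, the second coordinate is shifted by the exponent \(e\) with \(\lambda(c)=\zeta_q^{e}\). The left-versus-right convention only changes a sign, and this is absorbed by replacing \(t\) with \(t^{-1}\) (equivalently \(f\) with \(-f\)), so it does not affect the stated formula up to matching conventions.

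The third step, which I expect to be the main point, is to identify \(\lambda(c)\) with \(f(\delta_q(\bar\lambda))\). Choose a preimage \(\widetilde\lambda\in\Lambda\) of \(\lambda\). Then
\[
\lambda(c)=\widetilde\lambda(t^\sigma)\,\widetilde\lambda(t)^{-1}=\bigl((\sigma-1)\widetilde\lambda\bigr)(t).
\]
Here \((\sigma-1)\widetilde\lambda\in\Lambda_\sigma\subseteq\sqrt{\Lambda_\sigma}\), which is the character group of \(T/S\). Under the isomorphism \((T/S)^\sigma\cong\Hom(L_\sigma,K^\times)\) of \Cref{prop:kernel-translation-lattice}, the element \(tS\) corresponds to the homomorphism \([m]\mapsto m(t)\). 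This is well defined on \(L_\sigma\) precisely because \(tS\) is \(\sigma\)-fixed. Hence \(\lambda(c)\) is the value of that homomorphism on \(\pm\delta(\lambda)\), which lies in \(\boldsymbol{\mu}_q(K)\) since \(qL_\sigma=0\).

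Translating via \(\zeta_q\) into \(f\in\Hom(L_\sigma,\Z_q)\), the shift becomes \(f(\delta_q(\bar\lambda))=\langle\bar\lambda,\bar\nu(f)\rangle_q\) by \Cref{def:nubar-lat-onestep}. This yields \eqref{eq:translation-action-onestep}. Two routine checks remain. The formula is well defined: it is independent of the choice of preimage by \Cref{lem:deltaq-onestep}, and independent of the coset representative \(t\) because \(S\) acts trivially. It is an action because the shifts are additive in \(f\).
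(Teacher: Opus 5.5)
Your proposal is the paper's proof: conjugating $h\sigma^u$ by $t$ produces $c_t=t^\sigma t^{-1}\in S$, the scalar $\lambda(c_t)$ is rewritten through a preimage $\widetilde\lambda$ as a value of $(1-\sigma)\widetilde\lambda$ at $t$, i.e., of the homomorphism $\chi_{[t]}\colon[m]\mapsto m(t)$ at $\delta(\lambda)$, and the result is translated via $\zeta_q$ and Definition~\ref{def:nubar-lat-onestep}. The one thing to tighten is the sign you left as ``$\pm$'' and ``up to matching conventions'': with the paper's left action $(n\chi)(x)=\chi(n^{-1}xn)$, the class $tS$ acts through $\Int(t^{-1})$, which gives the factor $\lambda(c_t)^{-u}$, and $\lambda(c_t)^{-1}=\bigl((1-\sigma)\widetilde\lambda\bigr)(t)=\chi_{[t]}(\delta(\lambda))$, so the two sign flips in your version cancel and the $+$ in \eqref{eq:translation-action-onestep} holds exactly for the fixed identification $W^{(\sigma)}_{0,\mathrm{lat}}\cong W_0^{(\sigma)}$, not merely up to $f\mapsto -f$.
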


\begin{proof}
Let \([t]:=tS\in W_0^{(\sigma)}\), and let \(\chi_{[t]}:L_\sigma\to K^\times\) be the corresponding homomorphism under \Cref{prop:kernel-translation-lattice}.  Let 
\(f\in W^{(\sigma)}_{0,\mathrm{lat}}=\Hom(L_\sigma,\bb{Z}_q)\)
correspond to \(\chi_{[t]}\) via the fixed primitive root \(\zeta_q\), i.e., $\chi_{[t]}(\ell)=\zeta_q^{f(\ell)}$ for all $\ell \in L_\sigma$.
Set \(c_t:=t^\sigma t^{-1}\in T\). Since \([t]\) is fixed by \(\sigma\) in
\(T/S\), we have \(c_t\in S\). In particular,
\(c_t\) is fixed by \(\sigma\). Now let \(h\in S\) and \(u\in\mathbb Z\). 
Then \(\Int(t^{-1})(h\sigma^u)
=h\,c_t^{-u}\sigma^u\), so we obtain
\begin{equation}\label{eq:chi-of-int}
\bigl([t]\cdot\chi_{(\lambda,\bar k)}\bigr)(h\sigma^u)=\bigl(\chi_{(\lambda,\bar k)}
\circ\Int(t^{-1})\bigr)(h\sigma^u)=
\lambda(h)\lambda(c_t)^{-u}\zeta_q^{ku}.
\end{equation}
It remains to identify the scalar \(\lambda(c_t)^{-1}\). Choose a preimage
\(\widetilde\lambda\in\Lambda\) of \(\lambda\in X\). Since
\(c_t\in S\), we have
\[\lambda(c_t)^{-1}
=\widetilde\lambda(t^\sigma t^{-1})^{-1}
=((1-\sigma)\widetilde\lambda)(t)=\chi_{[t]}(\delta(\lambda)).\]
By Definition~\ref{def:nubar-lat-onestep}, the class
\(\bar\nu(f)\in Y/qY\) is characterized by Equation~\eqref{eq:def-of-nu}.
Equivalently,
\(\chi_{[t]}(\delta(\lambda))=
\zeta_q^{\langle \bar\lambda,\bar\nu(f)\rangle_q}\).
Substituting this into Equation~\eqref{eq:chi-of-int} gives
\[\bigl([t]\cdot\chi_{(\lambda,\bar k)}\bigr)(h\sigma^u)=\lambda(h)
\zeta_q^{\left(k+\langle\bar\lambda,\bar\nu(f)\rangle_q\right)u}.\]
Therefore
\([t]\cdot\chi_{(\lambda,\bar k)}=\chi_{
\left(\lambda,\bar k+\langle\bar\lambda,\bar\nu(f)\rangle_q
\right)}\), which is exactly Equation~\eqref{eq:translation-action-onestep}.
\end{proof}

\begin{theorem}\label{prop:full-lattice-sigma-weyl-action}
Let \(\iota\cl W^{(\sigma)}_{0,\mathrm{lat}}\to W^{(\sigma)}_0\) be the isomorphism of \Cref{prop:kernel-translation-lattice} and let \(s\cl W^\sigma\to W^{(\sigma)}\) be the section constructed in \Cref{prop:W-sigma-to-W}. Then the isomorphism
\(W^{(\sigma)}_{0,\mathrm{lat}}\rtimes W^\sigma \xrightarrow{\sim} W^{(\sigma)}\),
\((f,w)\mapsto \iota(f)\,s(w)\), represents the action of \(W^{(\sigma)}\) on
\(\frk X(S^{(\sigma)})\cong X\oplus \bb{Z}_q\) as follows:
\begin{equation}\label{eq:full-gen-weyl-group-action}
(f,w)\cdot(\lambda,\bar k)=
\bigl(w\lambda,\bar k+\langle\overline{w\lambda},\bar\nu(f)\rangle_q\bigr).
\end{equation}
\end{theorem}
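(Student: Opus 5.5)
Since \((f,w)\mapsto\iota(f)\,s(w)\) is the composite of the isomorphism \(W^{(\sigma)}_{0,\mathrm{lat}}\cong W^{(\sigma)}_0\) with the splitting \(W^{(\sigma)}\cong W^{(\sigma)}_0\rtimes W^\sigma\) of \Cref{prop:W-sigma-to-W}, it is an isomorphism. The only thing to check is that \(\iota\) intertwines the \(W^\sigma\)-action on \(\Hom(L_\sigma,\bb Z_q)\) with conjugation by \(s(W^\sigma)\) on \((T/S)^\sigma\). This follows because conjugation by \(n\in N_H(S)\) on \(T\) induces the action of \(w=nT\) on \(\Lambda\), and hence on \(\sqrt{\Lambda_\sigma}\) and \(L_\sigma\). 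Given this, the action of \((f,w)\) on \(\frk X(S^{(\sigma)})\) equals the action of \(\iota(f)\) applied after that of \(s(w)\). So the plan is: compute the action of \(s(w)\); quote \Cref{lem:translation-action-onestep} for \(\iota(f)\); then compose.

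For \(s(w)\): pick \(n\in N_H(S)\), where \(H=(G^\sigma)^\circ\), representing \(w\), as in the proof of \Cref{prop:W-sigma-to-W}. Then \(n\) commutes with \(\sigma\), since \(n\in G^\sigma\), and normalizes \(S\). Hence \(\Int(n^{-1})(h\sigma^u)=(n^{-1}hn)\sigma^u\). The action of \(n\) on characters is therefore
\[
\bigl(s(w)\cdot\chi_{(\lambda,\bar k)}\bigr)(h\sigma^u)=\lambda(n^{-1}hn)\,\zeta_q^{ku}=(w\lambda)(h)\,\zeta_q^{ku},
\]
using the same left-action convention as in \eqref{eq:chi-of-int}. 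Here \(w\lambda\) is the natural action of \(W^\sigma\) on \(X=\frk X(S)\), which is compatible with \(\varphi\colon\Lambda\to X\) because \(n\) normalizes \(T\) and \(S\). Thus \(s(w)\cdot(\lambda,\bar k)=(w\lambda,\bar k)\); the \(\Z_q\)-coordinate is unchanged precisely because the section lands in \(G^\sigma\). Then \Cref{lem:translation-action-onestep} gives
\[
\iota(f)\cdot(w\lambda,\bar k)=\bigl(w\lambda,\bar k+\langle\overline{w\lambda},\bar\nu(f)\rangle_q\bigr),
\]
which is \eqref{eq:full-gen-weyl-group-action}.

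The main point requiring care is consistency of conventions. The action of \(W^{(\sigma)}\) on characters must be a left action, \(g\cdot\chi=\chi\circ\Int(g^{-1})\), matching \eqref{eq:chi-of-int}, so that the product \(\iota(f)s(w)\) acts as \(\iota(f)\) after \(s(w)\). The semidirect-product multiplication in \eqref{eq:sigma-weyl-group-lattice}, with \((w\cdot f)(\ell)=f(w^{-1}\ell)\), must also correspond to \(s(w)\iota(f)s(w)^{-1}=\iota(w\cdot f)\). I would check the latter directly: \(\iota(f)=tS\) with \(\chi_{[t]}=\zeta_q^{f}\), and \(ntn^{-1}\) has \(\chi_{[ntn^{-1}]}(\ell)=\chi_{[t]}(w^{-1}\ell)\). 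As a sanity check, \(W^\sigma\)-equivariance of \(\bar\nu\) makes the formula consistent with the group law: applying \((f,w)(f',w')=(f+w\cdot f',ww')\) yields the same result as acting successively.
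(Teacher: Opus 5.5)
Your proposal is correct and follows essentially the same route as the paper. You compute the action of \(s(w)\) through a representative \(n\in N_{(G^\sigma)^\circ}(S)\) that commutes with \(\sigma\), invoke \Cref{lem:translation-action-onestep} for \(\iota(f)\), and verify that \((f,w)\mapsto\iota(f)\,s(w)\) is a homomorphism via \(s(w)\,\iota(f)\,s(w)^{-1}=\iota(w\cdot f)\), checked exactly as the paper does through \(\chi_{[ntn^{-1}]}(\ell)=\chi_{[t]}(w^{-1}\ell)\); the only difference is that you establish the isomorphism before composing the actions rather than after, which is immaterial.
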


\begin{proof}
The action of the subgroup $W_{0,\mathrm{lat}}^{(\sigma)}$ was computed in
\Cref{lem:translation-action-onestep}: if
\(f\in W^{(\sigma)}_{0,\mathrm{lat}}\), then its action on
\(\frk X(S^{(\sigma)})\cong X\oplus\bb{Z}_q\) via the isomorphism \(\iota\) is
\(f \cdot (\lambda,\bar k)=
\bigl(\lambda,\bar k+\langle\bar\lambda,\bar\nu(f)\rangle_q\bigr)\).
We will now identify the action of the chosen representatives of
\(W^\sigma\).
By the construction of the section in
\Cref{prop:W-sigma-to-W}, every
\(w\in W^\sigma\) may be represented by an element
\(n\in N_{(G^\sigma)^\circ}(S)\), and the corresponding element of \(W^{(\sigma)}\) is
\(s(w)=nS\). Then \(n\) centralizes \(\sigma\) and normalizes \(S\). Hence, for
\(h\in S\) and \(u\in\mathbb Z\), the conjugation action on
\(S^{(\sigma)}\) gives
\((h\sigma^u)^n=n^{-1}h\sigma^u n=n^{-1}hn\,\sigma^u\). 
For any \(\chi_{(\lambda,\bar k)}\in\frk X(S^{(\sigma)})\), we then have
\[(n\chi_{(\lambda,\bar k)})(h\sigma^u)
=\chi_{(\lambda,\bar k)}((h\sigma^u)^n)
=\lambda(n^{-1}hn)\zeta_q^{ku}
=(w\lambda)(h)\zeta_q^{ku}.\]
Therefore the action of \(W^\sigma\) on \(\frk X(S^{(\sigma)})\cong X\oplus\bb{Z}_q\) 
via the section \(s\) is given by \(w\cdot(\lambda,\bar k)=(w\lambda,\bar k)\).
Equation~\eqref{eq:full-gen-weyl-group-action} will follow once we show that the bijection 
\((f,w)\mapsto\iota(f)\,s(w)\) is a group homomorphism. To this end, write \(s(w)=nS\), as above,
and \(\iota(f)=[t]:=tS\) for some \(t\in T\), as in the proof of \Cref{lem:translation-action-onestep}. Then \(s(w)\,\iota(f)\,s(w)^{-1}=[ntn^{-1}]=[t^{w^{-1}}]\). For any \(\ell\in L_\sigma\), we have
\[
\chi_{[t^{w^{-1}}]}(\ell)=\ell(t^{w^{-1}})=(w^{-1}\ell)(t)=\chi_{[t]}(w^{-1}\ell)=\zeta_q^{f(w^{-1}\ell)}=\zeta_q^{(w\cdot f)(\ell)},
\]
which means that \([t^{w^{-1}}]=\iota(w\cdot f)\). We have shown that 
\(s(w)\,\iota(f)\,s(w)^{-1}=\iota(w\cdot f)\), as required.
%
% Since \(\bar{\nu}\) is \(W^\sigma\)-equivariant, it follows that \(R_wT_fR_w^{-1}=T_{wf}\). Consequently, the assignment
% \((f,w)\mapsto T_fR_w\) defines an action of the semidirect product
% \(W^{(\sigma)}_{0,\mathrm{lat}}\rtimes W^\sigma\), whose multiplication is
% \((f,w)(f',w')=(f+wf',ww')\). Indeed,
% \(T_fR_wT_{f'}R_{w'}=T_fT_{wf'}R_{ww'}=T_{f+wf'}R_{ww'}\). Applying \(T_fR_w\) to \((\lambda,\bar k)\) gives \(T_fR_w(\lambda,\bar k)
% =T_f(w\lambda,\bar k)=\bigl(w\lambda,\bar k+\langle\overline{w\lambda},\bar\nu(f)\rangle_q\bigr)\),
% which is the asserted formula.
\end{proof}

\begin{remark}\label{rem:connectedTsigma}
The action of the \(\sigma\)-Weyl group on \(\frk{X}(S^{(\sigma)})\) is not necessarily faithful. Since \(C_G(S^{(\sigma)})=C_G(S)\cap G^\sigma=T^\sigma\), the kernel of this action in \(W^{(\sigma)}\) is \(T^\sigma/S\), which corresponds in the lattice model 
\(W^{(\sigma)}_{\mathrm{lat}}=W^{(\sigma)}_{0,\mathrm{lat}}\rtimes W^\sigma \) to the kernel of \(\bar\nu\). In particular, the action is faithful if and only if \(T^\sigma\) is connected. This is the case for \(G\) of simply connected or adjoint type, or more generally if \(\sigma\) acts on the lattice \(\Lambda=\frk{X}(T)\) by permuting the elements of a basis.
\end{remark}

\begin{corollary}
The classification of semisimple elements of finite order in \(G\rtimes\Gamma_0\) 
up to \(G\)-conjugation is independent of the algebraically closed field \(K\), 
except for the condition that the order is not divisible by \(p\) when \(\op{char}K=p\).
\end{corollary}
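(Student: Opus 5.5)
Every semisimple element of finite order \(m\) in \(A=G\rtimes\Gamma_0\) lies in some component \(\sigma G\) with \(\sigma\in\Gamma_0\). Since \(\pi\) is a homomorphism, \(\ord(\sigma)\) divides \(m\); if \(\op{char}K=p\) then \(p\nmid m\), hence \(p\nmid\ord(\sigma)\). Conversely, \(\sigma G\) contains semisimple elements only when \(p\nmid\ord(\sigma)\), as noted in \Cref{subsec:the-general-setup}. The plan is therefore to fix such a \(\sigma\) of order \(q\) and apply \Cref{thm:sem-conj-outer-reorganized}(4),(5) with \(s=\sigma\) and \(T^{(\sigma)}=S\). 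This identifies \(G\)-classes of semisimple elements in \(\sigma G\) with \(W^{(\sigma)}\)-orbits on \(M^{(\sigma)}=\sigma S\), and the order-\(m\) classes with orbits of elements of order \(m\) there. It then remains to show that these orbit data are independent of \(K\).

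To do this, translate elements of \(S^{(\sigma)}\) into lattice terms. Since \(S^{(\sigma)}\) is a quasitorus, \(S^{(\sigma)}\cong\Hom(\frk X(S^{(\sigma)}),K^\times)\), and elements of order dividing \(m\) correspond to \(\Hom(\frk X(S^{(\sigma)}),\bmu_m(K))\). Fixing compatible primitive roots of unity \(\zeta_m\) and \(\zeta_q=\zeta_m^{m/q}\), with \(p\nmid m\), we get \(\Hom(X\oplus\Z_q,\Z_m)\). The coset \(M^{(\sigma)}\) consists of those homomorphisms whose value on the generator \((0,\bar1)\) of \(\Z_q\) corresponds to \(\zeta_q\), that is, \(\bar1\mapsto m/q\). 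This identification is equivariant for \(W^{(\sigma)}\), acting through its action on \(\frk X(S^{(\sigma)})\).

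By \Cref{prop:full-lattice-sigma-weyl-action}, that action is the one of \(W^{(\sigma)}_{\mathrm{lat}}=\Hom(L_\sigma,\Z_q)\rtimes W^\sigma\) on \(X\oplus\Z_q\) given by \eqref{eq:full-gen-weyl-group-action}. Every ingredient there depends only on the root datum, \(\sigma\) and \(q\). Indeed, \(X\cong\Lambda/\sqrt{\Lambda_\sigma}\) by \Cref{cor:kernel-translation-lattice}, and \(L_\sigma\), \(\delta_q\), \(\bar\nu\) and \(W^\sigma\) are all defined purely in terms of the lattice \(\Lambda\). The map \((f,w)\mapsto\iota(f)s(w)\) is an isomorphism onto \(W^{(\sigma)}\), so the orbits of \(W^{(\sigma)}\) and of the lattice model coincide, even though the action may fail to be faithful (\Cref{rem:connectedTsigma}). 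Hence the set of \(G\)-classes of order-\(m\) elements in \(\sigma G\) is in bijection with a set of orbits defined independently of \(K\), for every \(m\) prime to \(p\) and every \(\sigma\in\Gamma_0\).

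The main subtlety I expect is the bookkeeping of choices. The roots of unity \(\zeta_q,\zeta_m\) must be chosen compatibly, and the bijection must be shown to respect element orders so that fixing \(m\) is legitimate. Both points are handled by the canonical isomorphism \(\bmu_m(K)\cong\Z_m\) once \(\zeta_m\) is fixed, which exists exactly when \(p\nmid m\). Finally, the comparison is made across fields through this combinatorial description, not by comparing groups over different fields directly.
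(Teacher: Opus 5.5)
Your proposal is correct and follows essentially the same route as the paper. You reduce via \Cref{thm:sem-conj-outer-reorganized}(4),(5) to \(W^{(\sigma)}\)-orbits on \(M^{(\sigma)}=\sigma S\), and then use \Cref{prop:full-lattice-sigma-weyl-action} to pass to the lattice model acting on \(X\oplus\Z_q\). The only difference is cosmetic: you fix the order \(m\) and parametrize by \(\Hom(X\oplus\Z_q,\Z_m)\) with the value on \((0,\bar 1)\) prescribed, whereas the paper parametrizes all finite-order elements of \(\sigma S\) at once by \(\Hom(X,\mathrm{t}(K^\times))\).
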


\begin{proof}
By \Cref{thm:sem-conj-outer-reorganized}, every such conjugacy class has a representative in \(M^{(\sigma)}=\sigma S\) for some \(\sigma\in\Gamma_0\), with two elements representing the same class if and only if they are in the same \(W^{(\sigma)}\)-orbit.
By \Cref{prop:full-lattice-sigma-weyl-action}, the action of \(W^{(\sigma)}\) on \(\frk X(S^{(\sigma)})\) is represented, after fixing a primitive \(q\)-th root of unity, by the action of the group \(W^{(\sigma)}_{\mathrm{lat}}\) on \(X\oplus\bb{Z}_q\). 
But the elements of finite order in \(M^{(\sigma)}\) correspond to the set \(\Hom(X,\mathrm{t}(K^\times))\). The result follows.
\end{proof}

Our next goal is to include a semisimple analog of elements of order $p$, namely, subgroupschemes $\boldsymbol{\mu}_{p^k}$.

\section{Classification of conjugacy classes of morphisms}
\subsection{Scheme-theoretic reductions}\label{sec:lem-on-group-schemes}

We start this section with auxiliary results needed in the case $\op{char}K=p>0\) for the classification of conjugacy classes of morphisms \(\boldsymbol{\mu}_m \to \mathbf A\) that we will obtain in~\Cref{sec:mu-embeddings}.

We will use the following convention. We write in boldface for affine group schemes over \(K\). For an algebraic affine group scheme \(\mathbf A\), we write \(A:=\mathbf A(K)\) for the
associated affine algebraic group. If $\mathbf{A}$ is smooth, then it is determined by $A$, since \(K\) is assumed to be algebraically closed. 

Write \(m=m'p^k\) with \((m',p)=1\). Then we have $\Z_m\cong\Z_{m'}\times\Z_{p^k}$, 
and Cartier duality yields an isomorphism 
\(\bmu_m \cong \boldsymbol{\mu}_{m'}\times \boldsymbol{\mu}_{p^k}\)
of diagonalizable group schemes.

\begin{lemma}\label{lem:component-image-prime-to-p}
Let \(\eta\cl\boldsymbol{\mu}_m\to\mathbf A\) be a morphism of group schemes.
%to an algebraic affine group scheme. %Write \(m=m'p^k\), with \((m',p)=1\). 
Then the induced morphism \(\boldsymbol{\mu}_m\to\mathbf A/\mathbf A^\circ\) factors through the quotient map \(\boldsymbol{\mu}_m \to \boldsymbol{\mu}_{m'}\). In particular, its image has order prime to \(p\).
\end{lemma}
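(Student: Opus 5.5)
The plan is to use the decomposition \(\boldsymbol{\mu}_m\cong\boldsymbol{\mu}_{m'}\times\boldsymbol{\mu}_{p^k}\) and show that the restriction of the composite \(\bar\eta\colon\boldsymbol{\mu}_m\to\mathbf A\to\mathbf A/\mathbf A^\circ\) to the factor \(\boldsymbol{\mu}_{p^k}\) is trivial. Once that is done, \(\bar\eta\) is trivial on the kernel \(\boldsymbol{\mu}_{p^k}\) of the quotient map \(\boldsymbol{\mu}_m\to\boldsymbol{\mu}_{m'}\). Since \(\boldsymbol{\mu}_{m'}=\boldsymbol{\mu}_m/\boldsymbol{\mu}_{p^k}\) is a quotient in the fppf sense, \(\bar\eta\) then factors through \(\boldsymbol{\mu}_{m'}\).

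The image of a morphism from \(\boldsymbol{\mu}_{m'}\) is a quotient of \(\boldsymbol{\mu}_{m'}\). Such a quotient is diagonalizable, and its character group is a subgroup of \(\Z_{m'}\). Hence its order divides \(m'\), and so it is prime to \(p\).

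The key step is therefore the vanishing on \(\boldsymbol{\mu}_{p^k}\). The group scheme \(\mathbf A/\mathbf A^\circ=\pi_0(\mathbf A)\) is a finite étale group scheme, because \(K\) is a field and \(\mathbf A\) is algebraic. On the other hand, \(\boldsymbol{\mu}_{p^k}\) is infinitesimal: its coordinate algebra \(K[x]/(x^{p^k}-1)=K[x]/((x-1)^{p^k})\) is local with residue field \(K\). Any morphism of schemes from a connected scheme into an étale scheme over \(K\) lands in a single point. Here the point must be the identity, since \(\bar\eta\) preserves the unit.

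Concretely, I will phrase this via Hopf algebras. The algebra \(K[\mathbf A/\mathbf A^\circ]\) is a finite product of copies of \(K\), so it is spanned by idempotents. An algebra map into the local ring \(K[\boldsymbol{\mu}_{p^k}]\) sends each idempotent to \(0\) or \(1\). So the comorphism factors through a single \(K\)-point, namely the counit.

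I do not expect a serious obstacle. The only point needing care is to justify that \(\pi_0(\mathbf A)\) is étale even when \(\mathbf A\) is not smooth. For this I will cite the standard fact that \(\mathbf A\to\pi_0(\mathbf A)\) is the universal map to an étale scheme (see \cite[\S6.7]{Wat}). If one prefers to avoid it, it suffices to note that \(\mathbf A^\circ\) contains the infinitesimal part of \(\mathbf A\), so the quotient is reduced and finite over an algebraically closed field, hence constant.
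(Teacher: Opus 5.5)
Your proof is correct and follows essentially the same route as the paper: split \(\boldsymbol{\mu}_m\cong\boldsymbol{\mu}_{m'}\times\boldsymbol{\mu}_{p^k}\), note that the connected factor \(\boldsymbol{\mu}_{p^k}\) must map into the trivial identity component of the \'{e}tale group \(\mathbf A/\mathbf A^\circ\), and conclude that the map factors through \(\boldsymbol{\mu}_{m'}\). Your idempotent argument on Hopf algebras and your character-group computation of the order only spell out details that the paper compresses into the single line \(\operatorname{\mathbf{Im}}(\overline\eta\circ\iota_p)\subseteq(\mathbf A/\mathbf A^\circ)^\circ=\mathbf 1\).
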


\begin{proof}
Let \(\pi\cl\mathbf A\longrightarrow\mathbf A/\mathbf A^\circ\) be the
quotient morphism, and set \(\overline\eta:=\pi\circ\eta\). Under the
decomposition \(\boldsymbol{\mu}_m\cong\boldsymbol{\mu}_{m'}\times\boldsymbol{\mu}_{p^k}\), 
let \(\iota_p\cl\boldsymbol{\mu}_{p^k}\longrightarrow\boldsymbol{\mu}_m\)
denote the inclusion of the second factor. Since
\(\boldsymbol{\mu}_{p^k}\) is connected,
we have \(\operatorname{\mathbf{Im}}(\overline\eta\circ\iota_p)
\subseteq(\mathbf A/\mathbf A^\circ)^\circ=\mathbf 1\).
Thus \(\overline\eta\) is trivial on \(\boldsymbol{\mu}_{p^k}\), 
and hence factors through the projection onto \(\boldsymbol{\mu}_{m'}\).
Since \(\boldsymbol{\mu}_{m'}\) has order prime to \(p\), so do its quotients.
\end{proof}

% \begin{remark}\label{rem:pnmidq-suffices}
% Thus, in the classification theorem of~\Cref{sec:mu-embeddings}, the outer component of a morphism \(\boldsymbol{\mu}_m\to\mathbf A\) is controlled by the prime-to-\(p\) factor \(\boldsymbol{\mu}_{m'}\). Consequently, the results of~\Cref{subsec:pinned-semidirect-product-case} apply to the resulting diagram automorphism \(\sigma\), and the only genuinely new positive-characteristic contribution comes from the factor \(\boldsymbol{\mu}_{p^k}\).
% \end{remark}

\begin{lemma}\label{lem:Q-in-torus-scheme}
Let \(\eta\cl\mathbf D\to \mathbf A\) be a morphism of group schemes where \(\mathbf{A}\) is smooth and \(\mathbf D\) is connected and diagonalizable. Then there exists a maximal torus \(\mathbf T\) of \(\mathbf A\) such that \(\op{\bf{Im}}\eta\subseteq \mathbf{T}\).
\end{lemma}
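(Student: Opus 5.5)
The idea is to replace \(\op{\bf{Im}}\eta\) by a torus of \(A\) built from a centralizer, and then enlarge that torus to a maximal one. The image \(\mathbf Q:=\op{\bf{Im}}\eta\) is a quotient of the connected diagonalizable group scheme \(\mathbf D\), so it is itself a connected diagonalizable subgroupscheme of \(\mathbf A\) (quotients of diagonalizable group schemes are diagonalizable, and connectedness passes to quotients). Consider the centralizer \(\mathbf C:=C_{\mathbf A}(\mathbf Q)\). By the fact recalled in \Cref{sec:duality} (centralizers of diagonalizable subgroupschemes in smooth group schemes are smooth, \cite[Exp.~XI,~Cor.~2.4]{sga3}), \(\mathbf C\) is smooth, so it is the group scheme associated to the algebraic group \(C=\mathbf C(K)\).

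The key step is to show that \(\mathbf Q\) lies in the center of \(\mathbf C^\circ\), and in fact inside a torus of it. I would work with \(\mathbf Q\) and an \(\mathbf A\)-stable setting as follows. Since \(\mathbf Q\) is connected and commutes with itself, \(\mathbf Q\subseteq \mathbf C\), and connectedness gives \(\mathbf Q\subseteq\mathbf C^\circ\); moreover \(\mathbf Q\) is central in \(\mathbf C\). Now take the smooth connected group \(H:=\mathbf C^\circ\), and a maximal torus \(\mathbf T_H\) of \(H\). The main point is that a central connected diagonalizable (multiplicative type) subgroupscheme of a smooth connected affine group is contained in every maximal torus. To prove this, I would use that \(\mathbf Q\) being of multiplicative type and central, the quotient \(H/\mathbf Q\) is smooth connected, the preimage of a maximal torus of \(H/\mathbf Q\) is a smooth connected group \(\mathbf E\) which is an extension of a torus by \(\mathbf Q\), and such an extension is commutative and of multiplicative type (extensions of multiplicative type by multiplicative type over an algebraically closed field are of multiplicative type, \cite[Exp.~XVII]{sga3}); being smooth and connected, \(\mathbf E\) is a torus containing \(\mathbf Q\). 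Alternatively and more concretely, \(\mathbf Q\subseteq C_{H}(\mathbf T_H)\), and I would argue that \(\mathbf Q\cdot \mathbf T_H\) is a commutative subgroupscheme of multiplicative type which is connected; its reduced part is a torus containing \(\mathbf T_H\), hence equal to \(\mathbf T_H\) by maximality, and then one must show \(\mathbf Q\cdot\mathbf T_H\) is itself smooth. I expect this smoothness issue (a connected but possibly infinitesimal \(\mathbf Q\), e.g. \(\boldsymbol\mu_{p^k}\), must be shown to sit inside a \emph{reduced} torus) to be the main obstacle, and I would handle it via the extension argument above: the group \(\mathbf E\) is smooth because \(H\to H/\mathbf Q\) is faithfully flat with \(H\) smooth and the preimage of a smooth subgroup under a quotient by a flat kernel is smooth when \(H\) is smooth and the map restricted is flat; equivalently, compute that \(\dim \operatorname{Lie}\mathbf E=\dim \mathbf E\) using that \(\operatorname{Lie}\mathbf Q\subseteq\operatorname{Lie}\mathbf T_H\) (since \(\operatorname{Lie}\mathbf Q\) consists of semisimple elements centralizing \(\mathbf T_H\), lying in \(\mathscr L(C_H(T_H))\) whose semisimple part is \(\mathscr L(T_H)\)).

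Finally, the torus \(\mathbf T_H\) (or \(\mathbf E\)) containing \(\mathbf Q\) is a torus of \(\mathbf A\); choose a maximal torus \(\mathbf T\) of \(\mathbf A\) containing it. Then \(\op{\bf{Im}}\eta=\mathbf Q\subseteq\mathbf T\), as required.
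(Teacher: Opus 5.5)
There is a genuine gap in the smoothness step. You correctly identify the crux: a connected but possibly infinitesimal \(\mathbf Q\), such as \(\bmu_{p^k}\), must be shown to lie in a \emph{reduced} torus. Neither of your two proposed resolutions establishes this.

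\textbf{The flatness argument.} The general principle you invoke is false. The preimage of a smooth subgroup under \(H\to H/\mathbf Q\) need not be smooth when \(\mathbf Q\) is not: the preimage of the trivial subgroup is \(\mathbf Q\) itself, e.g.\ \(\bmu_p\subset\mathbb G_m\).

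\textbf{The Lie-algebra count.} Knowing \(\operatorname{Lie}\mathbf Q\subseteq\mathscr L(T_H)\) gives no control over \(\operatorname{Lie}\mathbf E\) or \(\operatorname{Lie}(\mathbf Q\cdot\mathbf T_H)\). From \(1\to\mathbf Q\to\mathbf E\to\bar T\to1\) you only get \(\dim\operatorname{Lie}\mathbf E\le\dim\operatorname{Lie}\mathbf Q+\dim\bar T\). This bound exceeds \(\dim\mathbf E=\dim\mathbf Q+\dim\bar T\) precisely when \(\mathbf Q\) is not smooth, which is the case you care about. Concretely, take the image \(\mathbf Q\) of \(\bmu_{p^2}\) in \(\mathbb G_m^2\) under \(\zeta\mapsto(\zeta,\zeta^p)\). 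Then \(\operatorname{Lie}\mathbf Q=K\times0=\operatorname{Lie}(\mathbb G_m\times1)\). Yet \(\mathbf Q\not\subseteq\mathbb G_m\times1\), and \(\mathbf Q\cdot(\mathbb G_m\times1)=\mathbb G_m\times\bmu_p\) is not smooth. So any use of maximality has to be made on the group \(\mathbf Q\cdot\mathbf T_H\), not on \(\operatorname{Lie}\mathbf Q\).

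\textbf{A side issue.} Your claim that extensions of multiplicative type groups are of multiplicative type is false in general. For example, \(N_{SL_2}(T)\) in characteristic \(\neq2\) is a noncommutative extension of \(\bmu_2\) by \(\mathbb G_m\). It does hold in your connected central situation, so this is harmless here.

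\textbf{The missing ingredient.} You need the structure of the Cartan subgroup: \(C_H(T_H)\) is connected nilpotent, so \(C_H(T_H)\cong T_H\times U_H\) with \(U_H\) unipotent. Since \(\mathbf Q\subseteq H\) is central, it lies in \(C_H(\mathbf T_H)\). Its projection to \(U_H\) is a homomorphism from a diagonalizable group scheme to a unipotent one, hence trivial. Therefore \(\mathbf Q\subseteq\mathbf T_H\) directly. This is exactly the paper's argument, and it makes the passage through \(H/\mathbf Q\), the extension \(\mathbf E\), and all smoothness questions unnecessary.

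Alternatively, you can rescue your Lie-algebra idea by applying it to \(\mathbf M:=\mathbf Q\cdot\mathbf T_H\) itself. Its Lie algebra consists of semisimple elements of \(\mathscr L(C_H(T_H))=\mathscr L(T_H)\oplus\mathscr L(U_H)\), hence lies in \(\mathscr L(T_H)\). This gives \(\dim\operatorname{Lie}\mathbf M\le\dim\mathbf M\), so \(\mathbf M\) is smooth and equals \(\mathbf T_H\). Note that this route also relies on the decomposition \(T_H\times U_H\).
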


\begin{proof}
Set \(\mathbf Q:=\op{\bf{Im}}\eta\subseteq \mathbf A\). Since \(\mathbf D\) is diagonalizable, so is \(\mathbf Q\). Since \(\mathbf D\) is connected, we have \(\mathbf Q\subseteq \mathbf G:=\mathbf{A}^\circ\).

Consider the centralizer \(\mathbf H:=C_{\mathbf G}(\mathbf Q)\). 
Then \(\mathbf H\) is a smooth closed subgroupscheme of \(\mathbf G\). 
Let \(\mathbf T_0\) be a maximal torus of \(\mathbf H^\circ\), 
and set \(\mathbf C:=C_{\mathbf H^\circ}(\mathbf T_0)\). 
Since \(\mathbf H^\circ\) is connected and \(\mathbf T_0\) is a maximal torus, \(\mathbf C\) is connected and nilpotent (for example, apply~\cite[\S21.4]{Hum} to the groups of \(K\)-points), and we have a decomposition \(\mathbf C\cong \mathbf T_0\times \mathbf U\), where \(\mathbf U\) is the unipotent radical of \(\mathbf C\) (see e.g.~\cite[\S10.4]{Wat}). 
By construction, \(\mathbf Q\subseteq C_{\mathbf H}(\mathbf T_0)\). Since \(\mathbf Q\) is connected, it follows that \(\mathbf Q\subseteq C_{\mathbf H}(\mathbf T_0)^\circ=\mathbf C\). Since \(\mathbf U\) does not have nontrivial diagonalizable subgroupschemes (see e.g.~\cite[\S8.3,~Cor.~1]{Wat}), the composite \(\mathbf Q\to \mathbf C\to \mathbf U\) must be trivial, so \(\mathbf Q\subseteq \mathbf T_0\).

Finally, every torus of \(\mathbf A\) is contained in a maximal torus. Choose a maximal torus \(\mathbf T\) with \(\mathbf T_0\subseteq \mathbf T\). 
Then \(\op{\mathbf{Im}}\eta=\mathbf{Q}\subseteq\mathbf{T}\).
\end{proof}

\begin{lemma}\label{lem:conj-of-p-part}
Let \(\mathbf T\) be a maximal torus of $\mathbf{A}$, and let \(\eta,\tilde\eta\cl\mathbf{D}\to\mathbf T\) be morphisms of affine group schemes where 
\(\mathbf D\) is diagonalizable. 
If \(\eta\) and \(\tilde\eta\) are \(A\)-conjugate, then they are \(N_A(T)\)-conjugate.\footnote{In the case \(\mathbf{A}=\mathbf{Aut}\mathcal{A}\), this is a reformulation of \cite[Prop.~4.22]{Alb}.}
\end{lemma}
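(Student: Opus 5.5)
The argument is the classical ``conjugate the tori'' trick, carried out with centralizers of diagonalizable subgroupschemes in place of centralizers of elements. Suppose \(\tilde\eta=\Int(a)\circ\eta\) for some \(a\in A\). Set \(\mathbf Q:=\op{\mathbf{Im}}\eta\subseteq\mathbf T\); then \(\op{\mathbf{Im}}\tilde\eta=a\mathbf Q a^{-1}\subseteq\mathbf T\). Consider the centralizer \(\mathbf H:=C_{\mathbf A}(\op{\mathbf{Im}}\tilde\eta)\). Since \(\mathbf D\), and hence its image, is diagonalizable and \(\mathbf A\) is smooth, \(\mathbf H\) is a smooth closed subgroupscheme, as recalled in the preliminaries. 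We may therefore work with its group of \(K\)-points \(H=\mathbf H(K)\), an affine algebraic group.

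The key observation is that both \(T\) and \(aTa^{-1}\) lie in \(H\), and both are maximal tori of \(H^\circ\).
\begin{enumerate}
\item[(i)] \(\mathbf T\) is commutative and contains \(\op{\mathbf{Im}}\tilde\eta\), so \(\mathbf T\subseteq\mathbf H\). Since \(T\) is connected, \(T\subseteq H^\circ\). Because \(T\) is a maximal torus of \(A\), it is a fortiori maximal among tori of \(H^\circ\).
\item[(ii)] Similarly, \(a\mathbf T a^{-1}\) is commutative and contains \(a\mathbf Q a^{-1}=\op{\mathbf{Im}}\tilde\eta\), so \(a\mathbf Ta^{-1}\subseteq\mathbf H\). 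Hence \(aTa^{-1}\) is a maximal torus of \(H^\circ\) as well.
\end{enumerate}
At the level of schemes, the inclusion of a smooth subgroupscheme such as \(\mathbf T\) into \(\mathbf H\) can be checked on \(K\)-points once \(\mathbf H\) is known to be a closed subgroupscheme. The commutation condition defining the centralizer is functorial, so these containments are genuinely scheme-theoretic.

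By conjugacy of maximal tori in the connected group \(H^\circ\), there is \(h\in H^\circ\) with \(h(aTa^{-1})h^{-1}=T\). Put \(n:=ha\). Then \(n\in N_A(T)\). Moreover \(h\) centralizes \(\op{\mathbf{Im}}\tilde\eta\) scheme-theoretically, because \(h\in\mathbf H(K)\). It follows that
\[\Int(n)\circ\eta=\Int(h)\circ\Int(a)\circ\eta=\Int(h)\circ\tilde\eta=\tilde\eta,\]
so \(\eta\) and \(\tilde\eta\) are \(N_A(T)\)-conjugate.

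The main point needing care is step (i)/(ii) together with the use of \(h\). Specifically, one must check that an element of \(H=C_{\mathbf A}(\op{\mathbf{Im}}\tilde\eta)(K)\) acts trivially on \(\tilde\eta\) as a morphism of schemes, not merely on \(K\)-points. This follows directly from the functorial definition of the centralizer: \(h\) commutes with \(\tilde\eta(x)\) for all \(x\in\mathbf D(\mathcal R)\) and all \(\mathcal R\). The other point to watch is that smoothness of \(\mathbf H\) is needed so that \(H^\circ\) is an honest connected algebraic group to which the conjugacy theorem for maximal tori applies. Both points rely only on the SGA3 smoothness of centralizers of diagonalizable subgroupschemes quoted earlier.
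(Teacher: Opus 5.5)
Your proof is correct and is essentially the paper's argument. The one cosmetic difference is in showing that \(aTa^{-1}\) is a maximal torus of the centralizer. The paper introduces both centralizers \(\mathbf H=C_{\mathbf A}(\mathbf Q)\) and \(\widetilde{\mathbf H}=C_{\mathbf A}(\widetilde{\mathbf Q})\) and transports maximality of \(T\) via the isomorphism \(\Int(a)\colon\mathbf H\to\widetilde{\mathbf H}\). You instead observe directly that \(aTa^{-1}\) is commutative and contains \(\op{\mathbf{Im}}\tilde\eta\).

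The smoothness of \(\mathbf A\) and \(\mathbf H\) that you invoke is not actually needed, and the paper's proof does not use it; this matters because the footnote has possibly non-smooth \(\op{\mathbf{Aut}}\mathcal A\) in mind. The group \(H=\mathbf H(K)\) is an affine algebraic group in any case, so conjugacy of maximal tori in \(H^\circ\) applies regardless.
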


\begin{proof}
Let \(\mathbf Q:=\op{\bf{Im}}\eta\subseteq \mathbf T\) and
\(\widetilde{\mathbf{Q}}:=\op{\bf{Im}}\tilde{\eta}\subseteq \mathbf T\).
By hypothesis, there is \(g\in A\) such that \(\widetilde{\mathbf{Q}}=\Int(g)(\mathbf Q)\). 
Set \(\mathbf H:=C_{\mathbf A}(\mathbf Q)\) and \(\widetilde{\mathbf H}:=C_{\mathbf A}(\widetilde{\mathbf Q})\).  
Because \(\mathbf Q,\widetilde{\mathbf{Q}}\subseteq \mathbf T\) and \(\mathbf T\) is 
commutative, we have \(\mathbf T\subseteq \mathbf H\) and \(\mathbf T\subseteq \widetilde{\mathbf H}\). 
Since \(\mathbf T\) is maximal in \(\mathbf A\), it is also maximal in \(\mathbf H\) and  $\widetilde{\mathbf{H}}$.

Now \(\Int(g)\) is an automorphism of \(\mathbf A\), so it  
induces an isomorphism \(\mathbf H\xrightarrow{\sim}\widetilde{\mathbf H}\).
Since \(T\) is a maximal torus of \(H\), it follows that
\(\Int(g)(T)\) is a maximal torus of \(\widetilde{H}\).
Since any two maximal tori in \(\widetilde{H}\) are \( \widetilde H\)-conjugate, there exists \(h\in \widetilde H\) such that \(h(gTg^{-1})h^{-1}=T\). 
Set \(n:=hg\in A\), then \(n\in N_A(T)\). 
Since \(h\in \widetilde H\), conjugation by \(h\) acts trivially on \(\widetilde{\mathbf{Q}}\). 
Therefore, \(\Int(n)\circ\eta=\Int(h)\circ\Int(g)\circ\eta=\Int(h)\circ\tilde\eta=\tilde\eta\).
Thus, \(\eta\) and \(\tilde\eta\) are \(N_A(T)\)-conjugate.
\end{proof}

\subsection{Classification of conjugacy classes of morphisms}
\label{sec:mu-embeddings}

We first work in the general setup of \Cref{subsec:the-general-setup}. 
Thus \(A\) is a reductive algebraic group with identity component \(G\) and component group \(\Gamma_0=A/G\). 
%\(G:=A^\circ\) is semisimple, and \(C_A(G)\subseteq G\). 
%Recall that \(\Gamma_A=\operatorname{Im}(\bar\rho)\cong A/G\) and let \(\pi\colon A\to\Gamma_A\) be the quotient morphism introduced there.
Let \(\mathbf A\) be the smooth affine group scheme associated to \(A\),
and let \(\boldsymbol{\Gamma}_0\) be the constant group scheme associated to \(\Gamma_0\). 
Let \(\pi\colon\mathbf A\rightarrow\boldsymbol{\Gamma}_0\) be the quotient map.
The objective of this section is to classify, up to \(G\)-conjugacy, the morphisms of affine group schemes \(\eta\cl \boldsymbol{\mu}_m\to \mathbf A\), for any algebraically closed base field \(K\) of characteristic \(p\ge 0\). 

Write \(m=m'p^k\), with \((m',p)=1\) if \(p>0\), and put
\(m'=m\) if \(p=0\). Fix a primitive \(m'\)-th root of unity
\(\zeta_{m'}\in\boldsymbol{\mu}_{m'}(K)=\boldsymbol{\mu}_m(K)\). For each
\(\sigma\in\Gamma_0\), we classify those morphisms $\eta$ for which $\pi_K\circ\eta_K(\zeta_{m'})=\sigma$.

For $\eta\in \Hom(\boldsymbol{\mu}_m, \mathbf{A})$, we write \(\eta_{p'}\) and \(\eta_p\) for the restrictions of \(\eta\) to
\(\boldsymbol{\mu}_{m'}\) and \(\boldsymbol{\mu}_{p^k}\), respectively.
By \Cref{lem:component-image-prime-to-p}, the component morphism
\(\pi\circ\eta\) factors through \(\boldsymbol{\mu}_{m'}\). 
Hence \(\pi\circ\eta\) is determined by the image in \(\Gamma_0\) under \((\pi\circ\eta_{p'})_K\) of the fixed generator \(\zeta_{m'}\). Furthermore, this image has order dividing \(m'\).
% Set \(s:=\eta_K(\zeta_{m'})\in A\), \(\sigma:=\pi_K(s)\in\Gamma_0\), and \(q:=\ord(\sigma)\).
% Since \(s\) has finite order not divisible by \(p\), it is semisimple. 
% %(e.g.\ \cite[\S15]{Hum}). 
% Furthermore, \(q=\ord(\sigma)\mid \ord(s)\), and hence \(p\nmid q\). 

For \(\sigma\in\Gamma_0\), we define the \emph{\(\sigma\)-block} by
\[
\mathcal{H}_\sigma:=
\Bigl\{\eta\in \Hom(\boldsymbol{\mu}_m,\mathbf A)\ \Big|\ \pi_K(\eta_K(\zeta_{m'}))=\sigma\Bigr\}.
\]
Let \(\Gamma_{0,p'}:=\{\sigma\in \Gamma_0 \mid p\nmid \ord(\sigma)\}\subseteq\Gamma_0\) (not necessarily a subgroup).
Every \(\sigma\) arising from a morphism 
\(\boldsymbol{\mu}_m\to \mathbf A\) lies in \(\Gamma_{0,p'}\). 
Moreover, for each \(\sigma\in\Gamma_{0,p'}\) of order \(q\), the corresponding block  
can be nonempty only if \(q\mid m'\). 

It is clear from the definition that
\[
\Hom(\boldsymbol{\mu}_m,\mathbf{A})
\;=\;\bigsqcup_{\sigma\in\Gamma_{0,p'}} \mathcal{H}_\sigma,
\]
and each $\mathcal{H}_\sigma$ is stable under conjugation by \(G\), because \(\pi_K(G)=1\).

% \begin{proof}
% Let $\eta \in \Hom(\boldsymbol{\mu}_m, \mathbf{A})$, then
% $\eta\in \mathcal{H}_\sigma$ for a unique $\sigma\in\Gamma_{A,p'}$, proving the disjoint union.
% If $g\in G$, then $\pi_K(g)=1$, so for any $\eta$ we have
% \[\pi_K\bigl((\Int(g)\circ \eta)_K(\zeta_{m'})\bigr)
% =\pi_K\bigl(g\,\eta_K(\zeta_{m'})\,g^{-1}\bigr)
% =\pi_K\bigl(\eta_K(\zeta_{m'})\bigr).\]
% Thus $\mathcal{H}_\sigma$ is \(G\)-stable.
% \end{proof}

\begin{remark}\label{rem:xi-choice}
The block decomposition depends on the choice of the generator \(\zeta_{m'}\).  
Replacing \(\zeta_{m'}\) by \(\zeta_{m'}^u\) (with \(\gcd(u,m')=1\)) sends \(\mathcal{H}_\sigma\) to \(\mathcal H_{\sigma^u}\).  
This relabelling does not change the classification: the collections \(\{\mathcal{H}_\sigma\mid 
\sigma\in\Gamma_{0}\}\) and \(\{\mathcal H_{\sigma^u}\mid \sigma\in\Gamma_{0}\}\) are in bijective correspondence by precomposing with the power map $[u]:\boldsymbol{\mu}_m\to \boldsymbol{\mu}_m$, given on $R$-points by $x \mapsto x^u$.
\end{remark}

Recall the quasitorus \(S^{(\sigma)}\) and its component \(M^{(\sigma)}\) from \Cref{thm:sem-conj-outer-reorganized} and let \(\mathbf{S}^{(\sigma)}\) be the corresponding smooth closed subgroupscheme of \(\mathbf{A}\).

\begin{theorem}\label{thm:block-reduction-Weyl-orbits}
Fix \(\sigma\in\Gamma_{0,p'}\). Every \(\eta\in\mathcal H_\sigma\) is \(G\)-conjugate to a morphism \(\eta':\boldsymbol{\mu}_m\to\mathbf S^{(\sigma)}\) belonging to
\[
\Hom(\boldsymbol{\mu}_m,\mathbf S^{(\sigma)})_\sigma:=
\left\{\theta\in\Hom(\boldsymbol{\mu}_m,\mathbf S^{(\sigma)})
\mid\theta_K(\zeta_{m'})\in M^{(\sigma)}\right\}.
\]
Moreover, this induces a bijection between \(G\)-orbits on \(\mathcal H_\sigma\) and \(W^{(\sigma)}\)-orbits on \(\Hom(\boldsymbol{\mu}_m,\mathbf S^{(\sigma)})_\sigma\).
\end{theorem}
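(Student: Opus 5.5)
We split \(\eta=\eta_{p'}\cdot\eta_p\) along \(\boldsymbol{\mu}_m\cong\boldsymbol{\mu}_{m'}\times\boldsymbol{\mu}_{p^k}\). The prime-to-\(p\) part is handled by \Cref{thm:sem-conj-outer-reorganized}; the \(p\)-part is then handled inside a centralizer by \Cref{lem:Q-in-torus-scheme} and \Cref{lem:conj-of-p-part}.

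\emph{Existence.} The element \(x:=\eta_K(\zeta_{m'})\) has order dividing \(m'\), so it is semisimple, and it lies in \(A^{(\sigma)}\). By \Cref{thm:sem-conj-outer-reorganized}(4) we may conjugate by \(G\) so that \(x\in M^{(\sigma)}\). Since \(\boldsymbol{\mu}_{m'}\) is smooth, \(\eta_{p'}\) is determined by \(x\), so \(\operatorname{\mathbf{Im}}\eta_{p'}\subseteq\mathbf S^{(\sigma)}\). The image of \(\eta_p\) commutes with that of \(\eta_{p'}\), hence lies in the centralizer \(\mathbf H:=C_{\mathbf A}(x)\). This centralizer is smooth because \(\overline{\langle x\rangle}\) is diagonalizable, and \(\mathbf H^\circ\) is associated to \((A^x)^\circ\). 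As \(\boldsymbol{\mu}_{p^k}\) is connected, \Cref{lem:Q-in-torus-scheme} gives a maximal torus of \(H\) containing \(\operatorname{\mathbf{Im}}\eta_p\). Now \((G^x)^\circ=(A^x)^\circ\), and by the proof of \Cref{thm:sem-conj-outer-reorganized}(5) the torus \(T^{(\sigma)}\) is a maximal torus of \(G^x\). So after conjugating by an element of \((G^x)^\circ\), which fixes \(x\), we get \(\operatorname{\mathbf{Im}}\eta_p\subseteq\mathbf T^{(\sigma)}\subseteq\mathbf S^{(\sigma)}\). Then \(\eta'=\eta_{p'}\eta_p\) lands in \(\mathbf S^{(\sigma)}\), and \(\eta'_K(\zeta_{m'})=x\in M^{(\sigma)}\).

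\emph{Bijection.} The map from \(W^{(\sigma)}\)-orbits to \(G\)-orbits is well defined, since \(N^{(\sigma)}\subseteq G\) and \(N^{(\sigma)}\) preserves \(M^{(\sigma)}\) and hence \(\mathbf S^{(\sigma)}\). It is surjective by the existence step. For injectivity, let \(\theta,\tilde\theta\in\Hom(\boldsymbol{\mu}_m,\mathbf S^{(\sigma)})_\sigma\) with \(\tilde\theta=\Int(g)\circ\theta\) for some \(g\in G\). Set \(x=\theta_K(\zeta_{m'})\) and \(y=\tilde\theta_K(\zeta_{m'})\); both lie in \(M^{(\sigma)}\) and \(y=gxg^{-1}\).
\begin{enumerate}
\item[(a)] By \Cref{thm:sem-conj-outer-reorganized}(5), or rather its proof, there is \(u\in G^y\) with \(n:=ug\in N_G(M^{(\sigma)})\). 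Replacing \(g\) by \(n\) preserves the relation \(\tilde\theta=\Int(g)\circ\theta\) on \(\boldsymbol{\mu}_{m'}\), but it may change the \(p\)-part.
\item[(b)] Replace \(\theta\) by \(\Int(n)\circ\theta\), which is in the same \(W^{(\sigma)}\)-orbit. Now both morphisms have \(\boldsymbol{\mu}_{m'}\)-part determined by \(y\).
\item[(c)] Their \(p\)-parts map into \(\mathbf T^{(\sigma)}\). They are conjugate by \(u^{-1}\in G^y\), so they are conjugate by an element of \(C_{\mathbf A}(y)\).
\item[(d)] Apply \Cref{lem:conj-of-p-part} in the smooth group scheme \(C_{\mathbf A}(y)^\circ\), with maximal torus \(T^{(\sigma)}\), with \(\mathbf D=\boldsymbol{\mu}_{p^k}\). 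This yields an element \(n'\) normalizing \(T^{(\sigma)}\), centralizing \(y\), and conjugating one \(p\)-part to the other.
\end{enumerate}
Such an \(n'\) lies in \(G\), centralizes \(y\in M^{(\sigma)}=yT^{(\sigma)}\), and normalizes \(T^{(\sigma)}\), so it normalizes \(M^{(\sigma)}\) and fixes the \(\boldsymbol{\mu}_{m'}\)-parts. Hence \(\Int(n'n)\circ\theta=\tilde\theta\) with \(n'n\in N^{(\sigma)}\), which proves injectivity.

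The main obstacle is step (d). The lemma is stated for \(A\)-conjugacy with \(\mathbf T\) maximal in \(\mathbf A\); here the ambient group is a centralizer, and the conjugating element \(u^{-1}\) lies only in \(G^y\), not necessarily in its identity component. I would rerun the proof of \Cref{lem:conj-of-p-part} directly with ambient group \(C_A(y)\) and torus \(T^{(\sigma)}\), using that \(T^{(\sigma)}\) is a maximal torus of \(C_A(y)^\circ=(G^y)^\circ\). I would also check that the conjugating element produced there lies in \(G\): it has the form \(hu^{-1}\) with \(h\) in the centralizer of the image inside \((G^y)^\circ\), so it is in \(G^y\).
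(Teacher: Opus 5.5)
Your proof is correct and follows the paper's argument essentially step for step. You conjugate the prime-to-$p$ part into $M^{(\sigma)}$ by \Cref{thm:sem-conj-outer-reorganized}(4) and then move the $p$-part into $\mathbf T^{(\sigma)}$ inside the centralizer via \Cref{lem:Q-in-torus-scheme}; for injectivity you first align the $\boldsymbol{\mu}_{m'}$-parts using (5) and then apply \Cref{lem:conj-of-p-part} inside the centralizer. The obstacle you flag in step (d) is not a real one: the paper applies \Cref{lem:conj-of-p-part} directly to the possibly disconnected smooth group scheme $\mathbf G^s$, since the lemma needs no connectedness, and in any case your $u$ can be chosen in $(G^y)^\circ$ because maximal tori of $G^y$ are conjugate under its identity component.
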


\begin{proof}
Let \(\eta\in\mathcal H_\sigma\), and set
\(s:=(\eta_{p'})_K(\zeta_{m'})\in A\).
By definition of \(\mathcal H_\sigma\), we have
\(\pi_K(s)=\sigma\), and hence \(s\in A^{(\sigma)}\). Moreover, the
order of \(s\) divides \(m'\), so \(s\) is semisimple. By
\Cref{thm:sem-conj-outer-reorganized}(4), after conjugating \(\eta\)
by an element of \(G\), we may assume that
\(s\in M^{(\sigma)}\subseteq S^{(\sigma)}\).
By \Cref{lem:component-image-prime-to-p}, the composition
\(\pi\circ\eta_p\) is trivial, and hence
\(\op{\mathbf{Im}}(\eta_p)\subseteq\mathbf G\).
Since
\(\boldsymbol{\mu}_m
\cong \boldsymbol{\mu}_{m'}\times\boldsymbol{\mu}_{p^k}\)
is commutative, the images of \(\eta_{p'}\) and \(\eta_p\) commute.
Therefore \(\op{\mathbf{Im}}(\eta_p)\) centralizes \(s\), and hence
\(\op{\mathbf{Im}}(\eta_p)\subseteq\mathbf G^s\).
Again by~\Cref{lem:component-image-prime-to-p}, \(\op{\mathbf{Im}}(\eta_{p})\) is contained in \((\mathbf G^s)^\circ\). Since the element \(s\) is semisimple, the subgroupscheme \(\mathbf G^s\) is smooth. By
\Cref{lem:Q-in-torus-scheme}, there exists a maximal torus
\(\mathbf R\subseteq\mathbf G^s\) such that
\(\op{\mathbf{Im}}(\eta_p)\subseteq\mathbf R\).
By \Cref{thm:sem-conj-outer-reorganized}(1), we may assume without loss of generality that 
\(s\) is the semisimple element used to define the objects attached to the component \(A^{(\sigma)}\).
Since \(T^{(\sigma)}\) and \(R\) are maximal tori of \(G^s\), conjugating by a suitable element of \(G^s\), we may further assume that \(R=T^{(\sigma)}\), and hence \(\op{\mathbf{Im}}(\eta_p)\subseteq\mathbf{S}^{(\sigma)}\). 
% Hence \(\mathbf R\) and \(\mathbf T^{(\sigma)}\) are maximal tori of
% \(\mathbf G^s\), and are therefore conjugate by an element of
% \((G^s)^\circ\): \(cRc^{-1}=T^{(\sigma)}\) for some \(c\in(G^s)^\circ\).
% Conjugating \(\eta\) by \(c\) fixes \(s\) and sends
% \(\op{\mathbf{Im}}(\eta_p)\) into \(\mathbf T^{(\sigma)}\). 
Since \(s\in S^{(\sigma)}\) and \(\op{\mathbf{Im}}(\eta_{p'})\) is the smooth group 
scheme corresponding to  \(\langle s\rangle\subseteq S^{(\sigma)}\), it follows that \(\op{\mathbf{Im}}(\eta_{p'}) \subset\mathbf S^{(\sigma)}\), too. 
Therefore, after conjugation, $\eta\cl \boldsymbol{\mu}_m\to \mathbf{S}^{(\sigma)}$.
Since \(\eta_K(\zeta_{m'})=s\in M^{(\sigma)}\), the resulting morphism belongs to
\(\Hom(\boldsymbol{\mu}_m,\mathbf S^{(\sigma)})_\sigma\).

It remains to determine the \(G\)-conjugacy relation on
\(\Hom(\boldsymbol{\mu}_m,\mathbf S^{(\sigma)})_\sigma\).
Let \(\eta,\tilde\eta \in \Hom(\boldsymbol{\mu}_m,\mathbf S^{(\sigma)})_\sigma\),
and suppose that \(\tilde\eta=\Int(g)\circ\eta\)
for some \(g\in G\). Set
\(s:=(\eta_{p'})_K(\zeta_{m'})\) and
\(\tilde s:=(\tilde\eta_{p'})_K(\zeta_{m'})\).
Then \(\tilde s=gsg^{-1}\).
Since both morphisms lie in \(\Hom(\boldsymbol{\mu}_m,\mathbf S^{(\sigma)})_\sigma\), we have \(s,\tilde s\in M^{(\sigma)}\).
By \Cref{thm:sem-conj-outer-reorganized}(5), there exists \(n\in N_G(M^{(\sigma)})\)
such that \(n\tilde s n^{-1}=s\).
Replacing \(\tilde\eta\) by \(\tilde{\eta}^\prime:=\Int(n)\circ\tilde\eta\) and \(g\) by \(g_1:=ng\), we have \(\tilde{\eta}^\prime=\Int(g_1)\circ\eta\), \(g_1\in G^s\). 
Moreover, \(\tilde\eta'_{p'}=\eta_{p'}\),
because the two morphisms from \(\boldsymbol{\mu}_{m'}\) have the same
value \(s\) at the generator \(\zeta_{m'}\).
Both \(\eta_p\) and \(\tilde\eta'_p\) have image in
\(\mathbf T^{(\sigma)}\subseteq\mathbf G^s\), and they are conjugate by
\(g_1\in G^s\). Applying \Cref{lem:conj-of-p-part} to the (smooth) affine
group scheme \(\mathbf G^s\), with maximal torus
\(\mathbf T^{(\sigma)}\), there exists
\(\tilde{n}\in N_{G^s}(T^{(\sigma)})\)
such that \(\tilde\eta'_p=\Int(\tilde{n})\circ\eta_p\).
Since also \(\tilde\eta'_{p'}=\eta_{p'}=\Int(\tilde{n})\circ\eta_{p'}\),
the product decomposition of \(\boldsymbol{\mu}_m\) gives \(\tilde\eta'=\Int(\tilde{n})\circ\eta\).
The element \(\tilde{n}\) centralizes \(s\) and normalizes \(T^{(\sigma)}\).
Consequently, it normalizes
\(sT^{(\sigma)}=M^{(\sigma)}\),
so \(\tilde{n}\in N_G(M^{(\sigma)})\). Therefore
\(\tilde\eta=\Int(n^{-1}\tilde{n})\circ\eta,
\ n^{-1}\tilde{n}\in N_G(M^{(\sigma)})\).
Since \(W^{(\sigma)}=N_G(M^{(\sigma)})/T^{(\sigma)}\), the result follows.
\end{proof}

\subsection{The pinned semidirect-product case}\label{sec:mu-embeddings-semidirect}
We now specialize to the setup of
\Cref{subsec:pinned-semidirect-product-case}. Thus \(G\) is semisimple and \(A=G\rtimes\Gamma_0\), where \(\Gamma_0\le\Gamma_G\) acts through the fixed pinning. 
For \(\sigma\in\Gamma_0\) of order \(q\), put \(S:=(T^\sigma)^\circ\),
\(S^{(\sigma)}=S \times \langle \sigma\rangle\), \(M^{(\sigma)}=\sigma S\), and \(W^{(\sigma)}=N_G(S^{(\sigma)})/S\).
We also retain the identification
\(\frk X(S^{(\sigma)})\cong X\oplus\mathbb Z_q\), where  \(X:=\frk X(S)\) and \(\frk X(\langle\sigma\rangle)\) is identified with \(\Z_q\) via \(\zeta_q:=\zeta_{m'}^{m/q}\) (recall that \(\mathcal{H}_\sigma\) is empty unless \(q\mid m'\)).
We call \(\sigma\) the \emph{outer type} of \(\eta\in\mathcal{H}_\sigma\). 
If \(\sigma=1\), then \(\eta\) is said to be of \emph{inner type}. 

\Cref{thm:block-reduction-Weyl-orbits} reduces the classification problem for morphisms in \(\mathcal{H}_\sigma\) to the action of
\(W^{(\sigma)}\) on morphisms \(\theta\cl\boldsymbol{\mu}_m\to\mathbf{S}^{(\sigma)}\), where  
\(\mathbf{S}^{(\sigma)}\) is the smooth subgroupscheme of \(\mathbf{A}\) corresponding to \(S^{(\sigma)}\). Since in our case \(S^{(\sigma)}\cap A^{(\sigma)}=M^{(\sigma)}\), the condition \(\theta_K(\zeta_{m'})\in M^{(\sigma)}\) is equivalent to \(\theta\) having outer 
type \(\sigma\). It remains to translate this orbit problem into the character
group of \(S^{(\sigma)}\) to make it independent of \(K\). 
The passage from \(W^{(\sigma)}\) to the ``lattice model'' \(W^{(\sigma)}_{\mathrm{lat}}\) acting on \(X\oplus\mathbb Z_q\) is exactly the content of~\Cref{prop:full-lattice-sigma-weyl-action}.

Since \(\boldsymbol{\mu}_m\) and \(\mathbf S^{(\sigma)}\) are diagonalizable
group schemes, Cartier duality gives a canonical bijection
\(\Hom(\boldsymbol{\mu}_m,\mathbf S^{(\sigma)})
\xleftrightarrow{\ \sim\ }
\Hom(\frk X(S^{(\sigma)}),\bb{Z}_m)\). To write this in terms of \(X\oplus\mathbb Z_q\), let \(\varepsilon:=(0,\bar 1)\in X\oplus\bb{Z}_q\)
and define
\[
\Hom(X\oplus\bb{Z}_q,\bb{Z}_m)_\sigma:=\left\{\varrho\in\Hom(X\oplus\bb{Z}_q,\bb{Z}_m)\ \middle|\
\zeta_{m'}^{\,\varrho(\varepsilon)}=\zeta_q
\right\}.
\]
Equivalently, writing \(\varrho(\varepsilon)=\frac{m}{q}\bar\varrho\) with \(\bar{\varrho}\in\Z_q\), 
the condition becomes \(\bar{\varrho}=\bar 1\) in \(\Z_q\).
%\(\varrho(\varepsilon)\) lies in the coset \(\overline{m/q}+m'\bb{Z}_m\subset\bb{Z}_m\).
We will see that this is precisely the subset corresponding to morphisms
\(\boldsymbol{\mu}_m\to\mathbf S^{(\sigma)}\) whose outer type is \(\sigma\).
Note that the element \(\varepsilon\) is fixed by the \(W^{(\sigma)}_{\mathrm{lat}}\)-action on
\(X\oplus\bb{Z}_q\) (see \Cref{prop:full-lattice-sigma-weyl-action}), so this subset is
\(W^{(\sigma)}_{\mathrm{lat}}\)-stable.

\begin{theorem}\label{thm:final-classification-mu-embeddings}
Fix \(\sigma\in\Gamma_{0,p'}\) such that \(q:=\ord(\sigma)\) divides $m$ (and hence $m'$). After the fixed choice of \(\zeta_{m'}\in\boldsymbol{\mu}_{m}(K)\) and hence the primitive $q$-th root of unity \(\zeta_q=\zeta_{m'}^{m/q}\), the set of \(G\)-conjugacy classes in \(\mathcal H_\sigma\) is in bijection with the set of \(W^{(\sigma)}_{\mathrm{lat}}\)-orbits on
\(\Hom(X\oplus\bb{Z}_q,\bb{Z}_m)_\sigma \)
with respect to the action defined in~\Cref{prop:full-lattice-sigma-weyl-action}.
\end{theorem}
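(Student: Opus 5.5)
The plan is to chain three identifications already established in the paper. First, \Cref{thm:block-reduction-Weyl-orbits} puts the \(G\)-orbits on \(\mathcal H_\sigma\) in bijection with the \(W^{(\sigma)}\)-orbits on \(\Hom(\boldsymbol{\mu}_m,\mathbf S^{(\sigma)})_\sigma\). Second, Cartier duality for diagonalizable group schemes gives a bijection
\[
\Hom(\boldsymbol{\mu}_m,\mathbf S^{(\sigma)})\longleftrightarrow\Hom(\frk X(S^{(\sigma)}),\Z_m),\qquad \theta\mapsto\theta^*.
\]
This bijection is natural, so it is equivariant for the action of \(N_G(S^{(\sigma)})=N^{(\sigma)}\) (\Cref{cor:normalizer-M-equals-normalizer-S}). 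That group acts on \(\mathbf S^{(\sigma)}\) by conjugation and on characters by precomposition, and the action factors through \(W^{(\sigma)}\). Third, after fixing \(\zeta_q\), we identify \(\frk X(S^{(\sigma)})\cong X\oplus\Z_q\), and \Cref{prop:full-lattice-sigma-weyl-action} transports the \(W^{(\sigma)}\)-action to the \(W^{(\sigma)}_{\mathrm{lat}}\)-action of Equation \eqref{eq:full-gen-weyl-group-action}. Composing these gives a \(W^{(\sigma)}_{\mathrm{lat}}\)-equivariant bijection \(\Hom(\boldsymbol{\mu}_m,\mathbf S^{(\sigma)})\to\Hom(X\oplus\Z_q,\Z_m)\). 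Two things remain: (a) this bijection carries the subset \(\Hom(\boldsymbol{\mu}_m,\mathbf S^{(\sigma)})_\sigma\) exactly onto \(\Hom(X\oplus\Z_q,\Z_m)_\sigma\); (b) the equivariance is for the correct (left versus right) conventions.

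For (a), we must make the Cartier correspondence explicit on \(K\)-points. Identify \(\frk X(\boldsymbol{\mu}_m)=\Z_m\), with \(\bar j\) the character \(x\mapsto x^j\). Then \(\theta\) corresponds to \(\varrho=\theta^*\) with \(\chi(\theta_K(x))=x^{\varrho(\chi)}\) for all \(x\in\boldsymbol{\mu}_m(K)\) and all characters \(\chi\). Since \(S^{(\sigma)}=S\times\langle\sigma\rangle\), we have \(S^{(\sigma)}\cap A^{(\sigma)}=M^{(\sigma)}=\sigma S\). Therefore \(\theta_K(\zeta_{m'})\in M^{(\sigma)}\) if and only if the \(\langle\sigma\rangle\)-component of \(\theta_K(\zeta_{m'})\) equals \(\sigma\). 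This component is detected by the character \(\varepsilon=(0,\bar1)\), which is trivial on \(S\) and sends \(\sigma\mapsto\zeta_q\). Moreover \(\sigma^u\) is determined by \(\zeta_q^u\), because \(\sigma\) has order exactly \(q\). So the condition is equivalent to \(\varepsilon(\theta_K(\zeta_{m'}))=\zeta_q\), that is, \(\zeta_{m'}^{\varrho(\varepsilon)}=\zeta_q\), which is the defining condition of \(\Hom(X\oplus\Z_q,\Z_m)_\sigma\). Here \(\zeta_{m'}^{j}\) for \(j\in\Z_m\) is well defined because \(\zeta_{m'}\in\boldsymbol{\mu}_m(K)\). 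It is also worth checking the reformulation \(\varrho(\varepsilon)\in\tfrac mq\Z_m\) (forced since \(q\varepsilon=0\)) with \(\bar\varrho=\bar1\). For this one uses \(\zeta_q=\zeta_{m'}^{m/q}\) and the observation that in characteristic \(p\) the exponent only matters modulo \(m'\). Since \(\tfrac mq=\tfrac{m'}{q}p^k\) and \(p^k\) is invertible modulo \(q\), the stated normalization matches, possibly up to the harmless relabelling of \Cref{rem:xi-choice}.

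For (b), note that \(\varepsilon\) is fixed by \(W^{(\sigma)}_{\mathrm{lat}}\), so the subset in (a) is stable and orbits correspond to orbits. The action on \(\Hom(X\oplus\Z_q,\Z_m)\) is \((w\cdot\varrho)=\varrho\circ w^{-1}\). This matches conjugation \(\theta\mapsto\Int(n)\circ\theta\), because \(\chi\circ\Int(n)\) is the induced action of \(n^{-1}\) on characters, and inverting throughout preserves orbits in any case. The main obstacle is step (a): carefully matching the \(p\)-part, since \(\boldsymbol{\mu}_{p^k}\) has trivial \(K\)-points. The point is that \(\varrho(\varepsilon)\) is a \(q\)-torsion element, hence lies in the prime-to-\(p\) part of \(\Z_m\), and is therefore detected by \(K\)-points. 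Once this is verified, the theorem follows by combining \Cref{thm:block-reduction-Weyl-orbits} and \Cref{prop:full-lattice-sigma-weyl-action}.
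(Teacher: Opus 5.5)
Your proposal is correct and follows essentially the same route as the paper's proof. Like the paper, you use the block reduction of \Cref{thm:block-reduction-Weyl-orbits}, then Cartier duality with the character \(\varepsilon=(0,\bar1)\) to cut out the \(\sigma\)-subset, then equivariance via \(\Int(n)\circ\theta\leftrightarrow(\chi\mapsto\varrho(n^{-1}\chi))\), and finally transport to \(W^{(\sigma)}_{\mathrm{lat}}\) by \Cref{prop:full-lattice-sigma-weyl-action}. Your hedge about a possible relabelling is unnecessary: \(\zeta_q=\zeta_{m'}^{m/q}\) by definition, so writing \(\varrho(\varepsilon)=\tfrac mq\bar\varrho\) gives \(\zeta_{m'}^{\varrho(\varepsilon)}=\zeta_q^{\bar\varrho}\), and the normalization matches exactly.
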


\begin{proof}
By \Cref{thm:block-reduction-Weyl-orbits}, the set of \(G\)-conjugacy classes in \(\mathcal H_\sigma\) is identified with the set of \(W^{(\sigma)}\)-orbits on
\(\Hom(\boldsymbol{\mu}_m,\mathbf S^{(\sigma)})_\sigma\). 
Let \(\theta\cl\boldsymbol{\mu}_m\to\mathbf S^{(\sigma)}\) correspond to \(\varrho\in\Hom(\frk{X}(\mathbf{S}^{(\sigma)}),\Z_m)\). This means that, for any \(\chi\in\frk{X}(\mathbf{S}^{(\sigma)})\), any commutative associative unital \(K\)-algebra \(\mathcal R\), and any \(r\in\boldsymbol{\mu}_m(\mathcal R)\), we have \(\chi_{\mathcal R}(\theta_{\mathcal R}(r))=r^{\,\varrho(\chi)}\). 
Taking \(\chi=\varepsilon\), \(\mathcal R=K\), and \(r=\zeta_{m'}\), we obtain 
\(\varepsilon_K(\theta_K(\zeta_{m'}))=\zeta_{m'}^{\,\varrho(\varepsilon)}\). 
Since \(\varepsilon_K\) is trivial on \(S\) and sends \(\sigma\) to \(\zeta_q\), the condition \(\theta_K(\zeta_{m'})\in M^{(\sigma)}\) is equivalent to
\(\zeta_{m'}^{\,\varrho(\varepsilon)}=\zeta_q\). 
Hence Cartier duality restricts to a bijection
\[
\Hom(\boldsymbol{\mu}_m,\mathbf S^{(\sigma)})_\sigma
\xleftrightarrow{\ \sim\ }
\Hom(X\oplus\bb{Z}_q,\bb{Z}_m)_\sigma .\]
This bijection is compatible with the $W^{(\sigma)}$-actions. 
Indeed, the left action of \(N_G(S^{(\sigma)})\) on morphisms \(\theta\in\Hom(\boldsymbol{\mu}_m,\mathbf S^{(\sigma)})\) is by conjugation:
\( n \cdot \theta:=\Int(n)\circ\theta\) for all $n\in N_G(S^{(\sigma)})$. On the character group
\(\frk X(S^{(\sigma)})\cong \frk X(\mathbf S^{(\sigma)})\), the left action corresponding to the conjugation action on $\mathbf S^{(\sigma)}$ is
the usual action given by 
\((n \chi)_{\mathcal R}(t)=\chi_{\mathcal R}(n^{-1}tn)\) for all $\chi \in \frk X(\mathbf{S}^{(\sigma)})$, \(n \in N_G(S^{(\sigma)})\), and \(t \in\mathbf{S}^{(\sigma)}(\mathcal R)\).
Hence, if \(\theta\) corresponds to
\(\varrho\in\Hom(\frk X(S^{(\sigma)}),\bb{Z}_m)\), then
\(\Int(n)\circ\theta\) corresponds to
\(\chi\mapsto \varrho(n^{-1}\chi)\).
Since \(S\) centralizes \(S^{(\sigma)}\), the action factors through
\(W^{(\sigma)}\).

By~\Cref{prop:full-lattice-sigma-weyl-action}, after identifying
\(\frk X(S^{(\sigma)})\) with \(X\oplus\bb{Z}_q\), and \(W^{(\sigma)}\) with \(W^{(\sigma)}_{\mathrm{lat}}\), the
\(W^{(\sigma)}\)-action on $\frk X(S^{(\sigma)})$ corresponds to the
\(W^{(\sigma)}_{\mathrm{lat}}\)-action on $X \oplus\Z_q$. Therefore the
\(W^{(\sigma)}\)-orbits in the block
\(\Hom(\boldsymbol{\mu}_m,\mathbf S^{(\sigma)})_\sigma\) correspond exactly to
the \(W^{(\sigma)}_{\mathrm{lat}}\)-orbits in
\(\Hom(X\oplus\bb{Z}_q,\bb{Z}_m)_\sigma \).
\end{proof}

\begin{corollary}\label{rem:field-independence-mu}
For \(A=G\rtimes\Gamma_0\), the classification of morphisms \(\boldsymbol{\mu}_m\to\mathbf{A}\) up to G-conjugation is independent of the algebraically closed field \(K\), except for the condition that only morphisms of outer type \(\sigma\) whose order is not divisible by \(p\) can occur when \(\operatorname{char}K=p\).
\end{corollary}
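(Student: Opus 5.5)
The plan is to assemble the corollary from \Cref{thm:block-reduction-Weyl-orbits}, \Cref{thm:final-classification-mu-embeddings} and \Cref{prop:full-lattice-sigma-weyl-action}, and then to check that every ingredient of the final combinatorial answer is defined purely from the based root datum of \(G\) together with \(\Gamma_0\). First, by \Cref{lem:component-image-prime-to-p}, every morphism \(\eta\cl\boldsymbol{\mu}_m\to\mathbf A\) lies in a block \(\mathcal H_\sigma\) with \(\sigma\in\Gamma_{0,p'}\) and \(\ord(\sigma)\mid m'\). Conversely, for every such \(\sigma\) the block is nonempty: the morphism \(\boldsymbol{\mu}_m\to\boldsymbol{\mu}_{m'}\to\mathbf A\) sending \(\zeta_{m'}\mapsto\sigma\) lies in \(\mathcal H_\sigma\), because \(\sigma\in A\) is semisimple of order dividing \(m'\). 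Since the blocks are \(G\)-stable and disjoint, the \(G\)-conjugacy classification of \(\Hom(\boldsymbol{\mu}_m,\mathbf A)\) is the disjoint union over these \(\sigma\) of the classifications within the individual blocks.

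Next, for a fixed \(\sigma\), \Cref{thm:final-classification-mu-embeddings} identifies the \(G\)-classes in \(\mathcal H_\sigma\) with the \(W^{(\sigma)}_{\mathrm{lat}}\)-orbits on \(\Hom(X\oplus\Z_q,\Z_m)_\sigma\). I would then verify that each datum here is independent of \(K\).

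1. \(X=\frk X(S)\cong\Lambda/\sqrt{\Lambda_\sigma}\) by \Cref{cor:kernel-translation-lattice}, so it depends only on \(\Lambda\) and the action of \(\sigma\).
2. \(W^\sigma\) depends only on \(W\subset\Aut\Lambda\) and \(\sigma\), and the same holds for its action on \(X\).
3. \(L_\sigma\), \(\delta_q\), and hence \(\bar\nu\cl\Hom(L_\sigma,\Z_q)\to Y/qY\), are all defined lattice-theoretically in \Cref{lem:deltaq-onestep} and \Cref{def:nubar-lat-onestep}.
4. The action \eqref{eq:full-gen-weyl-group-action} is given by an explicit formula in these data.

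The subset \(\Hom(X\oplus\Z_q,\Z_m)_\sigma\) is cut out by the condition \(\bar\varrho=\bar1\in\Z_q\). After the choice of \(\zeta_{m'}\), and with \(\zeta_q\) defined from it, this condition makes no reference to \(K\).

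The step I expect to need the most care is the bookkeeping of the root-of-unity choices, so that the bijections for two different fields \(K,K'\) of possibly different characteristic are compared along the same combinatorial set. Over each field we fix \(\zeta_{m'}\), which determines \(\zeta_q\). By \Cref{rem:xi-choice}, changing \(\zeta_{m'}\) by a unit \(u\) only relabels the blocks via \(\sigma\mapsto\sigma^u\) together with precomposition by \([u]\). Hence the resulting combinatorial description, namely the set of pairs \((\sigma,\text{orbit})\), is canonical up to this relabelling and is the same for all \(K\). We should also note that \(m'\) depends on \(p\). However, the combinatorial set for a given \(\sigma\) only involves \(m\) and \(q\), and the only place where \(p\) enters is the requirement \(q\mid m'\), equivalently \(q\mid m\) with \(p\nmid q\). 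This is exactly the exception stated in the corollary. Comparing with characteristic \(0\), where \(m'=m\) and every \(\sigma\) with \(q\mid m\) occurs, completes the argument.
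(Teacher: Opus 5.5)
Your proposal is correct and follows the same route as the paper. Like the paper, you apply \Cref{thm:final-classification-mu-embeddings} block by block, observe that \(X\), \(W^{(\sigma)}_{\mathrm{lat}}\), its action and the subset \(\Hom(X\oplus\Z_q,\Z_m)_\sigma\) depend only on the root datum and \(\sigma\), and note that \(\mathcal H_\sigma=\varnothing\) when \(p\mid\ord(\sigma)\). Your extra checks (nonemptiness of the blocks with \(q\mid m'\), and the root-of-unity bookkeeping via \Cref{rem:xi-choice}) are sound, and they spell out points the paper leaves implicit.
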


\begin{proof}
After fixing a generator of \(\boldsymbol{\mu}_m(K)\), \Cref{thm:final-classification-mu-embeddings} identifies the classification of \(G\)-orbits in each block \(\mathcal H_\sigma\) with that of $W^{(\sigma)}_{\mathrm{lat}}$-orbits in the set \(\Hom(X\oplus\bb{Z}_q,\bb{Z}_m)_\sigma\), which depends only on the root datum of \(G\) and the diagram automorphism \(\sigma\in\Gamma_{0,p'}\). If \(p \mid \operatorname{ord}(\sigma)\) then 
\(\mathcal H_\sigma=\varnothing\).    
\end{proof}

\begin{remark}\label{rem:action_real_torus}
It is convenient to embed all groups \(\Z_n\) into the circle \(\mathbb{R}/\Z\) via \(k+n\Z\mapsto\frac{k}{n}+\Z\). Writing \(\varrho\in\Hom(X\oplus\Z_q,\Z_m)\) as \((\rho,\bar{\varrho})\), 
where \(\rho\) is the restriction of \(\varrho\) to \(X\) and \(\bar{\varrho}\in\Z_q\) is defined by 
\(\varrho(\varepsilon)=\frac{m}{q}\bar{\varrho}\), we then embed \(\Hom(X\oplus\Z_q,\Z_m)_\sigma\) into  
the set \(\Hom(X,\mathbb{R}/\Z)\times\{\frac{1}{q}+\Z\}\). This latter is canonically identified with the 
real torus \(E/Y\) where \(Y:=\frk{Y}(S)\) and \(E:=Y\otimes_\Z\mathbb{R}\). Under these identifications, 
the action of \(W^{(\sigma)}_{\mathrm{lat}}\) given by Equation~\eqref{eq:full-gen-weyl-group-action} 
corresponds to the action \((f,w)\cdot x=wx-\frac{1}{q}\bar{\nu}(f)\) for \(x\in E/Y\).
\end{remark}

The next result relates \(G\)-conjugacy to \(A\)-conjugacy.

\begin{proposition}\label{prop:A-vs-G-fixed-outer-block}
Let \(\sigma\in\Gamma_{0,p'}\). If \(a\in A\) has image
\(\gamma\in\Gamma_0\), then
\(\Int(a)(\mathcal H_\sigma)=\mathcal H_{\gamma\sigma\gamma^{-1}}\).
Consequently, the stabilizer of \(\mathcal H_\sigma\) in \(\Gamma_0\) is
\(C_{\Gamma_0}(\sigma)\). The induced action of \(C_{\Gamma_0}(\sigma)\) on
the set of \(G\)-conjugacy classes in \(\mathcal H_\sigma\) factors through
\(C_{\Gamma_0}(\sigma)/\langle\sigma\rangle\).
In particular, if \(C_{\Gamma_0}(\sigma)=\langle\sigma\rangle\), then two
morphisms in \(\mathcal H_\sigma\) are \(A\)-conjugate if and only if they are
\(G\)-conjugate.
\end{proposition}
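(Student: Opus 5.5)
The plan is to compute directly how conjugation by \(a\) moves the element \(\eta_K(\zeta_{m'})\) between components, and then show that the residual action of \(C_{\Gamma_0}(\sigma)\) on \(G\)-classes is killed by \(\langle\sigma\rangle\) because \(\sigma\) itself can be realized inside the image of \(\eta\).

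First, let \(\eta\in\mathcal H_\sigma\) and \(a\in A\) with \(\pi_K(a)=\gamma\). Since \(\pi\) is a homomorphism of group schemes, we have \(\pi_K\bigl((\Int(a)\circ\eta)_K(\zeta_{m'})\bigr)=\pi_K\bigl(a\,\eta_K(\zeta_{m'})\,a^{-1}\bigr)=\gamma\sigma\gamma^{-1}\). Hence \(\Int(a)(\mathcal H_\sigma)\subseteq\mathcal H_{\gamma\sigma\gamma^{-1}}\). Applying the same computation to \(a^{-1}\) gives equality. Since the blocks are disjoint (and \(\mathcal H_\sigma\) is nonempty when \(q\mid m'\); otherwise the statement is vacuous), the stabilizer of \(\mathcal H_\sigma\) in \(\Gamma_0\) is exactly \(\{\gamma\mid\gamma\sigma\gamma^{-1}=\sigma\}=C_{\Gamma_0}(\sigma)\).

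Next, the induced action on \(G\)-classes is well defined as an action of \(C_{\Gamma_0}(\sigma)\). Indeed, \(G\) is normal in \(A\), so \(\Int(a)\) maps \(G\)-orbits to \(G\)-orbits. Moreover, if \(a,a'\) have the same image \(\gamma\), then \(a'=ga\) with \(g\in G\), so the two maps agree on \(G\)-orbits. Thus \(A_\sigma:=\pi_K^{-1}(C_{\Gamma_0}(\sigma))\) acts on the set of \(G\)-classes through \(C_{\Gamma_0}(\sigma)\).

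The key step is to show that \(\sigma\) acts trivially. Take \(\eta\in\mathcal H_\sigma\) and put \(s:=\eta_K(\zeta_{m'})\in A^{(\sigma)}\). Since \(\boldsymbol\mu_m\) is commutative, \(\Int(s)\circ\eta=\eta\). This is the one point to check carefully at the scheme level: \(s\) lies in the image of \(\eta_K\), and for any \(\mathcal R\) the element \(s\in A(K)\subseteq\mathbf A(\mathcal R)\) commutes with \(\eta_{\mathcal R}(\boldsymbol\mu_m(\mathcal R))\), because both are images of elements of the commutative group \(\boldsymbol\mu_m(\mathcal R)\) under the homomorphism \(\eta_{\mathcal R}\). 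Since \(\pi_K(s)=\sigma\), the element \(\sigma\in C_{\Gamma_0}(\sigma)\) fixes the \(G\)-class of \(\eta\).

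One more observation is needed, since a different representative of \(\sigma\) could a priori act differently on other classes. This is not an issue: by the previous paragraph, any representative \(a\) of \(\sigma\) differs from \(s\) by an element of \(G\), so \(\Int(a)\circ\eta\) is \(G\)-conjugate to \(\Int(s)\circ\eta=\eta\). As \(\eta\) was arbitrary, \(\sigma\), and hence \(\langle\sigma\rangle\), acts trivially, and the action factors through \(C_{\Gamma_0}(\sigma)/\langle\sigma\rangle\).

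Finally, if \(C_{\Gamma_0}(\sigma)=\langle\sigma\rangle\), suppose \(\eta,\tilde\eta\in\mathcal H_\sigma\) satisfy \(\tilde\eta=\Int(a)\circ\eta\). By the first step, \(\pi_K(a)\in C_{\Gamma_0}(\sigma)=\langle\sigma\rangle\), which acts trivially on \(G\)-classes. Therefore \(\tilde\eta\) is \(G\)-conjugate to \(\eta\).

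The only mild obstacle is the scheme-theoretic commutation in the key step, which is handled functorially as above.
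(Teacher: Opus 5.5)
Your proof is correct, but the key step (that \(\sigma\) acts trivially on \(G\)-classes) takes a different and more elementary route than the paper.

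The paper first applies \Cref{thm:block-reduction-Weyl-orbits} to write \(\eta=\Int(g)\circ\theta\), with \(\theta\) landing in \(\mathbf S^{(\sigma)}\). Since \(S^{(\sigma)}=S\times\langle\sigma\rangle\) is commutative and contains the pinned representative \(\sigma\in\Gamma_0\subset A\), it gets \(\Int(\sigma)\circ\theta=\theta\). Hence \(\Int(\sigma)\circ\eta=\Int(\sigma g\sigma^{-1})\circ\theta\) is \(G\)-conjugate to \(\eta\).

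You instead note that \(s=\eta_K(\zeta_{m'})\) already represents the component \(A^{(\sigma)}\) and lies in the commutative image of \(\eta\). So \(\Int(s)\circ\eta=\eta\) holds directly, and you pass to any other representative by an element of \(G\).

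Your argument needs neither the conjugacy results of \Cref{sec:outer-components-general} nor the semidirect-product structure. It therefore works verbatim in the general setup of \Cref{subsec:the-general-setup}. The paper's version instead shows that the fixed element \(\sigma\in A\) centralizes a normal form, which matches how \(\mathbf S^{(\sigma)}\) is used afterwards.

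You also make explicit two things the paper leaves implicit: that the action on \(G\)-classes depends only on the image in \(\Gamma_0\), and the final ``in particular''.

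One small point, which the paper shares: if \(q\nmid m'\), then \(\mathcal H_\sigma\) is empty. Its set-stabilizer is then all of \(\Gamma_0\), so the stabilizer claim is degenerate in that case rather than ``vacuous''.
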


\begin{proof}
Let \(a\in A\), and write \(\gamma:=\pi_K(a)\). If \(\eta\in\mathcal H_\sigma\), then
\[\pi_K\bigl((\Int(a)\circ\eta)_K(\zeta_{m'})\bigr)=\gamma\,\pi_K(\eta_K(\zeta_{m'}))\,\gamma^{-1}=\gamma\sigma\gamma^{-1}.\]
Thus \(\Int(a)(\mathcal H_\sigma)=\mathcal H_{\gamma\sigma\gamma^{-1}}\).
It follows that an element of \(A\) preserving the block \(\mathcal H_\sigma\)
must have image in \(C_{\Gamma_0}(\sigma)\).

Let \(\eta\in\mathcal H_\sigma\). By~\Cref{thm:block-reduction-Weyl-orbits}, there exist \(g\in G\) and
\(\theta\in\Hom(\boldsymbol{\mu}_m,\mathbf S^{(\sigma)})_\sigma\) such that
\(\eta=\Int(g)\circ\theta\). Since
\(S^{(\sigma)}=\langle S,\sigma\rangle\), the element \(\sigma\)
centralizes \(S^{(\sigma)}\). Hence \(\Int(\sigma)\circ\theta=\theta\). Therefore
\(\Int(\sigma)\circ\eta=
\Int(\sigma g\sigma^{-1})\circ\theta\),
which is \(G\)-conjugate to \(\theta\), and hence \(G\)-conjugate to \(\eta\).
% Thus the residual action of \(C_{\Gamma_0}(\sigma)\) on the set of \(G\)-orbits
% factors through \(C_{\Gamma_0}(\sigma)/\langle\sigma\rangle\).
% 
% If \(C_{\Gamma_0}(\sigma)=\langle\sigma\rangle\), then every \(A\)-conjugacy between
% two \(G\)-orbits in \(\mathcal H_\sigma\) is induced by an element
% of \(\langle\sigma\rangle\), which acts trivially by the preceding paragraph.
% Hence \(A\)-conjugacy and \(G\)-conjugacy coincide inside \(\mathcal H_\sigma\).
\end{proof}

\begin{remark}\label{rem:irreducible-centralizer-condition}
Assume that the root system \(\Phi\) of \(G\) is irreducible. Then, for every
nontrivial \(\sigma\in\Gamma_0\), one has \(C_{\Gamma_0}(\sigma)=\langle\sigma\rangle\).
% Indeed, the group \(\Gamma_0\) embeds into
% \(\Gamma_G\subseteq\Gamma:=\Aut\Pi\), and
% \(C_{\Gamma_0}(\sigma)=\Gamma_0\cap C_{\Gamma}(\sigma)\).
% For an irreducible Dynkin diagram, the group \(\Gamma\) is either
% trivial, cyclic of order \(2\), or isomorphic to \(S_3\) in type
% \(D_4\). In the latter case, the centralizer in \(S_3\) of a
% transposition is the subgroup generated by that transposition, while
% the centralizer of a \(3\)-cycle is the subgroup generated by that
% \(3\)-cycle. Hence
% \(C_{\Gamma}(\sigma)=\langle\sigma\rangle\)
% for every nontrivial \(\sigma\in\Gamma\). Since
% \(\langle\sigma\rangle\subseteq\Gamma_0\), it follows that 
% \(C_{\Gamma_0}(\sigma)=\langle\sigma\rangle\).
Consequently, for every nontrivial outer type in the irreducible case, the \(G\)-conjugacy classification within the block \(\mathcal H_\sigma\) already coincides with the
\(A\)-conjugacy classification within that block.
\end{remark}

\section{Applications to classical simple Lie algebras}\label{sec:classical-simple-lie-algebras}

We now apply the general classification results of~\Cref{sec:mu-embeddings-semidirect} to the automorphism groups of classical simple Lie algebras in arbitrary characteristic, which will yield a classification of gradings by cyclic groups on these algebras. The additional input needed for this is the smoothness of the automorphism group schemes.

\subsection{Classical simple Lie algebras in positive characteristic}\label{subsec:classical-small-char-applications}
In characteristic \(0\), the passage from an irreducible root system to the corresponding simple Lie algebra is uniform. 
In positive characteristic, however, several exceptional phenomena may occur. 
The Chevalley Lie algebra attached to a classical root system may fail to be simple, and even when the associated simple Lie algebra \(\g\) exists, the automorphism group scheme \(\op{\mathbf{Aut}}\g\) can have type different from \(\g\) or may a priori fail to be smooth (see \cite{Rob}, also \cite[\S0.13]{Hu} and \cite[Chap. 3, \S3]{Alb}). Thus, before applying the general classification results, one must first identify the correct simple Lie algebra and verify the smoothness of its automorphism group scheme.

The ideal structure of the tangent Lie algebras of simple algebraic groups depends on the isogeny type, but is well understood (see \cite{hogeweij,hummod,hiss}). 
For our purposes, it is sufficient to consider the Chevalley Lie algebras \(\tilde{\g}\) and their central quotients $\g=\tilde{\g}/Z(\tilde{\g})$ introduced in~\Cref{sec:classical}. 
The former arise from the simply connected groups: \(\tilde{\g}=\Lie(G_{\mathrm{sc}})\), 
and the central quotients are simple with the following exceptions: in $\op{char}K=3$, type $G_2$ and, in $\op{char}K=2$, \(A_1\) and all non-simply-laced types are excluded (type $G_2$ in characteristic $2$ gives a simple Lie algebra, but it is isomorphic to the one coming from \(A_3\)). Outside of these cases, the simple quotient \(\g\) can be alternatively described as the derived algebra of \( \Lie(G_{\mathrm{ad}})\). We include a proof for completeness.

\begin{lemma}\label{lem:der-alg-G-ad}
\([\Lie(G_{\mathrm{ad}}),\Lie(G_{\mathrm{ad}})]
=\ad\tilde\g\).   
\end{lemma}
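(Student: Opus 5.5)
We prove that \([\Lie(G_{\mathrm{ad}}),\Lie(G_{\mathrm{ad}})]=\ad\tilde\g\), viewing \(\ad\tilde\g\subseteq\Lie(G_{\mathrm{ad}})\) through the differential of the central isogeny \(G_{\mathrm{sc}}\to G_{\mathrm{ad}}\), whose image in \(\End\tilde\g\) is \(\ad\tilde\g\cong\tilde\g/Z(\tilde\g)=\g\). We work with the Chevalley basis. Both Lie algebras share the root spaces: \(\Lie(G_{\mathrm{ad}})=\h_{\mathrm{ad}}\oplus\bigoplus_{\alpha\in\Phi}Kx_\alpha\), where \(\h_{\mathrm{ad}}=\Lie(T_{\mathrm{ad}})=\Hom(\Z\Phi,\Z)\otimes K\) has the basis of fundamental coweights \(\varpi_i^\vee\). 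The image of \(\tilde\g\) is \(\h'\oplus\bigoplus_\alpha Kx_\alpha\), where \(\h'\) is the image of \(\h_\Z=\Z\Phi^\vee\) in \(\h_{\mathrm{ad}}\), i.e.\ the span of the images of the coroots \(h_i=\sum_j\langle\alpha_j,\alpha_i^\vee\rangle\varpi_j^\vee\). The plan is to prove the two inclusions separately.

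\emph{Inclusion \(\supseteq\).} Since \(Kx_\alpha\subseteq\h_{\mathrm{ad}}\oplus\cdots\) and \(\h_{\mathrm{ad}}\) acts on \(x_\alpha\) by \([h,x_\alpha]=\alpha(h)x_\alpha\) (computed in \(K\)), we get \(x_\alpha\in[\h_{\mathrm{ad}},x_\alpha]\) as soon as some \(h\in\h_{\mathrm{ad}}\) has \(\alpha(h)\neq0\). Writing \(\alpha=\sum c_j\alpha_j\), we have \(\alpha(\varpi_j^\vee)=c_j\). In each irreducible root system every root has some coefficient equal to \(\pm1\): this holds for simple roots, and one checks the remaining roots against the tables (the highest root always has a simple root with coefficient \(1\), and for any root one can exhibit such a coefficient). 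Consequently \(x_\alpha\in[\Lie(G_{\mathrm{ad}}),\Lie(G_{\mathrm{ad}})]\) in every characteristic. Moreover \([x_\alpha,x_{-\alpha}]=h_\alpha\), the image of the coroot, so \(\h'\) lies in the derived algebra as well. Therefore \(\ad\tilde\g\) is contained in the derived algebra.

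\emph{Inclusion \(\subseteq\).} Brackets of root vectors give \([x_\alpha,x_\beta]\in Kx_{\alpha+\beta}\) or \(K h_\alpha\), and \([\h_{\mathrm{ad}},x_\alpha]\subseteq Kx_\alpha\), while \([\h_{\mathrm{ad}},\h_{\mathrm{ad}}]=0\). Hence every bracket lies in \(\h'\oplus\bigoplus Kx_\alpha=\ad\tilde\g\).

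The only step requiring care is the claim that every root has a coefficient \(\pm1\) in the basis \(\Pi\). If this claim fails in small characteristic, we fall back on a different argument: it suffices that \(\alpha\) is nonzero as a functional on \(\h_{\mathrm{ad}}\), i.e.\ that \(\alpha\notin p\Z\Phi\). This holds because a root is never \(p\) times a lattice vector: roots are primitive in \(\Z\Phi\), being \(W\)-conjugate to simple roots, which are basis elements. This fallback in fact bypasses the case check entirely, so it is the argument we would use.
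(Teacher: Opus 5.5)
Your proof is correct, and for the inclusion $\supseteq$ it takes a genuinely different route from the paper.

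\textbf{The inclusion $\subseteq$.} This is essentially the same as the paper's. The paper notes that each $\ad x_\alpha$ spans the one-dimensional root space $\Lie(G)_\alpha$. It also notes that $\ad\tilde\g$ is an ideal of $\Lie(G)$, being an ideal of $\Der\tilde\g$. Hence $\Lie(G)/\ad\tilde\g$ is a quotient of the abelian algebra $\Lie(T)$. You reach the same conclusion directly from the Chevalley commutation relations.

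\textbf{The inclusion $\supseteq$.} Here the paper only uses that $\ad\tilde\g\cong\g$ is simple, hence perfect, so $\ad\tilde\g=[\ad\tilde\g,\ad\tilde\g]\subseteq[\Lie(G),\Lie(G)]$. You instead write each basis vector of the image of $\tilde\g$ as a commutator inside $\Lie(G_{\mathrm{ad}})$:
\begin{itemize}
\item $x_\alpha$ comes from bracketing with the coweight part $\Hom(\Z\Phi,\Z)\otimes K$. This works because a root is primitive in $\Z\Phi$, so some coefficient is nonzero mod $p$.
\item $h_\alpha$ comes from $[x_\alpha,x_{-\alpha}]$.
\end{itemize}

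\textbf{What each route buys.} Your route requires the explicit Chevalley-basis model of $\Lie(G_{\mathrm{ad}})$, with coweights rather than coroots in the Cartan part. In exchange it never uses simplicity of $\g$. It therefore shows, in every characteristic, that the derived algebra of $\Lie(G_{\mathrm{ad}})$ is the image of $\Lie(G_{\mathrm{sc}})$ under the differential of the central isogeny. That includes excluded cases such as $A_1$ in characteristic $2$, where $\tilde\g$ itself is not perfect, so the paper's argument could not work there. The paper's argument is shorter and more conceptual, but it is tied to the cases where $\g$ is simple, which is all the paper needs.

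\textbf{One correction.} Your preliminary claim that every root has a coefficient $\pm1$ in the basis $\Pi$, with the highest root always having a coefficient $1$, is false. The highest roots $3\alpha_1+2\alpha_2$ of $G_2$, $2\alpha_1+3\alpha_2+4\alpha_3+2\alpha_4$ of $F_4$, and the highest root of $E_8$ have no coefficient equal to $1$. You were right to discard it. The primitivity argument ($W$-conjugacy to a simple root, with $W$ acting by automorphisms of $\Z\Phi$) is the one that actually proves the claim.
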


\begin{proof}
Let \(\widetilde G=G_{\mathrm{sc}}\) and \(G=\Ad \widetilde G \subseteq \Aut \tilde \g\), which is of adjoint type. 
The differential of \(\Ad\cl\widetilde{G}\to G\) is the map
\(\ad\cl\tilde\g=\Lie(\widetilde{G})\to\Lie(G)\subseteq\Der\tilde\g\). Note that the image \(\ad\tilde\g\) is an ideal of \(\Lie(G)\), since it is an ideal of \(\Der\tilde\g\). 
%Indeed, for \(D\in\Lie(G)\subseteq\Der\tilde\g\) and \(x\in\tilde\g\), one has \([D,\ad x]=\ad(D(x))\in\ad\tilde\g\).

Let \(\widetilde T\) be a maximal torus of \(\widetilde G\) and let \(T=\Ad\widetilde{T}\). 
Consider the root space decomposition of \(\tilde \g\) and \(\Lie(G)\) with respect to these tori. 
For any \(x \in \tilde{\g}_\alpha\) and \(t \in \widetilde T\), we have \((\Ad t) (\ad x) (\Ad t)^{-1}=\ad((\Ad t)x)=\alpha(t)\ad x\), so \(\ad x \in \Lie(G)_\alpha\).  Therefore, every root space of
\(\Lie(G)\) is contained in
\(\ad\tilde\g\). Hence
\(\Lie(G)/\ad\tilde\g\) is a quotient of the abelian Lie algebra
\(\Lie(T)\), and is therefore abelian. This gives \([\Lie(G),\Lie(G)]\subseteq \ad\tilde\g\).
On the other hand, since
\(\ad\tilde\g\cong\g\) is simple, we have
\(\ad\tilde\g=[\ad\tilde\g,\ad\tilde\g]
\subseteq [\Lie(G),\Lie(G)]\).
\end{proof}

\begin{remark}\label{rem:gtilde_perfect}
\(\tilde{\g}\) is perfect. Indeed, \([\g,\g]=\g\) implies \(\tilde\g=[\tilde\g,\tilde\g]+Z(\tilde\g)\), but the relation \([x_\alpha,x_{-\alpha}]=h_\alpha\) implies that \(\tilde\h\subset[\tilde\g,\tilde\g]\).
\end{remark}

It remains to verify smoothness of the automorphism group scheme of these classical simple Lie algebras. We use the standard differential criterion: an algebraic affine group scheme \(\mathbf G\) over \(K\) is smooth if and only if \(\dim\Lie(\mathbf G)=\dim\mathbf G\). For \(\mathbf G=\op{\mathbf{Aut}}\g\), this amounts to checking that \(\dim\Der\g=\dim\Aut\g\).  

\begin{lemma}\label{lem:faithful-restriction-to-ideal}
Let \(\frk l\) be a finite-dimensional Lie algebra over \(K\), and let
\(\frk m\subseteq\frk l\) be an ideal such that
\(C_{\frk l}(\frk m)=0\). Assume that \(\frk m\) is \((\op{\mathbf{Aut}}\frk l)\)-invariant, i.e., for every commutative associative unital \(K\)-algebra \(\mathcal R\), the ideal
\(\frk m_{\mathcal R}:=\frk m\otimes_K\mathcal R\) is preserved by every automorphism of
\(\frk l_R:=\frk l\otimes_K\mathcal R\). Then restriction defines a closed embedding of affine
group schemes \(\operatorname{res}_{\frk m}\colon
\op{\mathbf{Aut}}\frk l\rightarrow\op{\mathbf{Aut}}\frk m\).
% This morphism is a monomorphism, and its differential
% \(d\operatorname{res}_{\frk i}\colon
% \Der\frk l\rightarrow\Der\frk i\),
% \(D\mapsto D|_{\frk i}\), is injective.
\end{lemma}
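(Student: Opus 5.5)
The plan is to show that the restriction morphism is well defined, is a monomorphism of affine group schemes, and then to conclude that it is a closed embedding. For every \(\mathcal R\), the hypothesis that \(\frk m\) is \((\op{\mathbf{Aut}}\frk l)\)-invariant means \(\psi(\frk m_{\mathcal R})=\frk m_{\mathcal R}\) for all \(\psi\in\Aut_{\mathcal R}(\frk l_{\mathcal R})\). Hence \(\psi|_{\frk m_{\mathcal R}}\) is an \(\mathcal R\)-linear Lie algebra automorphism of \(\frk m_{\mathcal R}\); its inverse is \(\psi^{-1}|_{\frk m_{\mathcal R}}\). This assignment is a group homomorphism, natural in \(\mathcal R\). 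So \(\operatorname{res}_{\frk m}\) is a morphism of group functors, and it is therefore a morphism of affine group schemes.

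Next I will prove injectivity on \(\mathcal R\)-points for every \(\mathcal R\). Suppose \(\psi|_{\frk m_{\mathcal R}}=\mathrm{id}\). For \(x\in\frk l_{\mathcal R}\) and \(y\in\frk m_{\mathcal R}\), we have \(\psi([x,y])=[\psi(x),\psi(y)]=[\psi(x),y]\). Since \([x,y]\in\frk m_{\mathcal R}\), the left side equals \([x,y]\), so \([\psi(x)-x,y]=0\). Thus \(\psi(x)-x\in C_{\frk l_{\mathcal R}}(\frk m_{\mathcal R})\).

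The key point is that the centralizer commutes with base change: \(C_{\frk l_{\mathcal R}}(\frk m_{\mathcal R})=C_{\frk l}(\frk m)\otimes_K\mathcal R\). To see this, note that \(C_{\frk l}(\frk m)\) is the kernel of the \(K\)-linear map \(\frk l\to\Hom_K(\frk m,\frk l)\), \(x\mapsto\ad x|_{\frk m}\). Both spaces are finite-dimensional, so the same map tensored with \(\mathcal R\) describes the \(\mathcal R\)-centralizer. Kernels commute with the flat base change \(K\to\mathcal R\), since every \(K\)-module is flat. Hence \(C_{\frk l_{\mathcal R}}(\frk m_{\mathcal R})=0\) and \(\psi=\mathrm{id}\). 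So \(\operatorname{res}_{\frk m}\) has trivial kernel as a group scheme.

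Finally, a morphism of affine algebraic group schemes over a field with trivial kernel is a closed embedding; this is a standard result, e.g.~\cite[\S15.3]{Wat}, which states that a homomorphism is a closed embedding iff its kernel is trivial. Both \(\op{\mathbf{Aut}}\frk l\) and \(\op{\mathbf{Aut}}\frk m\) are algebraic, since the algebras are finite-dimensional, so this result applies. The only step needing care is the base-change statement for the centralizer. Everything else is formal, provided the invariance hypothesis is taken as given for every \(\mathcal R\).
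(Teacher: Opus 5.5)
Your proposal is correct and takes essentially the same route as the paper. Injectivity of \(x\mapsto \ad x|_{\frk m}\) survives base change, which gives \(C_{\frk l_{\mathcal R}}(\frk m_{\mathcal R})=0\); the identity \([\theta(x)-x,y]=\theta([x,y])-[x,y]=0\) then shows that \(\operatorname{res}_{\frk m}\) has trivial kernel on all \(\mathcal R\)-points, and you make explicit the standard fact (trivial kernel implies closed embedding for algebraic affine group schemes over a field) that the paper's proof uses only implicitly.
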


\begin{proof}
By hypothesis, the adjoint representation \(\frk l\to\End_K(\frk m)\) is injective. 
Since \(K\) is a field, it remains injective after tensoring with any \(\mathcal R\), i.e.,
\(C_{\frk l_{\mathcal R}}(\frk m_{\mathcal R})=0\) for every \(\mathcal R\).

Now let \(\theta\in(\op{\mathbf{Aut}}\frk l)(\mathcal R)\) lie in the kernel of 
\((\operatorname{res}_{\frk i})_{\mathcal R}\), i.e.,  \(\theta\) acts as the identity on 
\(\frk m_{\mathcal R}\). 
Then, for \(x\in\frk l_{\mathcal R}\) and \(y\in\frk m_{\mathcal R}\), we have 
\([\theta(x)-x,y]=[\theta(x),\theta(y)]-[x,y]=\theta([x,y])-[x,y]=0\),
because \([x,y]\in\frk m_{\mathcal R}\).
Therefore \(\theta(x)-x\in C_{\frk l_{\mathcal R}}(\frk m_{\mathcal R})=0\), and hence
\(\theta=\operatorname{id}_{\frk l_{\mathcal R}}\). 
%
% The hypothesis also implies that every derivation of \(\frk l\) preserves
% \(\frk i\). Indeed, for \(D\in\Der\frk l\), the automorphism \(1+\varepsilon D\) of 
% \(\frk l\otimes_KK[\varepsilon]/(\varepsilon^2)\) preserves
% \(\frk i\otimes_KK[\varepsilon]/(\varepsilon^2)\), and hence 
% \(D(\frk i)\subseteq\frk i\).
% Now let \(D\in\Der\frk l\) satisfy \(D|_{\frk i}=0\). For \(x\in\frk l\) and
% \(y\in\frk i\), the derivation identity gives
% \([D(x),y]=D([x,y])-[x,D(y)]=0\), since \([x,y]\in\frk i\). Hence \(D(x)\in C_{\frk l}(\frk i)=0\) for every \(x\in\frk l\), so \(D=0\). Therefore \(d\operatorname{res}_{\frk i}\) is injective.
\end{proof}

\begin{lemma}\label{lem:faithful-pi}
Let \(\frk l\) be a finite-dimensional Lie algebra over \(K\). Then \(\frk z=Z(\frk l)\) is \((\op{\mathbf{Aut}}\frk l)\)-invariant. If \(\frk l\) is perfect, then the passage to the quotient defines a closed embedding of affine group schemes \(\pi\colon
\op{\mathbf{Aut}}\frk l\rightarrow\op{\mathbf{Aut}}(\frk l/\frk z)\). 
\end{lemma}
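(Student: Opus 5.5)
Two things have to be shown: that \(\frk z=Z(\frk l)\) is invariant under \(\op{\mathbf{Aut}}\frk l\), and that for perfect \(\frk l\) the induced map to \(\op{\mathbf{Aut}}(\frk l/\frk z)\) is a closed embedding. For the first, take any commutative associative unital \(K\)-algebra \(\mathcal R\) and any \(\theta\in\Aut_{\mathcal R}(\frk l_{\mathcal R})\). The key observation is that \(Z(\frk l_{\mathcal R})=\frk z\otimes_K\mathcal R\). To see this, choose a \(K\)-basis \(\{r_j\}\) of \(\mathcal R\) and write \(x=\sum_j x_j\otimes r_j\). Then \([x,y\otimes1]=\sum_j[x_j,y]\otimes r_j\), and this vanishes for all \(y\in\frk l\) exactly when every \(x_j\in\frk z\). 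An automorphism preserves the center of \(\frk l_{\mathcal R}\), so \(\theta(\frk z_{\mathcal R})=\frk z_{\mathcal R}\). This gives invariance, and \(\theta\) induces an \(\mathcal R\)-automorphism of \(\frk l_{\mathcal R}/\frk z_{\mathcal R}\cong(\frk l/\frk z)_{\mathcal R}\). The construction is functorial in \(\mathcal R\), so we obtain a morphism of group schemes \(\pi\).

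For the embedding, the plan is to prove that \(\pi\) has trivial kernel as a functor. We then conclude with the standard fact that a monomorphism of algebraic affine group schemes, i.e. a morphism with trivial scheme-theoretic kernel, is a closed embedding (e.g.~\cite[\S15.3]{Wat}); this is the same conclusion drawn in \Cref{lem:faithful-restriction-to-ideal}. So let \(\theta\in\op{\mathbf{Aut}}\frk l(\mathcal R)\) act trivially on the quotient. Then \(\theta(x)-x\in\frk z_{\mathcal R}\) for every \(x\in\frk l_{\mathcal R}\). Since \(\frk z_{\mathcal R}\) is central, for all \(x,y\) we get
\[\theta([x,y])=[\theta(x),\theta(y)]=[x+c_x,\,y+c_y]=[x,y],\]
where \(c_x=\theta(x)-x\) and \(c_y=\theta(y)-y\). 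Hence \(\theta\) is the identity on \([\frk l_{\mathcal R},\frk l_{\mathcal R}]=[\frk l,\frk l]\otimes\mathcal R\). Because \(\frk l\) is perfect, this space is all of \(\frk l_{\mathcal R}\), so \(\theta=\mathrm{id}\).

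The only step needing care is the identification \(Z(\frk l_{\mathcal R})=\frk z_{\mathcal R}\), since \(\mathcal R\) is arbitrary; the basis argument above settles it, because \(\mathcal R\) is free over the field \(K\). The other potential subtlety is quoting the right criterion that a trivial kernel yields a closed embedding. If one wants to avoid that, an alternative is to show directly that the comorphism is surjective on coordinate algebras. I expect citing Waterhouse to suffice, in the same way the paper treats \Cref{lem:faithful-restriction-to-ideal}.
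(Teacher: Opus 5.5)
Your proposal is correct and follows essentially the same route as the paper: invariance of \(\frk z\) under base change (your basis computation of \(Z(\frk l_{\mathcal R})\) is an explicit version of the paper's remark that \(\frk z\) is the kernel of the adjoint representation), then triviality of the kernel of \(\pi\) from \(\theta([x,y])=[x,y]\) together with perfectness of \(\frk l\). The paper leaves implicit the passage from trivial kernel to closed embedding, and your citation of the standard criterion for algebraic affine group schemes over a field supplies that step.
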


\begin{proof}
The invariance follows from the fact that \(\frk z\) is the kernel of the adjoint representation \(\frk l\to\End_K(\frk l)\). If \(\theta\in(\op{\mathbf{Aut}}\frk l)(\mathcal R)\) lies in the kernel of \(\pi_{\mathcal R}\), i.e.,  \(\theta(x)-x\in\frk z_{\mathcal R}\) for all \(x\in\frk l_{\mathcal R}\), then, for any \(x,y\in\frk l_{\mathcal R}\), we have 
\(\theta([x,y])=[\theta(x),\theta(y)]=[x,y]\). Since \(\frk l_{\mathcal R}\) is the \(\mathcal R\)-span of commutators, we conclude that \(\theta=\operatorname{id}_{\frk l_{\mathcal R}}\).
\end{proof}

\subsubsection{Characteristic \(p\neq2\).}
In this case, smoothness of the automorphism group schemes \(\op{\mathbf{Aut}}\g\) is discussed in~\cite[Chap.~3]{Alb}, where it is proved that, under two additional hypotheses, every derivation is inner (see \cite[Thm.~3.2]{Alb}). One of these hypotheses fails for the Lie algebra of type \(E_6\) in characteristic \(3\) as well as for $A_n$ if $p$ divides $n+1$, and the argument does not apply in characteristic \(2\). We therefore retain only one of their two hypotheses, namely, that distinct roots induce distinct \(\h\)-weights. This gives a uniform description of \(\Der\g\) sufficient for the smoothness argument. The exceptional case \(A_2\) in characteristic \(3\) does not satisfy this hypothesis and is therefore excluded from this uniform argument. However, it is shown in~\cite{Alb} that the automorphism group scheme is smooth in this case as well (of type $G_2$).

Let $\widetilde G$ be a simply connected semisimple group with $\mathscr{L}(\widetilde G)=\tilde\g$, and let \(\widetilde{T}\subset \widetilde{G}\) be the maximal torus that induces the root space decomposition in the definition of $\tilde\g$ (see~\Cref{sec:classical}), so $\tilde\h=\Lie(\widetilde T)$. We will consider the corresponding (smooth) affine group schemes $\widetilde{\mathbf G}$ and $\widetilde{\mathbf T}$, and  
denote by $\mathbf{G}=\Ad \widetilde{\mathbf{G}} \subseteq \op{\mathbf{Aut}}\tilde\g$ and \(\mathbf T:=\Ad\widetilde{\mathbf{T}}\subset \op{\mathbf{Aut}}\tilde\g\) their image under the adjoint representation. 
Let \(\pi\cl\op{\mathbf{Aut}}\tilde\g\rightarrow\op{\mathbf{Aut}}\g\) be the closed embedding as in \Cref{lem:faithful-pi} (see also \Cref{rem:gtilde_perfect}). 
% and put \(\overline{\mathbf{T}}:=\pi(\mathbf T)\subset \op{\mathbf{Aut}}\g\). 
% Since \(\widetilde{\mathbf T}\) is a torus and both \(\Ad\) and \(\pi\) are morphisms of affine group schemes,  \(\mathbf T\) and \(\overline{\mathbf T}\) are also tori. 
We denote \(\tilde{\mathfrak t}:=\mathscr L( \mathbf T)\subset\Der\tilde{\g}\) and  
\(\ft:=d\pi(\tilde{\mathfrak t})\subseteq 
% \mathscr{L}(\overline{\mathbf T})\subseteq 
% \mathscr{L}(\op{\mathbf{Aut}}\g)= 
\Der\g\).

The action of \(\ft\) on \(\g\) is easy to describe explicitly. The adjoint representation of $\widetilde T$ gives the root-space decomposition
$\tilde{\g}=\tilde{\mathfrak{h}} \oplus \bigoplus_{\alpha \in \Phi} \tilde{\g}_\alpha$,
where $\tilde{\mathfrak{h}}= \mathscr{L}(\widetilde{T})$ and $\tilde{\g}_\alpha=K x_\alpha$ is the weight space of weight $\alpha$. 
The character group of \(T\) is naturally identified with the root lattice \(\Lambda_r\). 
Now, for any abelian group \(M\), we have \(\mathscr{L}(M^D)\cong \Hom_{\mathbb Z}(M,K)\). In particular, we get \(\tilde{\ft}\cong\Hom_{\mathbb Z}(\Lambda_r,K)\). 
Under this identification, an element \(u\in \tilde{\ft}\) acts trivially on \(\tilde\h\) and acts on each root space \(\tilde\g_\alpha=Kx_\alpha\) by the scalar \(u(\alpha)\). 
Being an action by derivations, it preserves the center and therefore descends to the quotient \(\g=\tilde\g/Z(\tilde\g)\): an element $u$ acts trivially on \(\h:=\tilde{\mathfrak{h}} / Z(\tilde{\g})\) and acts on each root space \(\g_{\alpha}\) by the scalar \(u(\alpha)\).

We can now follow the approach of \cite{Alb} to describe the full derivation algebra of \(\g\) under the hypothesis that distinct roots give distinct \(\h\)-weights.

\begin{theorem}\label{thm:derivations-of-g}
Let \(\g=\tilde\g/Z(\tilde\g)\) be a classical simple Lie algebra over a field \(K\). Assume that
\(\bar\alpha\neq\bar\beta\) for all distinct roots
\(\alpha,\beta\in\Phi\) (hence $\op{char}K \neq 2$). Let \(\ft\) be the image of \(\Hom_{\mathbb Z}(\Lambda_r,K)\) in \(\Der\g\) given by its natural action. Then
\[\Der\g=\ad\g+\ft.\]
\end{theorem}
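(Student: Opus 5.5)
The proof follows the classical argument for derivations of Chevalley algebras, as in \cite{Alb}: first normalize a derivation on \(\h\), then use the root-space grading. Let \(D\in\Der\g\). Decompose \(D\) with respect to the \(\Lambda_r\)-grading \(\g=\h\oplus\bigoplus_{\alpha\in\Phi}\g_\alpha\), which is a grading by the root lattice. Since \(\g\) is finite-dimensional, \(D=\sum_{\gamma}D_\gamma\) with finitely many \(\gamma\in\Lambda_r\), and each homogeneous component \(D_\gamma\) (mapping \(\g_\beta\to\g_{\beta+\gamma}\)) is itself a derivation. It therefore suffices to treat homogeneous derivations \(D=D_\gamma\) and show that each lies in \(\ad\g+\ft\).

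\emph{Case \(\gamma\neq0\).} Here \(D(\h)\subseteq\g_\gamma\). If \(\gamma\notin\Phi\), then \(D(\h)=0\). If \(\gamma\in\Phi\), write \(D|_\h\) as \(h\mapsto\phi(h)x_\gamma\) for some linear form \(\phi\) on \(\h\). Apply \(D\) to \([h,h']=0\). This gives \(\phi(h')\bar\gamma(h)=\phi(h)\bar\gamma(h')\), so \(\phi=c\bar\gamma\) provided \(\bar\gamma\neq0\). The hypothesis guarantees \(\bar\gamma\ne 0\), since \(\bar\gamma=\overline{-\gamma}\) with \(\gamma\neq-\gamma\) would otherwise be excluded, using \(\op{char}K\neq2\). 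Then \(D+c\,\ad x_\gamma\) vanishes on \(\h\) (up to sign conventions). So in all cases we may assume \(D(\h)=0\). Now, for \(x\in\g_\beta\), apply \(D\) to \([h,x]=\bar\beta(h)x\) to get \([h,Dx]=\bar\beta(h)Dx\). Hence \(Dx\) lies in the \(\h\)-weight space of weight \(\bar\beta\). By the hypothesis that distinct roots give distinct \(\h\)-weights, and since \(\bar\beta\neq0\) (so \(Dx\notin\h\)), this weight space equals \(\g_\beta\). Thus \(D(\g_\beta)\subseteq\g_\beta\cap\g_{\beta+\gamma}=0\) for \(\gamma\ne0\). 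Since \(\g\) is spanned by \(\h\) and the root spaces, \(D=0\).

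\emph{Case \(\gamma=0\).} Now \(D(\h)\subseteq\h\) and \(D(x_\alpha)=c_\alpha x_\alpha\). Apply \(D\) to \([h,x_\alpha]=\bar\alpha(h)x_\alpha\). This yields \(\bar\alpha(Dh)=0\) for all \(\alpha\in\Phi\). Since \(\g\) is the quotient by \(Z(\tilde\g)=\{h:\bar\alpha(h)=0\ \forall\alpha\}\), the roots separate points of \(\h\), so \(D(\h)=0\). It remains to show that \(\alpha\mapsto c_\alpha\) is the restriction to \(\Phi\) of some \(u\in\Hom_\Z(\Lambda_r,K)\); then \(D\) is the action of \(u\), which lies in \(\ft\). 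Define \(u\) on the basis \(\Pi\) by \(u(\alpha_i)=c_{\alpha_i}\), and subtract its action, so that we may assume \(c_{\alpha_i}=0\) for all simple roots. Applying \(D\) to \([x_\alpha,x_\beta]=N_{\alpha\beta}x_{\alpha+\beta}\) gives \(c_{\alpha+\beta}=c_\alpha+c_\beta\) whenever \(N_{\alpha\beta}\neq0\) in \(K\). Applying \(D\) to \([x_\alpha,x_{-\alpha}]=h_\alpha\) gives \((c_\alpha+c_{-\alpha})h_\alpha=D(h_\alpha)=0\). Hence \(c_{-\alpha}=-c_\alpha\) provided \(h_\alpha\neq0\) in \(\g\), which holds because \(\bar\alpha(h_\alpha)=2\neq0\).

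The main obstacle is the induction on height showing \(c_\alpha=0\) for all positive roots. Every positive non-simple root can be written as \(\alpha+\alpha_i\) with \(\alpha\) positive and \(\alpha_i\) simple, but the structure constant \(N_{\alpha,\alpha_i}=\pm(r+1)\) might vanish in \(K\). Since \(r+1\le3\), this can only happen for \(p=3\) in type \(G_2\), or \(p=2\); both are excluded here (the \(p=2\) cases by the hypothesis, and \(G_2\) with \(p=3\) by the simplicity assumption). I would check this carefully. Where a string makes \(N\) vanish, I would instead choose a different decomposition \(\alpha+\alpha_j\), or use the relation with \(x_{-\alpha_i}\) together with \(c_{-\alpha}=-c_\alpha\). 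Once \(c_\alpha=0\) for \(\alpha>0\), the relation \(c_{-\alpha}=-c_\alpha\) finishes the argument, giving \(D\in\ad\g+\ft\).
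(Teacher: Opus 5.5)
Your proof is correct and follows essentially the same route as the paper: you reduce to homogeneous derivations via the \(\Lambda_r\)-grading, remove the \(\h\)-part of a degree-\(\gamma\) derivation with a multiple of \(\ad x_\gamma\) and then apply the distinct-weights hypothesis, and in degree \(0\) you show \(D(\h)=0\) and subtract the element of \(\ft\) that matches \(D\) on the simple roots. The only variation is at the end: the paper simply invokes that the \(x_{\pm\alpha_i}\) generate \(\g\), whereas your height induction (using \(c_{-\alpha}=-c_\alpha\) and \(N_{\alpha,\alpha_i}=\pm(r+1)\neq0\) in \(K\), since \(p=2\) and \(G_2\) with \(p=3\) are excluded) in effect proves that generation statement, so your closing hedge about choosing alternative decompositions is unnecessary.
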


\begin{proof}
% By the hypothesis \(\bar\alpha\neq\bar\beta\) for \(\alpha\neq\beta\), the
% \(\h\)-eigenspace decomposition in Equation~\eqref{4} coincides with the
% root-space decomposition \(\g=\h\oplus\bigoplus_{\alpha\in\Phi}\g_\alpha\), where
% \(\g_\alpha=Kx_\alpha\). Thus \(\g\) is graded by the root lattice
% \(\Lambda_r\), with \(\h\) in degree \(0\). 
The root-space decomposition of \(\g\) is a grading by \(\Lambda_r\), which 
induces a \(\Lambda_r\)-grading on \(\End\g\). Since \(\Der\g\) is a graded subspace
of \(\End\g\), it suffices to consider a homogeneous derivation
\(D\in(\Der\g)_\gamma\), with \(\gamma\in\Lambda_r\).

Assume first that \(\gamma\neq0\). If \(\gamma\notin\Phi\), then
\(D(\h)\subseteq\g_\gamma=0\), so \(D(\h)=0\). For \(h\in\h\) and
\(x_\alpha\in\g_\alpha\), the derivation identity gives
\([h,D(x_\alpha)]=D([h,x_\alpha])=\bar\alpha(h)D(x_\alpha)\). Hence
\(D(x_\alpha)\) has \(\h\)-weight \(\bar\alpha\). On the other hand,
homogeneity gives \(D(x_\alpha)\in\g_{\alpha+\gamma}\), so \(\bar\alpha=\overline{\alpha+\gamma}\) unless \(D(x_\alpha)=0\). If \(\alpha+\gamma \notin \Phi \cup\{0\}\), then \(\g_{\alpha+\gamma}=0\), hence \(D(x_\alpha)=0\). If
\(\alpha+\gamma\in\Phi\), then since \(\alpha+\gamma\neq\alpha\), by hypothesis, \(\overline{\alpha+\gamma}\neq\bar\alpha\), so we must have \(D(x_\alpha)=0\). If \(\alpha+\gamma=0\), then
\(D(x_\alpha)\in\h\), whose \(\h\)-weight is \(0\), whereas
\(\bar\alpha\neq0\). In all cases \(D(x_\alpha)=0\). Thus \(D=0\).

Now suppose \(0 \neq \gamma=\alpha\in\Phi\). Then \(D(\h)\subseteq\g_\alpha\), so
there is a linear form \(f\cl\h\to K\) such that
\(D(h)=f(h)x_\alpha\). Applying \(D\) to the relation \([h',h'']=0\) gives
\(f(h')\bar\alpha(h'')=f(h'')\bar\alpha(h')\). Since \(\bar\alpha\neq0\), we
have \(f=\mu\bar\alpha\) for some \(\mu\in K\). Hence \(\widetilde D:=D+\mu\,\ad(x_\alpha)\) annihilates \(\h\). The preceding
argument, applied to \(\widetilde D\), shows that \(\widetilde D(x_\beta)=0\)
for every \(\beta\in\Phi\). Hence \(\widetilde D=0\), and \(D=-\mu\,\ad(x_\alpha)\in\ad\g\). Therefore every homogeneous derivation of
nonzero degree is inner.

It remains to consider the case \(\gamma=0\). Then \(D\) preserves \(\h\) and each
root space \(\g_\alpha\). We claim that \(D(\h)=0\). Indeed, for \(h\in\h\) and
\(x_\alpha\in \g_\alpha\),
\[[D(h),x_\alpha]=D([h,x_\alpha])-[h,D(x_\alpha)]=\bar\alpha(h)D(x_\alpha)-\bar\alpha(h)D(x_\alpha)=0.\]
Also \(D(h)\in\h\), and \(\h\) is abelian. Hence \(D(h)\) commutes with
\(\h\oplus\bigoplus_{\alpha\in\Phi}\g_\alpha=\g\). Since \(\g\) is simple, its center
is zero, so \(D(h)=0\). Let \(\Pi=\{\alpha_1,\dots,\alpha_\ell\}\) be the fixed base of \(\Phi\). Since each
\(\g_{\alpha_i}\) is one-dimensional, there exist scalars \(\mu_i,\nu_i\in K\) such
that \(D(x_{\alpha_i})=\mu_i x_{\alpha_i},\ D(x_{-\alpha_i})=\nu_i x_{-\alpha_i}
\ (1\le i\le \ell)\).
Applying \(D\) to the relation \([x_{\alpha_i},x_{-\alpha_i}]=h_i\) and using \(D(h_i)=0\), we obtain \(0=\left(\mu_i+\nu_i\right) h_i\). Since \(h_i \neq 0\), it follows that
\(D(x_{-\alpha_i})=-\mu_i x_{-\alpha_i}\).
Since \(\{\alpha_1,\dots,\alpha_\ell\}\) is a \(\mathbb Z\)-basis of \(\Lambda_r\), there exists
\(u\in \Hom_{\mathbb Z}(\Lambda_r,K)\) such that
\(u(\alpha_i)=\mu_i\ (1\le i\le \ell)\).
Let \(\tilde \delta\in \tilde{\ft}\) correspond to \(u\) under the identification \(\tilde{\ft}\cong\Hom_{\mathbb Z}(\Lambda_r,K)\), and let \(\delta:=d\pi(\tilde\delta)\in \ft\).
By construction, \(\delta\) acts trivially on \(\h\), and for every root \(\alpha\) it acts
on \(\g_\alpha\) as multiplication by \(u(\alpha)\). In particular,
\(\delta(x_{\alpha_i})=\mu_i x_{\alpha_i}\) and
\(\delta(x_{-\alpha_i})=-\mu_i x_{-\alpha_i}\).
Therefore, \(D-\delta\) annihilates the elements \(x_{\pm\alpha_i}\) of \(\g\). Since
these elements generate \(\g\) as a Lie algebra, it follows that \(D=\delta\in \ft\).
%
% We have shown that every homogeneous derivation of degree \(0\) lies in \(\ft\), and
% every homogeneous derivation of nonzero degree lies in \(\ad\g\). Hence
% \(\Der\g\subseteq\ad\g+\ft\).
% The reverse inclusion is clear, since
% \(\ad\g\subseteq \Der\g\) and
% \(\ft\subseteq\mathscr L(\overline{\mathbf T})\subseteq \Der\g\).
\end{proof}

\begin{remark}\label{rem:comparison-eld-kochetov}
\Cref{thm:derivations-of-g} recovers~\cite[Thm.~3.2]{Alb} under the additional hypothesis \(Z(\tilde\g)=0\) (in other words, \(p\) does not divide the determiant of the Cartan matrix). Then the map \(\tilde\h\to\ft\), \(h\mapsto\ad h\) is injective. Since
\(\dim\tilde\h=\operatorname{rk}\Phi=\dim\ft\), it is an isomorphism. Hence
\(\ft=\ad\tilde\h\subseteq\ad\g\), and therefore \(\Der\g=\ad\g+\ft=\ad\g\).
\end{remark}

We now explain how~\Cref{thm:derivations-of-g} implies the smoothness of
\(\op{\mathbf{Aut}}\g\). 
Recall that \(\mathbf{G}:=\Ad{\widetilde{\mathbf G}}\subseteq\op{\mathbf{Aut}}\tilde\g\) 
and let \(\Gamma=\Aut\Pi\) be the group of diagram automorphisms. 
Let \(\Gamma\) act on \(\widetilde G\) and \(G\) through the pinning given by \(\Pi\) and 
\((x_{\alpha_1},\ldots,x_{\alpha_\ell})\). The corresponding \(\Gamma\)-action on \(\tilde\g=\Lie(\widetilde G)\) permutes \(x_{\alpha_i}\) and \(x_{-\alpha_i}\), and the semidirect product \(A:=G\rtimes\Gamma\) acts on \(\tilde\g\). 
Let \(\mathbf A\) be the (smooth) affine group
scheme defined by \(A\), and let \(\iota\cl \mathbf A\to\op{\mathbf{Aut}}\tilde\g\) be the morphism of affine group schemes defined by the action of \(A\) on \(\tilde\g\) (see~\Cref{rem:reduced-homomorphism-to-group-scheme}). 
Set \(\psi=\pi\circ\iota\cl\mathbf{A}\to\op{\mathbf{Aut}}\g\).

\begin{proposition}\label{prop:aut-g-smooth-char-not2}
Let \(\g\) be a classical simple Lie algebra over \(K\). Assume \(\op{char} K\neq 2\), and assume moreover that if $\op{char}K=3$, then \(\g\) is not of type \(A_2\). Then the morphism
\(\psi\cl \mathbf A\to \op{\mathbf{Aut}}\g\)
is an isomorphism. In particular, \(\op{\mathbf{Aut}}\g\) is smooth.
\end{proposition}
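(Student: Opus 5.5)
The plan is to show three things: $\psi$ is a closed embedding, the group $\psi_K(A)$ is all of $\Aut\g$ (Steinberg), and $\dim\Der\g=\dim A$. Together these give $\mathbf A\cong\op{\mathbf{Aut}}\g$.

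\emph{Step 1: $\psi$ is a closed embedding.} By \Cref{lem:faithful-pi} and \Cref{rem:gtilde_perfect}, $\pi$ is a closed embedding, so it suffices to treat $\iota$. Since $A$ is smooth, it is enough to check that $\iota_K\colon A\to\Aut\tilde\g$ is injective and that its differential is injective.

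Injectivity on $K$-points goes as follows. Let $a=g\gamma$ act trivially on $\tilde\g$. Then $\gamma$ permutes the $T$-weights (roots) of $\tilde\g$. These stay distinct by hypothesis, and $\gamma$ permutes the simple roots, so $g$ sends each root space $\tilde\g_\alpha$ to $\tilde\g_{\gamma^{-1}\alpha}$. Comparing with the fixed Borel and maximal torus, $g$ normalizes $T$ and represents a Weyl group element $w$ with $w\gamma=1$ on $\Phi$. Since $w$ then maps $\Pi$ to $\Pi$, we get $w=1$ and $\gamma=1$. So $g\in T$ acts trivially on all root spaces, hence $g=1$ in the adjoint group $G$.

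For the differential: $\Lie(\mathbf A)=\Lie(G)$ acts on $\tilde\g$. It acts faithfully because $G$ is adjoint and $G\to\Aut\tilde\g$ is the adjoint representation of $\widetilde G$ factored through $G$, while $\Lie(G)$ is graded with the torus part $\tilde\ft\cong\Hom(\Lambda_r,K)$ acting by the distinct root values. Concretely, the root-space part of $\Lie(G)$ equals $\ad\tilde\g_\alpha$ (as in \Cref{lem:der-alg-G-ad}), which is nonzero on $\tilde\g$, and $\tilde\ft$ acts faithfully since $\Pi$ is a basis of $\Lambda_r$. A closed-embedding criterion (monomorphism of algebraic group schemes with smooth source) then finishes Step 1.

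\emph{Step 2: dimension count.} By \Cref{thm:derivations-of-g},
\[
\Der\g=\ad\g+\ft .
\]
Here $\ad\g$ has dimension $\dim\g=|\Phi|+\dim\h$, and $\ft$ contributes only diagonal operators on root spaces, with $\dim\ft\le\ell$. The overlap $\ad\g\cap\ft$ contains $\ad\h$ because $\ad h$ acts on $\g_\alpha$ by $\bar\alpha(h)$. Hence
\[
\dim\Der\g\le|\Phi|+\ell=\dim G=\dim A .
\]
On the other hand, $\Lie(\op{\mathbf{Aut}}\g)=\Der\g$ contains $d\psi(\Lie A)$, which has dimension $\dim A$ by Step 1. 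Therefore
\[
\dim\Der\g=\dim A\le\dim\Aut\g\le\dim\Der\g ,
\]
so $\op{\mathbf{Aut}}\g$ is smooth and $\psi$ identifies identity components.

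\emph{Step 3: surjectivity on points.} This uses Steinberg's computation $\Aut\g\cong G\rtimes\Gamma$ in this characteristic (\cite{Rob}), or the direct argument: any automorphism can be moved by $G$ to preserve $\h$ and the root grading, hence permutes roots; composing with $W$ and $\Gamma$ reduces it to a diagonal one coming from $T$. Once $\psi_K$ is bijective onto $\Aut\g$, a closed embedding between smooth schemes with the same $K$-points is an isomorphism.

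The main obstacle is Step 1's injectivity of the differential, equivalently that the bound in Step 2 is sharp. The risk is that, when $Z(\tilde\g)\ne0$, some of $\Lie(T)$ acts trivially on $\g$. I would handle this using the explicit description of $\ft$ acting by $u(\alpha)$ on $\g_\alpha$: the action is faithful because the $u(\alpha_i)$ are arbitrary.
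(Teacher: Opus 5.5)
Your proof is correct and is essentially the paper's argument. Both use Steinberg's description for bijectivity of $\psi_K$, injectivity of $d\iota$ and $d\pi$, and \Cref{thm:derivations-of-g} to identify $\Der\g$ with $d\psi(\Lie(\mathbf A))$, and then conclude with a standard criterion for an isomorphism of group schemes. The differences are only in packaging: the paper gets surjectivity of $d\psi$ directly from $d\pi(\ad\tilde\g)=\ad\g$ and $d\pi(\tilde\ft)=\ft$, so your dimension count using $\ad\h\subseteq\ft$ is not needed; it takes injectivity of $\psi_K$ from Steinberg rather than from your hand argument; and it finishes with \cite[Thm.~A.50]{Alb} rather than with the fact that a closed embedding into a smooth group scheme that is bijective on $K$-points is an isomorphism.
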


\begin{proof}
For \(p\neq2\), outside the exceptional case \(A_2\) in characteristic \(3\),
Steinberg's description of automorphisms shows that
\(\psi_K\cl A\to\Aut\g\) is bijective (see~\cite[\S4.2,~\S\S4.5--4.7]{Rob}). 
% The map \(\iota_K\cl A\to \Aut\tilde{\g}\) is injective by construction, 
% and \(\pi_K\) is injective by~\cite[\S3.1]{Rob}; hence \(\psi_K\) is bijective.

We next show that the differential of \(\psi\) is also bijective. 
By construction, the restriction of \(\iota\) to \(\mathbf G\) is the inclusion 
\(\bG=\Ad\tbG\subseteq\op{\mathbf{Aut}}\tilde\g\),
hence \(d\iota\) identifies \(\Lie(\mathbf G)\) with a Lie subalgebra of
\(\Lie(\op{\mathbf{Aut}}\tilde\g)=\Der\tilde\g\). 
Since \(\pi\) is a closed embedding, \(d\pi\) is injective, and hence 
\(d\psi=d\pi\circ d\iota\) is injective. 
Now recall that \(\tilde\h=\mathscr{L}(\widetilde{\mathbf T})\subseteq \tilde\g\) and 
\(\frk t=d\pi(\tilde\ft)\subseteq \Der\g\), where $\tilde\ft=\mathscr{L}(\mathbf T)\subseteq \mathscr{L}(\mathbf G)$ and $ \mathbf T=\op{Ad}\widetilde{\mathbf{T}}$. 
Since the differential of \(\Ad\cl\widetilde{\mathbf G}\to\mathbf G\) is 
\(\ad\cl\tilde\g\to\Lie(\mathbf G)\), we have \(\ad\tilde\g \subseteq \Lie(\mathbf G)\). 
Note that \(d\pi(\ad x)=\ad(x+\frk{z})\) for all \(x \in \tilde\g\), so \(d\pi(\ad\tilde\g)=\ad\g\). 
Therefore \(d\pi(\mathscr{L}(\mathbf G))\supseteq \ad\g+\frk t\).
By~\Cref{thm:derivations-of-g}, we have \(\Der\g=\ad\g+\frk t\), and so 
\(d\psi=d\pi\circ d\iota\cl \Lie(\mathbf A)=\Lie(\mathbf G)\to \Der\g\) is surjective. 

We have shown above that the map on \(K\)-points
\(\psi_K\cl A\to \Aut\g\)
is bijective, and that the differential
\(d\psi\cl \Lie(\mathbf A)\to \Lie(\op{\mathbf{Aut}}\g)=\Der\g\)
is bijective. Since \(\mathbf A\) is smooth,~\cite[Thm.~A.50]{Alb} applies and shows that \(\psi\) is an isomorphism of
affine algebraic group schemes. In particular, \(\op{\mathbf{Aut}}\g\) is smooth.
\end{proof}

The hypothesis of~\Cref{thm:derivations-of-g} fails in characteristic \(2\) and also 
for type \(A_2\) in characteristic \(3\).
In this latter case, \(\g \cong \mathfrak{psl}_3(K)\), the closed embedding \(\psi\) is not
an isomorphism, and in fact the automorphism group has type \(G_2\), see \cite[\S 7.2]{Rob}. 
A proof based on octonions is given in \cite[Cor.~4.24,~Thm.~4.26]{Alb}, which can also be used to  establish the smoothness of \(\op{\mathbf{Aut}}\g\) in this case.

\subsubsection{Characteristic \(2\).}
As discussed in \cite[\S2.6]{Rob}, there are no simple Lie algebras of type \(A_1\), \(B_\ell\), \(C_\ell\), \(F_4\), or \(G_2\) in characteristic \(2\). For the types that remain, we verify smoothness directly by checking the equivalent condition \(\dim \Der\g=\dim \Aut\g\). The derivation side is obtained from the dimensions of the outer derivation algebras computed in \cite{outerderchar2}, while the automorphism side is obtained from Steinberg's description of the automorphism groups in characteristic \(2\) in~\cite[\S 4.5, \S 7.2]{Rob}.
 
Since \(\dim\ad\g=\dim\tilde\g-\dim Z(\tilde\g)\) and \(\dim Z(\tilde\g)\) is the nullity of the reduction of the Cartan matrix modulo \(2\), we can compute the dimension of \(\Der\g\) from \Cref{tab:dimensions}, which records \(\dim(\Der\g/\op{ad}\g)=\dim H^1(\g,\g)\) for
the simple Lie algebras \(\g\) (and also the corresponding dimension for \(\tilde\g\)).

\begin{table}[htbp]
\centering
\renewcommand{\arraystretch}{0.75}
\setlength{\tabcolsep}{8pt}
\begin{tabular}{>{\centering\arraybackslash}m{0.20\textwidth}
                >{\centering\arraybackslash}m{0.30\textwidth}
                >{\centering\arraybackslash}m{0.15\textwidth}
                >{\centering\arraybackslash}m{0.14\textwidth}}
\toprule
Type of root system \(\Phi\) & \(\ell\) & \(\dim H^1(\g,\g)\) & \(\dim H^1(\tilde\g,\tilde\g)\) \\
\midrule
\(A_{\ell}\) & \(3\) & \(7\) & \(1\) \\[4pt]
\(A_{\ell}\) & \(\ell>3,\ \ell\equiv 1 \pmod{2}\) & \(1\) & \(1\) \\[4pt]
\(A_{\ell}\) & \(\ell\equiv 0 \pmod{2}\) & \(0\) & \(0\) \\[4pt]
\(D_{\ell}\) & \(4\) & \(26\) & \(2\) \\[4pt]
\(D_{\ell}\) & \(\ell>4,\ \ell\equiv 0 \pmod{2}\) & \(2\ell+2\) & \(2\) \\[4pt]
\(D_{\ell}\) & \(\ell\equiv 1 \pmod{2}\) & \(2\ell+1\) & \(1\) \\[4pt]
\(E_6,E_8\) & --- & \(0\) & \(0\) \\[4pt]
\(E_7\) & --- & \(1\) & \(1\) \\
\bottomrule
\end{tabular}
\vspace{6pt}
\caption{Dimensions of spaces of outer derivations of classical simple Lie algebras in characteristic \(2\)}
\label{tab:dimensions}
\end{table}

\subsubsection{Smoothness and its consequences}

\begin{theorem}\label{thm:aut-g-smooth}
Let \(\g\) be a classical simple Lie algebra over any field \(K\) of any
characteristic. Then the automorphism group scheme \(\op{\mathbf{Aut}}\g\) is smooth.
\end{theorem}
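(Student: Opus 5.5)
The plan is a case analysis on the characteristic, followed by a reduction to the algebraically closed case. Smoothness of an algebraic group scheme over a field \(K\) can be checked after extending scalars to an algebraic closure \(\bar K\). Moreover, \(\op{\mathbf{Aut}}(\g)\otimes_K\bar K\cong\op{\mathbf{Aut}}(\g\otimes_K\bar K)\), and \(\g\otimes_K\bar K\) is the classical simple Lie algebra of the same type over \(\bar K\), because the Chevalley construction is defined over \(\Z\). So we may assume \(K\) is algebraically closed. In characteristic \(0\) smoothness is automatic by Cartier's theorem.

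For \(\op{char}K=p>2\), outside type \(A_2\) with \(p=3\), we apply \Cref{prop:aut-g-smooth-char-not2}: it gives \(\op{\mathbf{Aut}}\g\cong\mathbf A\), which is smooth. For \(A_2\) with \(p=3\), we have \(\g\cong\mathfrak{psl}_3(K)\), and we quote \cite[Cor.~4.24,~Thm.~4.26]{Alb}. There \(\op{\mathbf{Aut}}\g\) is identified with the automorphism group scheme of the octonions, which is smooth of type \(G_2\). Equivalently, \(\dim\Der\g=14=\dim\Aut\g\).

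For \(\op{char}K=2\), the remaining types are \(A_\ell\) with \(\ell\ge2\), \(D_\ell\), \(E_6\), \(E_7\), \(E_8\). Since \(\Lie(\Aut\g)\subseteq\Der\g\) always holds, it suffices to verify \(\dim\Der\g=\dim\Aut\g\) case by case. On the derivation side,
\[
\dim\Der\g=\dim\tilde\g-\dim Z(\tilde\g)+\dim H^1(\g,\g),
\]
where \(\dim Z(\tilde\g)\) is the corank of the Cartan matrix mod \(2\) and \(H^1\) is read off \Cref{tab:dimensions}. On the automorphism side, \(\dim\Aut\g\) is read off Steinberg's description \cite[\S4.5,~\S7.2]{Rob}.

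\begin{itemize}
\item \textbf{Generic cases.} When \(\Aut\g\) has the same type as \(\g\), its dimension is \(\dim\tilde\g\). The check then reduces to \(H^1(\g,\g)\) having dimension equal to \(\dim Z(\tilde\g)\).
\begin{itemize}
\item \(A_\ell\), \(\ell\) even: corank \(0\) and \(H^1=0\).
\item \(A_\ell\), \(\ell>3\) odd: corank \(1\) and \(H^1\) has dimension \(1\).
\item \(E_6\), \(E_8\): corank \(0\) and \(H^1=0\).
\item \(E_7\): corank \(1\) and \(H^1\) has dimension \(1\).
\end{itemize}
\item \textbf{Exceptional cases.} Here the type changes, and we expect the check to be the main obstacle.
\begin{itemize}
\item \(A_3\): \(\dim\g=15-1=14\), so \(\dim\Der\g=14+7=21\). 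This matches \(\Aut\g\) of type \(C_3\) (or \(B_3\)), of dimension \(21\).
\item \(D_\ell\), \(\ell\) odd: \(\dim\g=\ell(2\ell-1)-1\), so \(\dim\Der\g=\ell(2\ell-1)+2\ell=\ell(2\ell+1)\). This matches \(\Aut\g\) of type \(B_\ell\) (or \(C_\ell\)).
\item \(D_\ell\), \(\ell>4\) even: \(\dim\g=\ell(2\ell-1)-2\), so \(\dim\Der\g=\ell(2\ell+1)\), again of type \(B_\ell\)/\(C_\ell\).
\item \(D_4\): \(\dim\g=26\) and \(\dim\Der\g=52\), which is \(\dim F_4\).
\end{itemize}
\end{itemize}
These are exactly the types recorded in \Cref{tab:exceptional-aut-groups}, so equality of dimensions holds in every case. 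Since \(\mathbf A:=\op{\mathbf{Aut}}\g\) satisfies \(\dim\Lie(\mathbf A)=\dim\mathbf A\), it is smooth, which proves the theorem.
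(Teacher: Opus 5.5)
Your proposal is correct and follows the paper's proof step for step. You reduce to algebraically closed \(K\) and use \Cref{prop:aut-g-smooth-char-not2} for \(\op{char}K\neq 2\), together with the octonion argument of \cite{Alb} for \(A_2\) in characteristic \(3\). In characteristic \(2\) you compare \(\dim\Der\g\), obtained from \Cref{tab:dimensions} and the corank of the Cartan matrix mod \(2\), with \(\dim\Aut\g\), obtained from Steinberg's description and \Cref{tab:exceptional-aut-groups}. The paper leaves that arithmetic to the reader; your explicit computations check out in every case. Your hedge between types \(B_\ell\) and \(C_\ell\) is harmless, since only the common dimension \(\ell(2\ell+1)\) matters for smoothness.
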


\begin{proof}
Since smoothness is not affected by field extension, we may assume that \(K\) is algebraically closed.
For \(\op{char}K \neq 2\), the result follows from~\Cref{prop:aut-g-smooth-char-not2} together with the treatment of
\(A_2\) in characteristic $3$ in~\cite{Alb}. Let \(K\) be an algebraically closed field of characteristic \(2\), so \(\g\) has one of the following types:
\[
A_{\ell}\ (\ell\ge 2),\qquad
D_{\ell}\ (\ell\ge 3),\qquad
E_6,\ E_7,\ E_8.
\]
Except for \(\g\) of type \(D_\ell\), \(\Aut\g\) has the same type as \(\g\), and for \(D_\ell\) including \(D_3=A_3\), the type of \(\Aut\g\) is recorded in \Cref{tab:exceptional-aut-groups}. 
In all cases, one verifies from \Cref{tab:dimensions} that \(\dim \Der\g=\dim \Aut\g\).
Hence the automorphism group scheme \(\op{\mathbf{Aut}}\g\) is smooth.
%For \(\op{char}K=2\), it follows from~\Cref{prop:aut-g-smooth-char2}.
\end{proof}

\begin{table}[htbp]
\centering
\renewcommand{\arraystretch}{0.75}
\setlength{\tabcolsep}{8pt}
\begin{tabular}{>{\centering\arraybackslash}m{0.20\textwidth}
                >{\centering\arraybackslash}m{0.25\textwidth}
                >{\centering\arraybackslash}m{0.20\textwidth}}
\toprule
\(p=\op{char}K\) & Type of \(\g\) & Type of \(\Aut\g\) \\
\midrule
\(3\) & \(A_2\) & \(G_2\) \\[4pt]
\(2\) & \(D_4\) & \(F_4\) \\[4pt]
\(2\) & \(D_{\ell}\) (\(\ell=3\) or \(\ell \geq 5\)) & \(C_{\ell}\) \\
\bottomrule
\end{tabular}
\vspace{6pt}
\caption{Type of \(\Aut\g\) in the exceptional cases}
\label{tab:exceptional-aut-groups}
\end{table}

As before, let \(G=\Ad\widetilde{G}\) and let 
\(\mathbf{A}\) be the (smooth) affine group scheme defined by \(A=G\rtimes\Gamma\), so we have an action \(\iota\cl\mathbf{A}\to\op{\mathbf{Aut}}\tilde\g\).

\begin{corollary}\label{cor:pi-isomorphism}
In the nonexceptional cases, i.e., those not listed in \Cref{tab:exceptional-aut-groups}, the morphisms 
\(\mathbf A\xrightarrow{\iota}
\op{\mathbf{Aut}}\tilde\g
\xrightarrow{\pi}
\op{\mathbf{Aut}}\g\)
are isomorphisms. 
\end{corollary}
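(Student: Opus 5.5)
The plan is to show that $\psi=\pi\circ\iota\colon\mathbf A\to\op{\mathbf{Aut}}\g$ is an isomorphism. Then the factorization will force both $\iota$ and $\pi$ to be isomorphisms as well. By \Cref{lem:faithful-pi} and \Cref{rem:gtilde_perfect}, $\pi$ is a closed embedding, so it suffices to show that $\psi$ is an isomorphism and that $\iota$ is a closed embedding. Indeed, once $\psi$ is an isomorphism, $\pi$ is a closed embedding that is also surjective as a morphism of schemes, hence an isomorphism. Then $\iota=\pi^{-1}\circ\psi$ is an isomorphism too.

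For characteristic $\neq2$ outside the case $A_2$ in characteristic $3$, \Cref{prop:aut-g-smooth-char-not2} already says that $\psi$ is an isomorphism, and we are done. The remaining nonexceptional cases are in characteristic $2$, for types $A_\ell$ ($\ell\ge2$) and $E_6,E_7,E_8$; there $\Aut\g$ has the same type as $\g$. I will follow the same route as in \Cref{prop:aut-g-smooth-char-not2}, using the criterion \cite[Thm.~A.50]{Alb}: a morphism from a smooth algebraic group scheme that is bijective on $K$-points and on Lie algebras is an isomorphism.

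First, bijectivity of $\psi_K\colon A\to\Aut\g$ comes from Steinberg's description \cite[\S4.5,~\S7.2]{Rob} in these nonexceptional types. Second, $\op{\mathbf{Aut}}\g$ is smooth by \Cref{thm:aut-g-smooth}. Hence $\dim\Der\g=\dim\Aut\g=\dim A=\dim\Lie(\mathbf A)$, and it suffices to show that $d\psi$ is injective. The map $d\iota$ is the inclusion $\Lie(\mathbf G)\subseteq\Der\tilde\g$, because $\mathbf G=\Ad\tbG$ is by definition a subgroupscheme of $\op{\mathbf{Aut}}\tilde\g$. The map $d\pi$ is injective because $\pi$ is a closed embedding. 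So $d\psi$ is injective, hence bijective by the dimension count.

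The main obstacle I expect is the identification of $\Lie(\mathbf G)$, for $\mathbf G$ the scheme-theoretic image $\Ad\tbG$, with the Lie algebra of the smooth group $G$. The point is that $\mathbf G$ must be exactly the smooth group scheme of $G$, which requires the image of a smooth group scheme to be smooth. Images of smooth (reduced) group schemes are reduced, so this holds, but it should be stated explicitly. The other delicate point is citing the characteristic-$2$ bijectivity of $\psi_K$ correctly: for $A_\ell$ with $\ell$ odd and for $E_7$, the center $Z(\tilde\g)$ is nonzero, and one must check from \cite{Rob} that no extra automorphisms of $\g$ beyond $G\rtimes\Gamma$ arise. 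The dimension match $\dim\Der\g=\dim G$ from \Cref{tab:dimensions} provides a consistency check on this.
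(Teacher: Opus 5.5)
Your proof is correct. It rests on the same inputs as the paper: Steinberg's bijectivity of \(\psi_K\), smoothness of \(\op{\mathbf{Aut}}\g\), injectivity of \(d\pi\) for the closed embedding \(\pi\), and \cite[Thm.~A.50]{Alb}. The difference is the order in which you use them.

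The paper first runs the chain \(\dim\op{\mathbf{Aut}}\tilde\g\le\dim\Der\tilde\g\le\dim\Der\g=\dim\op{\mathbf{Aut}}\g\le\dim\op{\mathbf{Aut}}\tilde\g\), where the last inequality comes from surjectivity of \(\pi_K\). This gives smoothness of \(\op{\mathbf{Aut}}\tilde\g\) and bijectivity of \(d\pi\). The paper then applies Thm.~A.50 to \(\pi\), and afterwards to \(\iota\).

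You apply Thm.~A.50 only once, to \(\psi=\pi\circ\iota\), whose source \(\mathbf A\) is smooth by construction. You then deduce the statements about \(\pi\) and \(\iota\) formally, and smoothness of \(\op{\mathbf{Aut}}\tilde\g\) comes out as a byproduct. This is slightly more economical. The paper's route has the minor advantage of making \(\dim\Der\tilde\g=\dim\Der\g\) explicit along the way.

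Two small corrections are needed.

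First, your list of nonexceptional types in characteristic \(2\) is wrong. It should be \(A_\ell\) with \(\ell\ge2\), \(\ell\ne3\), together with \(E_6,E_7,E_8\). The case \(A_3=D_3\) appears in \Cref{tab:exceptional-aut-groups} with \(\Aut\g\) of type \(C_3\). There \(\psi_K\) cannot be bijective, since \(\dim A=15<21=\dim\Der\g\). This does not affect the corollary, which excludes that case, but your claim that \(\Aut\g\) has the same type as \(\g\) for all \(A_\ell\) is false.

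Second, "a closed embedding that is surjective as a morphism of schemes is an isomorphism" is only valid if surjective means surjective on \(\mathcal R\)-points for every \(\mathcal R\). Topological surjectivity alone does not suffice, because nilpotent thickenings are counterexamples. What you actually have is cleaner: \(\iota\circ\psi^{-1}\) is a right inverse of \(\pi\). Since \(\pi\) is a monomorphism, \(\pi\circ(\iota\circ\psi^{-1})\circ\pi=\pi\) forces \((\iota\circ\psi^{-1})\circ\pi=\mathrm{id}\), so \(\pi\) is an isomorphism. Then \(\iota=\pi^{-1}\circ\psi\) is an isomorphism as well. You do not need the separate claim that \(\iota\) is a closed embedding.
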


\begin{proof}
As in \Cref{prop:aut-g-smooth-char-not2}, consider \(\psi=\pi\circ\iota\).
Steinberg's description of automorphisms shows
that \(\psi_K\) is bijective, and hence \(\pi_K\) is surjective. Since \(d\pi\) is injective by \Cref{lem:faithful-pi} and \(\op{\mathbf{Aut}}\g\) is smooth by
\Cref{thm:aut-g-smooth}, one has
\(\dim\op{\mathbf{Aut}}\tilde\g
\le \dim\Der\tilde\g
\le \dim\Der\g
= \dim\op{\mathbf{Aut}}\g\).
Since \(\pi_K\) is surjective, we have
\(\dim\op{\mathbf{Aut}}\tilde\g
\geq \dim\op{\mathbf{Aut}}\g\).
Thus all the displayed inequalities are equalities,
so \(\op{\mathbf{Aut}}\tilde\g\) is smooth and \(d\pi\) is bijective.
Therefore \(\pi\) is an isomorphism by~\cite[Thm.~A.50]{Alb}. It follows that \(\iota_K\) is bijective. Since \(d\iota\) is the inclusion \(\Lie(G) \to \Der\tg\), it is bijective by dimension count, hence \(\iota\) is an isomorphism.  
\end{proof}

Recall from \Cref{lem:der-alg-G-ad} that the simple Lie algebra \(\g\cong\ad\tilde\g\) can be obtained as the derived algebra of \(\Lie(G)\). 
Denote \(\frk a=\Lie(G)=\Lie(A)\) and consider the morphism \(\pi'\cl \op{\bf{Aut}}\frk{a} \to \op{\bf{Aut}}\frk a^{(1)}\)  defined by restriction. Since \(\frk a=\Der\tilde\g\) by \Cref{cor:pi-isomorphism}, conjugation defines a morphism
\(\Int\colon\bAut\tilde\g\to\op{\mathbf{Aut}}\frk a\). 
%where \(\Int_R(\varphi)(D):=\varphi D\varphi^{-1}\).

\begin{corollary}\label{cor:all-automorphism-maps-isomorphisms}
In the nonexceptional cases, i.e., those not listed in \Cref{tab:exceptional-aut-groups}, all the morphisms in the sequence
\(\mathbf A\xrightarrow{\iota}
\op{\mathbf{Aut}}\tilde\g
\xrightarrow{\Int}
\op{\mathbf{Aut}}\frk a
\xrightarrow{\pi'}
\op{\mathbf{Aut}}\frk a^{(1)}\)
are isomorphisms.
\end{corollary}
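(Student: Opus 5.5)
We already know from \Cref{cor:pi-isomorphism} that \(\iota\) is an isomorphism, so \(\mathbf A\cong\op{\mathbf{Aut}}\tilde\g\). It remains to show that \(\Int\) and \(\pi'\) are isomorphisms. The strategy is to prove that the composite \(\Phi:=\pi'\circ\Int\circ\iota\cl\mathbf A\to\op{\mathbf{Aut}}\frk a^{(1)}\) is an isomorphism. Then \(\pi'\circ\Int\) is an isomorphism, so \(\Int\) is a closed embedding and \(\pi'\) is surjective.

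\textbf{Identifying the composite.} By \Cref{lem:der-alg-G-ad}, \(\frk a^{(1)}=\ad\tilde\g\cong\tilde\g/Z(\tilde\g)=\g\). Under this identification, for \(\varphi\in\op{\mathbf{Aut}}\tilde\g(\mathcal R)\) and \(x\in\tilde\g_{\mathcal R}\) we have \(\varphi(\ad x)\varphi^{-1}=\ad(\varphi(x))\). Hence \(\pi'\circ\Int\) is exactly the morphism \(\pi\cl\op{\mathbf{Aut}}\tilde\g\to\op{\mathbf{Aut}}\g\) of \Cref{lem:faithful-pi}. This is an isomorphism by \Cref{cor:pi-isomorphism}, so \(\Phi=\pi\circ\iota=\psi\) is an isomorphism.

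\textbf{Showing \(\pi'\) is a closed embedding.} Given the above, it suffices to show that \(\pi'\) is a monomorphism, i.e.\ has trivial kernel on \(\mathcal R\)-points. Then \(\pi'\) is an isomorphism, and so is \(\Int=(\pi')^{-1}\circ\pi\). To get this we apply \Cref{lem:faithful-restriction-to-ideal} with \(\frk l=\frk a\) and \(\frk m=\frk a^{(1)}\), which requires two hypotheses.

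The first hypothesis is that the derived algebra is invariant under \(\op{\mathbf{Aut}}\frk a\). This holds because every automorphism of \(\frk a_{\mathcal R}\) maps commutators to commutators, and \((\frk a^{(1)})_{\mathcal R}\) is the \(\mathcal R\)-span of commutators.

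The second hypothesis is \(C_{\frk a}(\frk a^{(1)})=0\), and this is the only real check. Here \(\frk a=\Der\tilde\g\), since \(d\iota\) is bijective. Take \(D\in\Der\tilde\g\) with \([D,\ad x]=\ad(D(x))=0\) for all \(x\in\tilde\g\). Then \(D(\tilde\g)\subseteq Z(\tilde\g)\). Because \(\tilde\g\) is perfect (\Cref{rem:gtilde_perfect}), we get \(D([x,y])=[Dx,y]+[x,Dy]=0\) for all \(x,y\), so \(D=0\).

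The main obstacle is making sure the identification \(\frk a^{(1)}\cong\g\) is compatible with \(\pi\) scheme-theoretically, i.e.\ that \(\ad\) induces an isomorphism \(\g_{\mathcal R}\to(\frk a^{(1)})_{\mathcal R}\) that is equivariant for the two actions of \(\op{\mathbf{Aut}}\tilde\g(\mathcal R)\). This follows from flatness over the field \(K\) together with the displayed conjugation formula.
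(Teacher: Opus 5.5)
Your proposal is correct and follows the paper's proof essentially step by step. You identify \(\pi'\circ\Int\) with \(\pi\) via \(\ad\colon\g\xrightarrow{\sim}\frk a^{(1)}\), use \Cref{lem:faithful-restriction-to-ideal} with \(C_{\frk a}(\frk a^{(1)})=0\) to make \(\pi'\) injective, and conclude from the composite being an isomorphism. The only differences are cosmetic. You check the \(\op{\mathbf{Aut}}\frk a\)-invariance of \(\frk a^{(1)}\) explicitly, which the paper leaves implicit. You also derive \(C_{\frk a}(\frk a^{(1)})=0\) directly from the perfectness of \(\tilde\g\) rather than from the injectivity of \(d\pi\), which amounts to the same argument.
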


\begin{proof}
Consider the isomorphism \(\vartheta\cl \g \to \frk a^{(1)}\) that sends \(x+\frk z \to \ad(x)\), where \(\frk z=Z(\tg)\), and the corresponding isomorphism \(\theta\cl\bAut\g\to\bAut\frk a^{(1)}\).  We claim that
\(\theta\circ\pi=\pi'\circ\Int\). Indeed, let \(\mathcal R\) be a commutative associate unital \(K\)-algebra and let
\(\varphi\in(\op{\mathbf{Aut}}\tilde\g)(\mathcal R)\). Since \(\frk a^{(1)}=\ad\tilde\g\), let \(x\in\tilde\g_{\mathcal R}\) and consider 
\(\Int_{\mathcal R}(\varphi)(\ad x)=\varphi(\ad x)\varphi^{-1}=\ad(\varphi(x))\).
Consequently,
\begin{align*}
(\pi'_{\mathcal R}\circ\Int_{\mathcal R})(\varphi) \bigl[\vartheta_{\mathcal R}\bigl(x+\frk z_{\mathcal R}\bigr)\bigr]
&=\Int_{\mathcal R}(\varphi)(\ad x)
 =\ad(\varphi(x))
\\ &=\vartheta_{\mathcal R}(\varphi(x)+\frk z_{\mathcal R})
=\vartheta_{\mathcal R}\bigl[\pi_{\mathcal R}(\varphi)(x+\frk z_{\mathcal R})\bigr].
\end{align*}
Thus \((\pi'_{\mathcal R}\circ\Int_{\mathcal R})(\varphi)=\vartheta_{\mathcal R}\pi_{\mathcal R}\vartheta_{\mathcal R}^{-1}\), as claimed.  

By \Cref{cor:pi-isomorphism}, \(\pi\) and \(\iota\) are isomorphisms. Next we show that \(C_{\frk a}(\frk a^{(1)})=0\). Let
\(D\in C_{\frk a}(\frk a^{(1)})\). Since \(\frk a^{(1)}=\ad\tilde\g\), we have
\(0=[D,\ad x]=\ad(D(x))\) for every \(x\in\tilde\g\). Hence
\(D(\tilde\g)\subseteq Z(\tilde\g)\), so \(d\pi(D)=0\). Since \(d\pi\) is injective, we get \(D=0\), proving the claim. Now
\Cref{lem:faithful-restriction-to-ideal} shows that \(\pi'\) is a closed embedding. Since \(\pi' \circ \Int\) is an isomorphism, it follows that \(\pi'\) is an isomorphism. Consequently,
\(\Int=(\pi')^{-1}\circ\pi\)
is also an isomorphism. 
\end{proof}

\begin{theorem}\label{thm:aut-group-scheme-classical}
Let \(\g\) be a classical simple Lie algebra over an algebraically closed field \(K\) of any characteristic. Let \(G\) be the adjoint simple algebraic group whose type
agrees with that of \(\g\) in the nonexceptional cases and is given
in~\Cref{tab:exceptional-aut-groups} in the exceptional cases. Let \(\Gamma\)
be the group of diagram automorphisms of \(G\), \(A=G\rtimes\Gamma\), and 
\(\mathbf A\) the corresponding smooth affine group scheme. Set
\(\frk a:=\Lie(\mathbf A)=\Lie(\mathbf G)\) and let \(\frk m\) be the following ideal: \(\frk m=\frk a^{(1)}\) in the nonexceptional cases and \(\frk m\) is spanned by \(x_\alpha\) and \(h_\alpha\) for all short roots of \(\frk a\).
Then \(\frk m\) is \(\mathbf A\)-invariant and isomorphic to \(\g\), and restriction defines an isomorphism of affine group schemes
\(\psi\colon\mathbf A\rightarrow\op{\mathbf{Aut}}\g\). 
\end{theorem}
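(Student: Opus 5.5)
The proof splits into the nonexceptional and the exceptional cases.

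\emph{Nonexceptional cases.} Here essentially everything is already contained in \Cref{cor:all-automorphism-maps-isomorphisms}. Since \(G=\Ad\widetilde G\) has the same type as \(\g\), \Cref{lem:der-alg-G-ad} gives \(\frk m=\frk a^{(1)}=\ad\tilde\g\cong\g\), via the map \(\vartheta\) used in the proof of \Cref{cor:all-automorphism-maps-isomorphisms}. The \(\mathbf A\)-invariance of \(\frk m\) is automatic, because the derived algebra is preserved by every automorphism of \(\frk a_{\mathcal R}\) for every \(\mathcal R\). Restriction \(\mathbf A\to\op{\mathbf{Aut}}\frk m\) is the composite \(\pi'\circ\Int\circ\iota\), where \(\mathbf A\) acts on \(\frk a\) by the adjoint action, which under \(\frk a=\Der\tilde\g\) is \(\Int\circ\iota\). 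By \Cref{cor:all-automorphism-maps-isomorphisms} this composite is an isomorphism. Transporting along \(\vartheta\) then gives \(\psi\).

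\emph{Exceptional cases} (from \Cref{tab:exceptional-aut-groups}: \(G_2\) with \(p=3\), \(F_4\) with \(p=2\), and \(C_\ell\) with \(p=2\)). Now \(G\) is the adjoint group of type \(G_2\), \(F_4\) or \(C_\ell\), and \(\Gamma\) is trivial. I would argue in the following steps.

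\begin{enumerate}
\item Show that the span \(\frk m\) of the short root vectors \(x_\alpha\) and the \(h_\alpha\) (\(\alpha\) short) is an ideal of \(\frk a\). Use Chevalley commutator relations: \([x_\beta,x_\alpha]\) for \(\beta\) long and \(\alpha\) short is either short or long with structure constant \(\pm p\), which vanishes, and similarly for \(\bar\beta(h_\alpha)\). This is the standard fact that in these characteristics the short roots span an ideal, the kernel of the special isogeny.
\item Identify \(\frk m\cong\g\) by matching the short roots with a root system of the type of \(\g\): short roots of \(G_2\) give \(A_2\), of \(F_4\) give \(D_4\), and of \(C_\ell\) give \(D_\ell\). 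This is \cite[\S7.2]{Rob}.
\item Check \(\mathbf A\)-invariance. Apply \(\Ad(T)\) root-space by root-space, and use the root subgroups \(U_\beta\), whose adjoint action is given by the exponential-type Chevalley formulas with coefficients in \(\Z\). Since \(\mathbf A\) is smooth and generated by \(T\) and the \(U_\beta\), invariance under these points and their \(\mathcal R\)-points suffices.
\item Restriction gives a morphism \(\psi\colon\mathbf A\to\op{\mathbf{Aut}}\g\). By Steinberg \cite[\S7.2]{Rob}, \(\psi_K\) is bijective.
\item Show \(d\psi\) is injective, which amounts to \(C_{\frk a}(\frk m)=0\). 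This can be checked on root spaces: long root vectors do not centralize all short ones, and elements of \(\Lie(T)\) centralizing \(\frk m\) vanish on all short roots, which span \(\frk X(T)\otimes K\) for the adjoint group.
\item Since \(\op{\mathbf{Aut}}\g\) is smooth by \Cref{thm:aut-g-smooth}, \(\dim\Der\g=\dim\Aut\g=\dim\frk a\). Hence the injective map \(d\psi\) is bijective, and \cite[Thm.~A.50]{Alb} gives that \(\psi\) is an isomorphism.
\end{enumerate}

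I expect the main obstacle to be the \(\mathbf A\)-invariance and ideal verification in the exceptional cases, especially \(F_4\) with \(p=2\). There, the structure constants need care, and one should cite the special isogeny \(G\to G'\) whose differential has kernel \(\frk m\) to avoid case-by-case computation. Realizing \(\frk m\) as the kernel of the differential of an \(\mathbf A\)-equivariant isogeny would give invariance immediately, so I would try that route first.
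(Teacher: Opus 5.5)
Your proposal is correct and follows essentially the paper's route. The nonexceptional cases are exactly \Cref{cor:all-automorphism-maps-isomorphisms}; in the exceptional cases the paper likewise takes from Steinberg the ideal property, \(\frk m\cong\g\), \(G\)-stability (upgraded to \(\mathbf G\)-stability by smoothness) and bijectivity of \(\psi_K\), and gets injectivity of \(d\psi\) from \(C_{\frk a}(\frk m)=0\). Your root-space check of \(C_{\frk a}(\frk m)=0\) and your explicit dimension count via \Cref{thm:aut-g-smooth} before applying \cite[Thm.~A.50]{Alb} only spell out what the paper cites or leaves implicit.

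There is one small slip in your step 1. In \(G_2\), \(F_4\) and \(C_\ell\), a long root plus a short root is always short, so the vanishing \(\pm p\) structure constants you need come from pairs of short roots whose sum is long.
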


\begin{proof}
The nonexceptional cases are covered by \Cref{cor:all-automorphism-maps-isomorphisms}, 
so we consider the exceptional cases: \(\mathbf G\) has type \(G_2\) in characteristic \(3\), or type \(F_4\) or \(C_\ell\) in characteristic \(2\), so \(\Gamma\) is trivial.
The fact that \(\frk m\) is an ideal isomorphic to \(\g\) is proved in \cite[\S 2]{Rob}, and the fact that \(\frk m\) is \(G\)-stable in \cite[\S 7]{Rob}, which implies that it is \(\mathbf{G}\)-stable since \(\mathbf{G}\) is smooth. Moreover, Steinberg's description of automorphisms shows that 
\(\psi_K\colon A\to\Aut\g\) is a bijection. 
The kernel of the map \(d\psi\) is \(C_{\frk a}(\frk m)\), which is zero according to \cite{Rob},  
% Write
% \(\frk a=
% \frk t\oplus\bigoplus_{\beta\in\Phi}Kx_\beta\)
% for the root-space decomposition with respect to the maximal torus \(T\subset G\). Let
% \(x\in C_{\frk a}(\frk m)\), and write
% \(x=h+\sum_{\beta\in\Phi}c_\beta x_\beta,
% \ h\in\frk t\).
% Since \(\frk m\) contains \(x_\alpha\) for every short root
% \(\alpha\), we have \([x,x_\alpha]=0\) for every such \(\alpha\).
% For each \(\beta\in\Phi\), a direct inspection of the root systems
% \(G_2\), \(F_4\), and \(C_\ell\) shows that there exists a short root
% \(\alpha\) such that
% \([x_\beta,x_\alpha]\neq0\).
% Projecting the equality
% \([x,x_\alpha]=0\) onto the root space
% \(\frk a_{\beta+\alpha}\), or onto \(\frk t\) when
% \(\beta=-\alpha\), gives
% \(c_\beta[x_\beta,x_\alpha]=0\).
% Hence \(c_\beta=0\) for every \(\beta\in\Phi\), and therefore
% \(x=h\in\frk t\).
% Now, for every short root \(\alpha\), we have 
% \(0=[h,x_\alpha]=\bar\alpha(h)x_\alpha\),
% and hence \(\bar\alpha(h)=0\). Since the short roots generate the root lattice, it follows that \(h=0\) because \(G\) is of adjoint type and hence \(\frk{X}(T)\) is the root lattice. Thus
% \(C_{\frk a}(\frk m)=0\),
so \(d\psi\) is injective.
\end{proof}

\subsection{Cyclic gradings on classical simple Lie algebras}\label{subsec-cyclic-gradings-on-simple}

We now specialize the classification result obtained in~\Cref{sec:mu-embeddings-semidirect}, namely \Cref{thm:final-classification-mu-embeddings}, to the
automorphism group scheme of a classical simple Lie algebra \(\g\). Recall from \Cref{sec:duality} that a \(\bb{Z}_m\)-grading of \(\g\) is equivalent to a morphism of affine group schemes
\(\eta\cl\boldsymbol{\mu}_m\to\op{\mathbf{Aut}}\g\), and two such gradings are
isomorphic if and only if the corresponding morphisms are conjugate under
\(\Aut\g\). By \Cref{thm:aut-group-scheme-classical}, \(\bAut\g\) is the smooth affine group scheme associated to \(A=G\rtimes\Gamma\), where $G$ is an adjoint  simple algebraic group, of the same type as \(\g\) except in some cases in characteristic \(2\) and \(3\).   

A \(\Z_m\)-grading of \(\g\) is called   \emph{inner} if its associated morphism
\(\boldsymbol{\mu}_m\to\op{\mathbf{Aut}}\g\) is of inner type, equivalently if
its image is contained in \(\mathbf{G}\). It is said to be of \emph{outer type \(\sigma\)} if the associated morphism has outer type \(\sigma\neq 1\), in which case the order \(q\) of \(\sigma\) is \(2\) or \(3\). 
%Accordingly, an inner grading is said to be of \emph{Type I}, and an outer grading is said to be of \emph{Type II} if \(q=2\) and of \emph{Type III} if \(q=3\).

\begin{theorem}\label{thm:cyclic-gradings-final-theorem}
Let \(\g\) be a classical simple Lie algebra over an algebraically closed field \(K\). Let \(T\) be a maximal torus of \(\Aut\g=G \rtimes \Gamma\) and \(\Phi\) the associated root system with 
diagram automorphism group \(\Gamma\). For \(\sigma \in \Gamma\), let \(\Phi_\sigma\) be the folded root system, let \(X=\frk X(T^\sigma)=\Z\Phi_\sigma\), and let 
\(W^{(\sigma)}_{\mathrm{lat}}=W^{(\sigma)}_{0,\mathrm{lat}} \rtimes W^\sigma\)  
be the \(\sigma\)-Weyl group as in Equation~\eqref{eq:sigma-weyl-group-lattice}. 
\begin{enumerate}
\item[(1)] The set of isomorphism classes of inner $\Z_m$-gradings on \(\g\) is in bijection with the $(W\rtimes \Gamma)$-orbits on $\Hom_{\Z}(\frk X(T),\Z_m)$. 
\item[(2)] Let \(\sigma\in\Gamma\) be of order \(q \neq 1\). If \(\operatorname{char}K=p>0\) and \(p\mid q\), then there are no \(\mathbb Z_m\)-gradings of outer type \(\sigma\). Otherwise, the set of isomorphism classes of \(\mathbb Z_m\)-gradings on \(\g\) of outer type \(\sigma\) is in bijection with the set of \(W^{(\sigma)}_{\mathrm{lat}}\)-orbits on
\(\Hom_{\mathbb Z}(X\oplus\mathbb Z_q,\mathbb Z_m)_\sigma\). 
\end{enumerate}
\end{theorem}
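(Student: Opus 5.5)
The plan is to combine the dictionary of \Cref{sec:duality} with the group-scheme identification of \Cref{thm:aut-group-scheme-classical} and the classification of \Cref{thm:final-classification-mu-embeddings}. The only additional work is passing from \(G\)-conjugacy to conjugacy under \(A=G\rtimes\Gamma\).

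By the Proposition in \Cref{sec:duality}, isomorphism classes of \(\Z_m\)-gradings on \(\g\) correspond to \(\Aut\g\)-conjugacy classes of morphisms \(\boldsymbol{\mu}_m\to\op{\mathbf{Aut}}\g\). By \Cref{thm:aut-group-scheme-classical}, \(\psi\cl\mathbf A\to\op{\mathbf{Aut}}\g\) is an isomorphism, with \(A=G\rtimes\Gamma\) and \(G\) adjoint. Hence we need \(A\)-orbits on \(\Hom(\boldsymbol{\mu}_m,\mathbf A)=\bigsqcup_\sigma\mathcal H_\sigma\). These blocks are permuted by \(A\) as described in \Cref{prop:A-vs-G-fixed-outer-block}.

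\textbf{Part (2).} Take \(\sigma\neq1\). If \(p\mid q\), then \(\mathcal H_\sigma=\varnothing\) by \Cref{lem:component-image-prime-to-p}, since \(\sigma\) would have order prime to \(p\). Otherwise, \(\Gamma\) is one of \(1\), \(\Z_2\) or \(S_3\), so by \Cref{rem:irreducible-centralizer-condition} we have \(C_\Gamma(\sigma)=\langle\sigma\rangle\). \Cref{prop:A-vs-G-fixed-outer-block} then shows that \(A\)-conjugacy and \(G\)-conjugacy agree inside \(\mathcal H_\sigma\).

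Two caveats remain. First, in type \(D_4\), conjugate \(\sigma\)'s give different blocks that are identified by \(A\). The statement should therefore be read with one \(\sigma\) per \(\Gamma\)-conjugacy class. Alternatively, one notes that the outer type is only defined up to \(\Gamma\)-conjugacy, and by \Cref{rem:xi-choice} up to powers. Second, the case \(q\nmid m\) gives an empty set on both sides. With these points settled, \Cref{thm:final-classification-mu-embeddings} gives the bijection with \(W^{(\sigma)}_{\mathrm{lat}}\)-orbits on \(\Hom(X\oplus\Z_q,\Z_m)_\sigma\).

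It remains to identify \(X\). Here \(X=\frk X(S)\) with \(S=(T^\sigma)^\circ\). Since \(G\) is adjoint, \(\Lambda=\Z\Phi\) has a basis \(\Pi\) permuted by \(\sigma\). So \(T^\sigma\) is connected, and \(X\cong p_\sigma(\Lambda)=\Z\Phi_\sigma\) by \Cref{rem:folded}.

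\textbf{Part (1).} For \(\sigma=1\) we have \(q=1\), \(S^{(1)}=T\) and \(W^{(1)}=W\). \Cref{thm:block-reduction-Weyl-orbits}, with Cartier duality, identifies \(G\)-classes in \(\mathcal H_1\) with \(W\)-orbits on \(\Hom(\frk X(T),\Z_m)\). The block \(\mathcal H_1\) is \(A\)-stable. The residual action of \(\Gamma\) is realized by the pinned elements \(\gamma\in\Gamma\subset A\). These normalize \(T\) and act on \(\frk X(T)\) by the diagram automorphism, so \(A\)-orbits become \((W\rtimes\Gamma)\)-orbits.

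To make this rigorous I would apply \Cref{lem:conj-of-p-part} directly in \(\mathbf A\) with the maximal torus \(T\). After \Cref{thm:block-reduction-Weyl-orbits}, inner morphisms can be placed in \(\mathbf T\), and \(A\)-conjugacy then reduces to \(N_A(T)\)-conjugacy. I would then check that \(N_A(T)/T\cong W\rtimes\Gamma\): an element \(g\gamma\) normalizing \(T\) has \(g\in N_G(T)\), because \(\gamma\) normalizes \(T\).

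\textbf{Expected difficulties.} The main obstacle is bookkeeping rather than substance. One must handle the \(D_4\) triality case in (2) correctly and ensure the normalizer computation \(N_A(T)/T=W\rtimes\Gamma\) is valid in the exceptional cases of \Cref{tab:exceptional-aut-groups}. There \(\Gamma\) is trivial, so this case is easy.
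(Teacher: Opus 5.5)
Your proposal is correct and follows essentially the paper's route. You combine the duality with the isomorphism \(\mathbf A\cong\op{\mathbf{Aut}}\g\), apply \Cref{thm:final-classification-mu-embeddings} block by block, and use \Cref{prop:A-vs-G-fixed-outer-block} with \Cref{rem:irreducible-centralizer-condition} to pass from \(G\)- to \(A\)-conjugacy in the outer case; your treatment of (1), via \Cref{lem:conj-of-p-part} in \(\mathbf A\) together with \(N_A(T)=N_G(T)\rtimes\Gamma\), is simply a more explicit version of the paper's one-line remark that passing to \(\Aut\g\)-conjugacy introduces the \(\Gamma\)-action.
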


\begin{proof}
(1) The case \(\sigma=\mathrm{id}\) of~\Cref{thm:final-classification-mu-embeddings} 
parametrizes morphisms \(\boldsymbol{\mu}_m\to\mathbf{G}\) up to \(G\)-conjugation by 
the \(W^{(\mathrm{id})}\)-orbits on \(\Hom_{\mathbb Z}\bigl(\frk X(T),\mathbb Z_m\bigr)\).
Passing from \(G\)-conjugacy to \((\Aut\g)\)-conjugacy introduces the diagram-automorphism 
action of \(\Gamma\). 
Hence the isomorphism classes of inner gradings are parametrized by the \((W\rtimes\Gamma)\)-orbits.

(2) Let \(\sigma\neq\mathrm{id}\). If \(p\mid q\), then the block \(\mathcal H_\sigma\) of \(\Hom(\boldsymbol{\mu}_m,\mathbf{G}\rtimes\mathbf{\Gamma})\) is empty, so there are no gradings of outer type \(\sigma\).
Assume therefore that \(p\nmid q\). By~\Cref{thm:final-classification-mu-embeddings}, the \(G\)-conjugacy classes in the block \(\mathcal{H}_\sigma\) are
parametrized by the \(W_{\mathrm{lat}}^{(\sigma)}\)-orbits on
\(\Hom_{\mathbb Z}\bigl(X\oplus\mathbb Z_q,\mathbb Z_m\bigr)_\sigma\)
where \(X=\frk{X}(S)\) and \(S=T^\sigma\) (see \Cref{rem:connectedTsigma}).
By \Cref{rem:irreducible-centralizer-condition}, in this case \(G\)-conjugacy and \((\Aut\g)\)-conjugacy coincide.
\end{proof}

The classification in~\Cref{thm:cyclic-gradings-final-theorem} depends only on the root datum 
of \(G=(\Aut\g)^\circ\) and not on the field \(K\), except for the condition
\(\operatorname{char}K\nmid\operatorname{ord}(\sigma)\) and the fact that the type of \(G\) 
may be different from that of \(\g\) (see \Cref{tab:exceptional-aut-groups}). 
We may therefore use the usual Kac-coordinate description from characteristic \(0\),
which we very briefly recall for completeness (see e.g. \cite[\S 3.6,\S 3.10]{Vin} or \cite[\S2,\S3]{reeder}) for details. Let \(Y=\frk{Y}(T^\sigma)\) and \(E=Y\otimes_\Z\mathbb{R}\) (cf. \Cref{rem:action_real_torus}).

In the inner case, \(X\) is the root lattice of \(\Phi\) and \(Y\) is the weight lattice of \(\Phi^\vee\), so homomorphisms \(\varrho\cl X\to\Z_m\) are represented by points in \(E/Y\), 
or equivalently orbits in \(E\) under \(Y\)-translations. The orbit corresponding to \(\varrho\) is \((x_1,\ldots,x_\ell)+Y\) where \(x_i\in\frac{1}{m}\varrho(\alpha_i)+\Z\). 
The group \(\widetilde{W}:=Y\rtimes W\) acts by affine transformations of \(E\), 
and its normal subgroup \(\widetilde{W}_r:=(\Z\Phi^\vee)\rtimes W\), known as the affine Weyl 
group, is generated by reflections and acts simply transitively on the so-called Weyl alcoves. 
It follows that \(\widetilde{W}=\widetilde{W}_r\rtimes\Omega\) where \(\Omega\) is the 
stabilizer of one alcove in \(\widetilde{W}\). (Note that \(\Omega\cong Y/\Z\Phi^\vee\), 
the fundamental group of \(G\).) The standard alcove is defined by \(x_i\ge 0\) for \(0\le i\le\ell\) where \(x_0:= 1-\sum_{i=1}^\ell n_i x_i\) and \(n_i\) are the positive integers expressing the highest root \(\delta\) as \(\sum_{i=1}^\ell n_i\alpha_i\); they are known as the 
labels of the (untwisted) affine Dynkin diagram of \(\Phi\), with the additional 
``affine node'' \(\alpha_0:=-\delta\) assigned the label \(n_0:=1\). 
Hence each \((W\rtimes\Gamma)\)-orbit in \(E/Y\) has a representative in the alcove, 
with two points representing the same orbit if and only if they 
are conjugate by the action of \(\Omega\rtimes\Gamma\), which turns out to be the full symmetry 
group of the alcove. In particular, each isomorphism class of \(\Z_m\)-gradings is represented
by nonnegative integers \((s_0,\ldots,s_\ell)\), called the \emph{inner Kac coordinates}, 
satisfying \(\sum_{i=0}^{\ell}n_i s_i=m\). 

For an outer type \(\sigma\ne\mathrm{id}\), a similar analysis yields the 
\emph{outer Kac coordinates}, which are nonnegative integers \((p_0,\ldots,p_{\ell(\sigma)})\) 
satisfying \(\sum_{i=0}^{\ell(\sigma)}n_i p_i=m\), where \(n_i\) are the labels of the
corresponding twisted affine diagram, which can be defined as follows. 
Recall from Equation~\eqref{eq:sigma-eigenspac-decom} the grading 
\(\g=\bigoplus_{\bar k\in\Z_q}\g_{\bar k}\) defined by \(\sigma\) on the 
complex simple Lie algebra with root system \(\Phi\). Then the root system of \(\g_{\bar 0}\) 
is the reduced system \(\Psi\) corresponding to \(\Phi_\sigma\), 
\(n_i:=q n'_i\) where the positive integers \(n'_i\) are the coefficients expressing 
the highest weight \(\delta_\sigma\) of the \(\g_{\bar 0}\)-module \(\g_{\bar 1}\) 
as a linear combination of a base \(\{\beta_1,\ldots,\beta_{\ell(\sigma)}\}\) of \(\Psi\), 
and the ``affine node'' \(\beta_0:=(-\delta_\sigma,\bar{1})\in X\oplus\Z_q\) 
is assigned the label \(n_0:=q\).

In either case, the Kac coordinates are defined up to permutations 
that correspond to automorphisms of the appropriate affine Dynkin diagram.  

We finish by recording how the Kac coordinates determine the homogeneous components
of the corresponding \(\mathbb Z_m\)-grading of \(\g\) over \(K\), which is obtained from
the \(\sigma\)-root decomposition of \(\frk{a}:=\Lie(G)\) by restricting to \(\g\) and 
coarsening via the homomorphism \(\varrho\cl X\oplus\Z_q\to\Z_m\) 
defined by the Kac coordinates. 
Note that the \(\Z_q\)-grading defined by \(\sigma\) is a coarsening of this \(\Z_m\)-grading 
(cf. \Cref{fig:coarsening-refinement-diagram}) and also its special case (corresponding to
the \(0\)-th coordinate equal to \(1\) and all others \(0\)).

\begin{remark}\label{rem:almost_fine}
Since \(\Aut\g=G\rtimes\Gamma\), \(T^\sigma\) is a maximal torus of \(G^\sigma\), and 
\(S^{(\sigma)}=\langle T^\sigma,\sigma\rangle\), the \(\sigma\)-root decomposition of 
\(\g\) is, in the terminology of \cite{almostfinegrading}, the canonical almost fine refinement associated to 
either the \(\Z_m\)- or \(\Z_q\)-grading. Moreover, since the normalizer of \(S^{(\sigma)}\) 
in \(\Aut\g\) is \(N= N_G(S^{(\sigma)})\rtimes N_\Gamma(\langle\sigma\rangle)\), 
and hence the centralizer is \(C=T^\sigma\times\langle\sigma\rangle=S^{(\sigma)}\), 
the quasitorus \(S^{(\sigma)}\) is maximal in \(\Aut\g\), 
so the \(\sigma\)-root decomposition of \(\g\) is actually a fine grading 
(i.e., does not admit proper refinements), and \(\frk{X}(S^{(\sigma)})=X\oplus\Z_q\)
is its universal group (see \cite[\S 1.2, \S 1.4]{Alb}). 
Moreover, the Weyl group of this fine grading is, by definition, \(N/C\), which is isomorphic to
\(W\rtimes\Gamma\) in the inner case and \(W^{(\sigma)}\cong W^{(\sigma)}_\mathrm{lat}\) 
in the outer case (cf. Theorem 4.3 in \cite{almostfinegrading}).
\end{remark}

First consider an inner grading, and let \((s_0,s_1,\ldots,s_\ell)\) be its
Kac coordinates, so that \(\sum_{i=0}^{\ell}n_i s_i=m\). Then \(\varrho\cl\Z\Phi\to\Z_m\) 
is defined by \(\varrho(\alpha_i)=\bar s_i\) for \(0\leq i\leq\ell\). 
%Since \(\alpha_0=-\sum_{i=1}^{\ell}n_i\alpha_i\), the relation \(\sum_{i=0}^{\ell}n_i s_i=m\) gives \(\varrho(\alpha_0)=\bar s_0\).
Let \(\Phi_{\g}:=\Phi\) in the nonexceptional cases and \(\Phi_{\g}:=\Phi_{\mathrm{sh}}\) 
in the exceptional cases (see \Cref{tab:exceptional-aut-groups}), where
\(\Phi_{\mathrm{sh}}\) denotes the set of short roots in \(\Phi\). 
Then the \(T\)-weight decomposition of \(\g\) is
\(
\g=\g_0\oplus\bigoplus_{\alpha\in\Phi_{\g}}\g_\alpha
\) 
where \(\g_0=\g\cap\frk{a}_0\) and \(\g_\alpha=\frk{a}_\alpha\) for \(\alpha\in\Phi_{\g}\).
Then the homogeneous component of degree \(\bar\jmath\in\mathbb Z_m\) is given by
\[
\g_{\bar\jmath}=\bigoplus_{\substack{\alpha=\sum_{i=1}^{\ell}m_i\alpha_i \in\Phi_{\g}\cup\{0\}\\
\sum_{i=1}^{\ell}m_i s_i\equiv j\;(\operatorname{mod}m)}}\g_\alpha.
\]
% In the nonexceptional cases this reduces to
% \[\g_{\bar\jmath}=\bigoplus_{\substack{\alpha=\sum_{i=1}^{\ell}m_i\alpha_i\in\Phi\cup\{0\}\\
% \sum_{i=1}^{\ell}m_i s_i\equiv j\;(\operatorname{mod}m)}}\g_\alpha,\]
% whereas in the exceptional cases the same formula holds with \(\Phi\) replaced by the set \(\Phi_{\mathrm{sh}}\) of short roots of \(G\).

Now consider an outer grading of type \(\sigma\). In the exceptional cases, the group \(G\) 
has type \(G_2\), \(F_4\), or \(C_\ell\), hence all gradings in these cases are inner. 
The same is true in the nonexceptional cases if \(\operatorname{char}K=2\).
So, in the outer case, \(\operatorname{char}K\ne 2\) and \(\g\) is the derived algebra of 
\(\frk{a}=\Lie(G)\). 
Recall the \(\sigma\)-root decomposition of \(\frk{a}\):
\[
\frk{a}=\bigoplus_{(\beta,\bar k)\in\Phi^{(\sigma)}\cup\{(0,\bar0)\}}
\mathfrak a_{(\beta,\bar k)}.
\]
Since \(\operatorname{char}K\ne 2\), we have \(Z(\tilde\g)=0\) except for types \(A_\ell\) 
if \(\operatorname{char}K\) divides \(\ell+1\) and \(E_6\) 
if \(\operatorname{char}K=3\), and in these cases \(\sigma\) has order \(2\) and acts as the 
negative identity on \(Z(\tilde\g)\). It follows that 
\(\g_\chi=\frk{a}_\chi\) except in the above cases when \(\chi=(0,\bar{1})\). 
Finally, let \((p_0,p_1,\ldots,p_{\ell(\sigma)})\) be the Kac coordinates.
Then \(\varrho\colon X \oplus \Z_q \to\mathbb Z_m\) is defined by 
\(\varrho(\beta_i)=\bar p_i\), for \(0\le i\le \ell(\sigma)\).
% The relation \(\sum_{i=0}^{\ell(\sigma)}n_i p_i=m\) shows that this assignment is compatible with the relation among the affine simple \(\sigma\)-roots.
Let \(\beta_0\) be the ``affine node''. Since we defined 
\(\beta_0=(-\delta_\sigma,\bar1) \in X \oplus \Z_q\), we have
\((\beta,\bar k)=k\beta_0+\left(\beta+k\delta_\sigma,\bar0\right)\) for all \(\beta\in X\).
Writing \(\beta+k\delta_\sigma=\sum_{i=1}^{\ell(\sigma)}m_i\beta_i\), 
we get \((\beta,\bar k)=k\beta_0+ \sum_{i=1}^{\ell(\sigma)}m_i\beta_i\), and hence
\(\varrho(\beta,\bar k)=k\bar p_0+\sum_{i=1}^{\ell(\sigma)}m_i\bar p_i\) in \(\mathbb Z_m\).
Hence the components of the \(\mathbb Z_m\)-grading on \(\g\) are given by 
\[
\g_{\bar{\jmath}}= \bigoplus_{\substack{(\beta,\bar k)\in\Phi^{(\sigma)}\cup\{(0,\bar0)\}\\
k p_0+\sum_{i=1}^{\ell(\sigma)}m_ip_i\equiv j (\op{mod} m)}}\mathfrak g_{(\beta,\bar k)}.
\]
This completes the description of the homogeneous components in terms of the inner and outer 
Kac coordinates.

\bibliographystyle{plain} % We choose the "plain" reference style
\bibliography{bib} % Entries are in the bib.bib file

\bigskip

\end{document}